\newcommand{\Aut}{\operatorname{Aut}}
\newcommand{\Gbb}{\mathbb{G}}
\newcommand{\Gal}{\operatorname{Gal}}
\newcommand{\Z}{\mathbb{Z}}
\newcommand{\Dcal}{\mathcal{D}}
\newcommand{\Ical}{\mathcal{I}}
\newcommand{\Qcal}{\mathcal{Q}}
\newcommand{\U}{\mathrm{U}}
\newcommand{\Stab}{\operatorname{Stab}}
\newcommand{\smin}{\smallsetminus}
\newcommand{\UU}{\mathbb{U}}
\newcommand{\K}{\operatorname{K}^\mathrm{M}}
\newcommand{\trdeg}{\operatorname{trdeg}}
\renewcommand{\H}{\operatorname{H}}
\newcommand{\hotimes}{\widehat\otimes}
\newcommand{\Char}{\operatorname{Char}}
\renewcommand{\bar}{\overline}
\newcommand{\Zcal}{\mathcal{Z}}
\newcommand{\Gcal}{\mathcal{G}}
\newcommand{\Hom}{\operatorname{Hom}}
\newcommand{\Ccal}{\mathcal{C}}
\newcommand{\Sub}{\operatorname{Sub}}
\newcommand{\Frob}{\operatorname{Frob}}
\newcommand{\Q}{\mathbb{Q}}
\newcommand{\Ocal}{\mathcal{O}}
\newcommand{\mfrak}{\mathfrak{m}}
\newcommand{\Idiv}{\mathcal{I}^\mathrm{div}}
\newcommand{\Frac}{\operatorname{Frac}}
\newcommand{\cont}{\mathrm{cont}}
\newcommand{\Isom}{\operatorname{Isom}}
\newcommand{\Spec}{\operatorname{Spec}}
\newcommand{\divv}{\operatorname{div}}
\newcommand{\Pbb}{\mathbb{P}}
\newtheorem{maintheorem}{Theorem}
\newtheorem*{maintheorem*}{Main Theorem}
\newtheorem*{maincorollary*}{Corollary}
\newtheorem*{theorem*}{Theorem}
\numberwithin{equation}{section}
\newtheorem{proposition}[equation]{Proposition}
\newtheorem{theorem}[equation]{Theorem}
\newtheorem{lemma}[equation]{Lemma}
\newtheorem{keylemma}[equation]{Key Lemma}
\newtheorem{fact}[equation]{Fact}
\newtheorem{corollary}[equation]{Corollary}
\newtheorem*{claim*}{Claim}
\theoremstyle{definition}
\newtheorem*{problem*}{Problem}
\theoremstyle{remark}
\newtheorem{remark}[equation]{Remark}
\title{Recovering function fields from their integral $\ell$-adic cohomology with the Galois action}
\author{Adam Topaz}
\date{\today}
\thanks{This research was partially supported by NSERC and the University of Alberta.}
\address{Mathematical and Statistical Sciences, University of Alberta, 632 Central Academic Building, Edmonton AB T6G 2G1, Canada.}
\email{topaz@ualberta.ca}
\urladdr{https://adamtopaz.com/}
\keywords{Birational anabelian geometry; Function fields; $\ell$-adic cohomology; Divisorial valuations}
\subjclass[2010]{12E, 12F, 12G, 12J}
\begin{document}

\begin{abstract}
  In this note, we consider function fields of higher-dimensional algebraic varieties defined over non-local fields, and show how the Galois action on the cohomology such function fields can be used to parameterize their divisorial valuations.
  By applying a recent theorem of {\sc Pop}~\cite{1810.04768}, we observe that, in dimension $\geq 3$, this information is enough to completely determine the function field and the base-field in question.
\end{abstract}

\maketitle
\tableofcontents

\section{Introduction}\label{sec:introduction}

Fix a prime $\ell$.
For a field $F$ of characteristic $\neq \ell$, $\H^i(F,\Z_\ell(j))$ will denote the (continuous cochain) Galois cohomology of $F$ with coefficients in $\Z_\ell(j)$ (the $j$-th cyclotomic twist of $\Z_\ell$), and $F^i$ denotes the perfect closure of $F$.
One goal of the present note is to deduce the following ``anabelian'' result using $\ell$-adic Galois cohomology of geometric function fields endowed with the action of an absolute Galois group of a ``sufficiently global'' base-field $k_0$.

\begin{maintheorem}\label{maintheorem:anabeliantheorem-reconstruct}
  Suppose that $k_0$ is a perfect field of characteristic $\neq \ell$ which is not real-closed nor Henselian with respect to any non-trivial valuation.
  Let $K_0$ be a regular function field of transcendence degree $\geq 3$ over $k_0$.
  Let $k$ denote the algebraic closure of $k_0$ and put $K := K_0 \cdot k$.
  Then the fields $K^i$, $K_0^i$, $k$ and $k_0$, as well as the obvious inclusions among them, can be reconstructed (uniquely up-to Frobenius twists) from the following data:
  \begin{enumerate}
  \item The absolute Galois group $\Gal(k|k_0)$ of $k_0$, considered as a mere profinite group.
  \item The $\Z_\ell$-module $\H^1(K,\Z_\ell(1))$, and the action of the profinite group $\Gal(k|k_0)$ on the set ${\H^1(K,\Z_\ell(1))}_{/\Z_\ell^\times}$.
  \item The subset $\{ (x,y) \ : \ x,y \in \H^1(K,\Z_\ell(1)), \ x \cup y = 0\}$ of ${\H^1(K,\Z_\ell(1))}^2$.
  \end{enumerate}
\end{maintheorem}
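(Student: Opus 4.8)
The plan is to pass from the cohomological package to the multiplicative structure of $K$ together with its divisorial valuation theory, to reconstruct $K^i$ and $k$ by means of the Bogomolov program, and then to descend to $K_0^i$ and $k_0$ by invoking Pop's theorem~\cite{1810.04768}. I would begin with Kummer theory: since $k$ is algebraically closed, $\mu_{\ell^\infty}\subset k\subset K$ and $k^\times$ is $\ell$-divisible, so for every $n$ there is a canonical $\Gal(k|k_0)$-equivariant isomorphism $\H^1(K,\mu_{\ell^n})\cong K^\times/(K^\times)^{\ell^n}\cong(K^\times/k^\times)\otimes\Z/\ell^n$. Passing to the limit identifies the $\Z_\ell$-module $\H^1(K,\Z_\ell(1))$ of item~(2) with the $\ell$-adic completion $\widehat{K^\times/k^\times}$ of the free abelian group $K^\times/k^\times$; under this identification the cup product $\H^1(K,\Z_\ell(1))^{\otimes 2}\to\H^2(K,\Z_\ell(2))$ becomes the $\ell$-adic symbol pairing, so the subset in item~(3) is precisely its zero locus, while the action in item~(2) is the projectivization of the natural action of the profinite group $\Gal(k|k_0)=\Gal(K|K_0)$ --- known abstractly via item~(1) --- on $\widehat{K^\times/k^\times}$ arising from the presentation $K=K_0\otimes_{k_0}k$.

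The second step is the geometric reconstruction. Because $\trdeg(K|k)\geq2$, the $\Z_\ell$-module $\widehat{K^\times/k^\times}$ together with the zero-locus datum of item~(3) is exactly the input required by the Bogomolov program for function fields (Bogomolov--Tschinkel, Pop): it determines the divisorial valuations of $K|k$ (via their inertia subgroups in $\widehat{K^\times/k^\times}$, which are singled out by the pairing as certain ``almost isotropic'' subgroups), each carrying its value homomorphism onto $\Z_\ell$ and its residue datum, and --- by recovering the rational subfields $k(f)\subseteq K$ and comparing them via the fundamental theorem of projective geometry --- the perfect closure $K^i$ as a field, uniquely up to Frobenius twists. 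One then recovers the constant subfield $k$ inside $K^i$ as the set of those $x\in(K^i)^\times$ with $v(x)=0$ for every divisorial $v$ (together with $0$), as well as the sublattice $K^\times/k^\times\subseteq\widehat{K^\times/k^\times}$. Since a field automorphism of $K^i$ is pinned down by its effect on $K^i$, the projectivized $\Gal(k|k_0)$-action of item~(2) has a unique lift to an action of $\Gal(k|k_0)$ on $K^i$ by field automorphisms; in particular one recovers the $\Gal(k|k_0)$-orbits on the divisorial valuations of $K|k$ --- which are the divisorial valuations of $K_0|k_0$ --- each equipped with its decomposition subgroup inside $\Gal(k|k_0)$. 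This is the parameterization of divisorial valuations announced in the abstract.

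It then remains to reconstruct $k_0\subseteq k$ and $K_0^i\subseteq K^i$. At the level of fields this is just Galois descent for the regular extension $K=K_0\cdot k$: one has $K_0=K^{\Gal(k|k_0)}$ and $k_0=k^{\Gal(k|k_0)}$, so the $\Gal(k|k_0)$-action on $K^i$ obtained in the previous step produces $K_0^i$, $k_0$ and all the required inclusions. The substantive content is that the available data really does determine this action together with the base field, and this is exactly Pop's theorem~\cite{1810.04768}: under the hypotheses that $k_0$ is perfect and is neither real-closed nor Henselian with respect to any non-trivial valuation, and that $\trdeg(K_0|k_0)\geq3$, the field $K$ with its $\Gal(k|k_0)$-action --- equivalently, the divisorial package produced above --- reconstructs $k_0$, $K_0^i$ and the inclusions, uniquely up to Frobenius twists. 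The transcendence-degree bound is one larger than in the plain Bogomolov program, reflecting the passage in~\cite{1810.04768} to residue fields of divisorial valuations of $K_0|k_0$, which must themselves still be function fields of transcendence degree $\geq2$ over $k_0$. Assembling the three steps reconstructs $K^i$, $K_0^i$, $k$, $k_0$ and their inclusions, uniquely up to Frobenius twists.

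The main obstacle, I expect, lies in Steps~1--2 and in dovetailing their output with~\cite{1810.04768}. The data in items~(2) and~(3) is deliberately lean --- only the vanishing-cup-product relation as a bare subset, and only the \emph{projectivized} Galois action --- so one must check that it recovers not merely the mod-$\ell$ Milnor $\K$-groups, or the abelian-by-central pro-$\ell$ Galois group, but the honest field $K^i$, the honest constant subfield $k$, the honest sublattice $K^\times/k^\times$, and an honest, non-projectivized $\Gal(k|k_0)$-action by field automorphisms. This forces one to work with the integral, $\Z_\ell$-coefficient form of the Bogomolov program and to control the scaling ambiguities that arise in passing from ${\H^1(K,\Z_\ell(1))}_{/\Z_\ell^\times}$ back to $\H^1(K,\Z_\ell(1))$ and then to $K^\times$ and $K^i$. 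Once $K^i$ with its divisorial valuations and its Galois action is in place, the reconstruction of $K_0^i$ and $k_0$ is a black-box application of Pop's theorem, whose hypotheses on $k_0$ are precisely the ones imposed in the statement.
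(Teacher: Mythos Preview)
Your proposal has a genuine gap, and it lies in the order and content of Steps~2 and~3.

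You claim that the vanishing-cup-product data alone ``determines the divisorial valuations of $K|k$'' and then reconstructs $K^i$ via the Bogomolov program. This is not known for a general algebraically closed base $k$: it is precisely the Bogomolov--Pop conjecture, which is open. What the local theory (Theorem~\ref{theorem:qd-local-theory}) actually produces from items~(2) and~(3) \emph{without} the Galois action is the set $\Qcal(K|k)$ of \emph{quasi}-divisorial valuations --- those $v$ with $vK/vk\cong\Z$ and no transcendence defect --- and for $k\neq\bar{\mathbb{F}}_p$ there are many such $v$ which are non-trivial on $k$, hence not divisorial. The whole point of the non-local hypothesis on $k_0$ is the Key Lemma~\ref{keylemma:galois-invariant-qdvs-are-dvs}: a quasi-divisorial valuation is divisorial if and only if its $\Gal(k|k_0)$-stabilizer is open. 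So the Galois action is used \emph{before} any reconstruction of $K^i$, not after.

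Relatedly, you have misidentified the content of Pop's theorem~\cite{1810.04768}. That result has nothing to do with $k_0$ or the non-local hypothesis; its input is the pro-$\ell$ abelian-by-central group $\Pi_K^c$ together with the subset $\Idiv_K\subset\Pi_K^a$ of divisorial inertia elements, and its output is $K^i|k$ for an \emph{arbitrary} algebraically closed $k$ (in transcendence degree $\geq 3$). The paper's flow is therefore: (i)~build $\Gcal(K,\Z_\ell)$ and alternating pairs from item~(3); (ii)~parameterize $\Qcal(K|k)$; (iii)~use the $\Gal(k|k_0)$-action from item~(2) and the non-local hypothesis to cut out $\Dcal(K|k)\subset\Qcal(K|k)$; (iv)~build $\Pi_K^c$ from the cup-product kernel (Lemma~\ref{lemma:cohom-to-galois-reconstruct}) and $\Idiv_K$ from step~(iii); (v)~apply Pop to get $K^i|k$; (vi)~only then take $\Gal(k|k_0)$-invariants to obtain $K_0^i$ and $k_0$. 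Your proposal collapses steps~(ii)--(v) into a single appeal to an unproven conjecture and then attributes the non-local hypothesis to the wrong step.
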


Aside from the above theorem which shows that a function field can be ``reconstructed'' (meaning that there exists some \emph{algorithm} which starts with the given data and returns the fields in question as sets with addition and multiplication), we also prove a statement which is functorial with respect to isomorphisms of the data that appears there.
This is the content of the following theorem.

\begin{maintheorem}\label{maintheorem:anabeliantheorem-functorial}
  In the context of Theorem~\ref{maintheorem:anabeliantheorem-reconstruct}, suppose furthermore that $l_0$ is another field of characteristic $\neq \ell$ which is not real-closed nor Henselian with respect to any non-trivial valuation, and that $L_0|l_0$ is a regular function field of any transcendence degree.
  Let $l$ denote the algebraic closure of $l_0$ and put $L := L_0 \cdot l$.
  Suppose that
  \[ \phi : \H^1(K,\Z_\ell(1)) \cong \H^1(L,\Z_\ell(1)) \]
  is an isomorphism of complete $\Z_\ell$-modules and $\eta : \Gal(k|k_0) \cong \Gal(l|l_0)$ is an isomorphism of profinite groups, such that the following conditions hold:
  \begin{enumerate}
  \item The isomorphism $\phi$ is compatible with vanishing cup-products.
    Namely, for all elements $x,y \in \H^1(K,\Z_\ell(1))$, one has $x \cup y = 0$ in $\H^2(K,\Z_\ell(2))$ if and only if one has $\phi(x) \cup \phi(y) = 0$ in $\H^2(L,\Z_\ell(2))$.
  \item The isomorphism $\phi$ is compatible with the Galois action modulo $\Z_\ell^\times$.
    Namely, the induced bijection
    \[ \phi_{/\Z_\ell^\times} : {\H^1(K,\Z_\ell(1))}_{/\Z_\ell^\times} \cong {\H^1(L,\Z_\ell(1))}_{/\Z_\ell^\times} \]
    is equivariant with respect to the action of $\Gal(k|k_0)$ resp. $\Gal(l|l_0)$ via $\eta$.
  \end{enumerate}
  Then there exists an isomorphism of fields $\psi : K^i \cong L^i$ (unique up-to Frobenius twists) which restricts to an isomorphism $K_0^i \cong L_0^i$, and a (unique) $\epsilon \in \Z_\ell^\times$, such that $\epsilon \cdot \phi$ is the isomorphism induced by $\psi$, and such that $\eta$ is the isomorphism induced by $\psi$ via the identifications of Galois groups $\Gal(k|k_0) = \Gal(K^i|K_0^i)$ resp. $\Gal(l|l_0) = \Gal(L^i|L_0^i)$.
\end{maintheorem}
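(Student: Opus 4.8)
The plan is to recast the data Kummer-theoretically, reconstruct the divisorial valuations of $K|k$ from it, and then invoke Pop's theorem~\cite{1810.04768}. First I would use the Kummer sequence to identify $\H^1(K,\Z_\ell(1))$ with the $\ell$-adic completion $\widehat{K^\times}=\varprojlim_n K^\times/(K^\times)^{\ell^n}$, in such a way that two classes have vanishing cup product if and only if the corresponding elements have vanishing symbol in completed Milnor $\K$-theory; likewise for $L$. Since $K_0|k_0$ is regular, $K|K_0$ is Galois with group $\Gal(k|k_0)$, so the action appearing in item~(2) is the evident one on ${\widehat{K^\times}}_{/\Z_\ell^\times}$. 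Under these identifications the hypotheses on $\phi$ say precisely that $\phi\colon\widehat{K^\times}\cong\widehat{L^\times}$ is an isomorphism of complete $\Z_\ell$-modules that preserves vanishing symbols and is $\eta$-equivariant modulo $\Z_\ell^\times$; in particular $\phi$ preserves transcendence degree, so $\trdeg(L|l)=\trdeg(K|k)\ge 3$ and Pop's theorem is available for both sides.

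The heart of the matter is to reconstruct the divisorial valuations from this data. For this I would invoke the integral ($\Z_\ell$-coefficient) version of the abelian-by-central / commuting-liftable local theory to characterise, purely in terms of the vanishing-symbol relation on $\widehat{K^\times}$, the submodules arising as inertia and decomposition subgroups of valuations of $K$; the ``sufficiently global'' hypotheses on $k_0$ (not real-closed, and not Henselian for any nontrivial valuation) together with the $\Gal(k|k_0)$-action on ${\widehat{K^\times}}_{/\Z_\ell^\times}$ then serve to isolate, among these, exactly the divisorial valuations of $K|k$, and to recover at the same time the constant submodule $\widehat{k^\times}$ and the submodule attached to $K_0$. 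Since each of these characterisations is expressed entirely in the data of items~(1)--(3), the pair $(\phi,\eta)$ automatically carries divisorial valuations of $K|k$ to divisorial valuations of $L|l$, compatibly with their inertia, decomposition and residue data and with the constant and $K_0$-submodules; that is, $\phi$ is an isomorphism of ``function fields equipped with their geometric set of divisorial valuations'' of the kind that Pop's theorem takes as input.

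Applying the functorial form of Pop's theorem~\cite{1810.04768} to $\phi$ then produces, since $\trdeg(K|k)\ge 3$, a field isomorphism $\psi\colon K^i\cong L^i$ --- unique up to Frobenius twists --- which recovers $k$ as the constants, restricts to an isomorphism $K_0^i\cong L_0^i$ of the fixed fields of the Galois actions, induces $\eta$ under the identifications $\Gal(k|k_0)=\Gal(K^i|K_0^i)$ and $\Gal(l|l_0)=\Gal(L^i|L_0^i)$, and induces $\phi$ modulo $\Z_\ell^\times$ on $\widehat{K^\times}=\H^1(K,\Z_\ell(1))$; restricting further to constants gives $k_0\cong l_0$. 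It remains only to pin down the scalar: $\phi^{-1}\circ\psi_*$ is a $\Z_\ell$-linear automorphism of $\widehat{K^\times}$ that is the identity modulo $\Z_\ell^\times$, and evaluating it on (the images of) two elements of a transcendence basis, which stay $\Z_\ell$-independent in the completion, forces it to be multiplication by a single unit; hence $\epsilon\cdot\phi=\psi_*$ for a unique $\epsilon\in\Z_\ell^\times$. Uniqueness of $\psi$ up to Frobenius twists, and of $\epsilon$, follows from the corresponding uniqueness in Pop's theorem and the faithfulness of the $\Z_\ell^\times$-action.

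The step I expect to be the main obstacle is the reconstruction of the divisorial valuations: upgrading the mod-$\ell$ local theory to $\Z_\ell$-coefficients, and --- more delicately --- combining it with the arithmetic provided by $\Gal(k|k_0)$ and by the hypotheses that $k_0$ is neither real-closed nor Henselian, so as to single out precisely the divisorial valuations of $K|k$ from among the many valuations of $K$ that are visible in the cup-product structure. Once that is in place, the remaining steps amount to quoting Pop's theorem functorially together with a short linear-algebra argument.
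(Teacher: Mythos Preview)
Your overall strategy---local theory to parameterise quasi-divisorial valuations from the cup-product data, then the Key Lemma (open Galois stabiliser $\Leftrightarrow$ divisorial) to single out $\Dcal(K|k)$, then Pop's theorem---is exactly the paper's. But there is one genuine omission and two small confusions.

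\textbf{The omission.} Pop's theorem (as stated in the paper, Theorem~\ref{theorem:Pop-recover-functionfields-from-divisorial-inertia}) does not take ``a function field equipped with its divisorial valuations'' as input. Its functorial form is a bijection
\[
{\Isom^i(K,L)}_{/\langle\Frob\rangle}\;\longrightarrow\;{\Isom^c_{\divv}(\Pi_L^a,\Pi_K^a)}_{/\Z_\ell^\times},
\]
so you must show that the dual of $\phi$ not only carries $\Idiv_L$ to $\Idiv_K$ but also \emph{lifts to an isomorphism $\Pi_L^c\cong\Pi_K^c$}. You use the cup-product hypothesis only once, for the local theory; the paper uses it a second time, via Lemma~\ref{lemma:cohom-to-galois-functorial}: the vanishing-cup-product set generates the kernel of the inflation map $\H^2(\Pi_K^a,\Z_\ell)\to\H^2(\Pi_K,\Z_\ell)$ (Merkurjev--Suslin), and preserving this kernel is exactly the condition that the isomorphism of $\Pi^a$'s lifts to $\Pi^c$'s. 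Without this step you have not produced an element of $\Isom^c_{\divv}$, and Pop's theorem does not apply.

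\textbf{Minor confusions.} The ``constant submodule $\widehat{k^\times}$'' is zero: $k$ is algebraically closed of characteristic $\neq\ell$, so $k^\times$ is $\ell$-divisible. And there is no distinguished ``submodule attached to $K_0$'' inside $\widehat{K^\times}$ that you recover directly; rather, once Pop's theorem hands you $K^i$ together with the $\Gamma$-action (step~(8) in the paper's proof of Theorem~\ref{maintheorem:anabeliantheorem-reconstruct}), you obtain $K_0^i$ as the fixed field $(K^i)^\Gamma$ and $k_0$ as $k^\Gamma$. Your argument for the scalar $\epsilon$ is fine.
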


\begin{remark}
  The above theorems are perhaps best understood within the context of Bogomolov's program in birational anabelian geometry~\cite{MR1260938}, and the related Bogomolov-Pop conjecture~\cites{MR2421544,MR2891876}.
  In fact, the Bogomolov-Pop conjecture (with $\Z_\ell$-coefficients, cf.~\cite{MR3552242}) is equivalent to the assertion that the above two theorems hold true for an \emph{arbitrary} perfect field $k_0$ (resp. $l_0$).
  At this point, the Bogomolov-Pop conjecture is wide open in this level of generality; see~\cites{MR2421544,MR2859233,MR2891876} for the known cases.
Nevertheless, it seems reasonable to think that Theorems~\ref{maintheorem:anabeliantheorem-reconstruct} and~\ref{maintheorem:anabeliantheorem-functorial} would be sufficient for applications of this conjecture in most situations of interest in arithmetic geometry.
\end{remark}

\subsection{Strategy and Outline}
The technical core of this note studies the Galois action on the so-called \emph{quasi-prime divisors} of a given function field $K|k$, via their manifestation in cohomology.
Because of this, this paper includes a detailed overview of the \emph{local theory} from almost-abelian anabelian geometry, highlighting how it relates to the present work.

In the context of the theorems above, we will eventually be able to use the Galois action on such quasi-prime divisors to identify the \emph{divisorial valuations} of $K|k$ among all its quasi-divisorial valuations.
Along with some (now standard) cohomological calculations, this allows us to reduce the above two theorems to a recent result of {\sc Pop}~\cite{1810.04768}.
This is also precisely the reason why we must restrict to dimension $\geq 3$ (as this assumption appears in \emph{loc.\ cit.}), whereas we expect the above results to hold true in dimension $2$ as well.
Our parameterization of divisorial valuations using the Galois action holds true in arbitrary dimension $\geq 2$.

\subsection{Notation and Terminology}\label{subsection:notation-and-terminology}

Throughout the note $\ell$ will be a fixed prime, and $\Lambda$ will denote a quotient of $\Z_\ell$.
Given a $\Lambda$-module $M$, we write $\Sub(M)$ for the collection of $\Lambda$-submodules of $M$.
We use the notation $\hotimes$ to the denote the $\ell$-adically completed tensor product.
Hence, for a discrete abelian group $A$, one has $A \hotimes \Z/\ell^n = A/(\ell^n \cdot A)$ while $A \hotimes \Z_\ell$ denotes the $\ell$-adic completion of $A$.

For a field $F$ of characteristic $\neq \ell$, we will write $\Lambda(j)$ for the $j$-th cyclotomic twist of $\Lambda$ and consider the continuous cochain Galois cohomology groups $\H^i(F,\Lambda(j))$.
We have a canonical \emph{Kummer map}
\[ F^\times \rightarrow \H^1(F,\Lambda(1)), \]
which extends to a well-defined morphism of graded-commutative $\Lambda$-algebras
\begin{equation}\label{equation:norm-residue-morphism}
  \K_*(F) \hotimes \Lambda \rightarrow \H^*(F,\Lambda(*)).
\end{equation}
Here $\K_*(F)$ denotes the \emph{Milnor K-ring of $F$}, and $\H^*(F,\Lambda(*)) := \bigoplus_{n \geq 0} \H^n(F,\Lambda(n))$ is the usual cohomology ring of $F$ considered as a graded-commutative $\Lambda$-algebra.
The Bloch-Kato conjecture, which is now a highly celebrated theorem of {\sc Voevodsky-Rost} et al.~\cites{MR2811603,MR2597737,Rost1998} asserts that the map (\ref{equation:norm-residue-morphism}) above is an isomorphism of graded-commutative $\Lambda$-algebras.
This also implies that the cohomology groups considered above agree with the continuous \'etale cohomology groups in the sense of {\sc Jannsen} \cite{MR929536}.

Given a valued field $(F,v)$ such that $\Char(F) \neq \ell$, we will write $\Ocal_v$ for the valuation ring of $v$ with maximal ideal $\mfrak_v$, $\U_v$ for the $v$-units, and $\U_v^1 = 1 + \mfrak_v$ for the principal $v$-units.
Whenever $\Lambda$ as above is fixed, we will write $\UU_v$ for the (closure of the) image of the canonical map
\[ \U_v \hotimes \Lambda \rightarrow F^\times \hotimes \Lambda \xrightarrow{\text{Kummer}} \H^1(F,\Lambda(1)). \]
We will also write
\[ \partial_v : \H^1(F,\Lambda(1)) \rightarrow \Gamma_v \hotimes \Lambda \]
for the unique morphism through which the following map factors via the Kummer map:
\[ F^\times \xrightarrow{v} \Gamma_v \rightarrow \Gamma_v \hotimes \Lambda. \]
Note that $\UU_v$ is the kernel of $\partial_v$.
While the notation $\UU_v$ implicitly depends on the choice of $\Lambda$, this will always be clear from context and so should not cause any confusion.

Let $k_0$ be a perfect field with algebraic closure $k$.
Given a function field $K|k$, we say that $K|k$ is \emph{defined over $k_0$} provided that there exists a \emph{regular} function field $K_0|k_0$ such that $K = K_0 \cdot k$.
If we wish to specify the $K_0$ whose base-change is $K$ in the above sense, we will furthermore say that $K|k$ is \emph{defined over $k_0$ by $K_0$}.
Note that in this case we may identify $\Gal(k|k_0)$ with $\Gal(K|K_0) = \Gal(K^i|K_0^i)$ in the usual way.

Suppose that $K|k$ is defined over a perfect field $k_0$.
Then the $\Lambda$-algebras $\K_*(K)$ and $\H^*(K,\Lambda(*))$ obtain a canonical $\Lambda$-linear action of $\Gal(k|k_0)$, and the isomorphism
\[ \K_*(K) \hotimes \Lambda \cong \H^*(K,\Lambda(*)) \]
discussed above is $\Gal(k|k_0)$-equivariant.

We will say that a field $k_0$ is \emph{non-local} provided that $k_0$ is not real-closed and that $k_0$ has no non-trivial Henselian valuations.
Such non-local fields $k_0$ are ubiquitous throughout arithmetic geometry.
In fact, it seems that any field which is not obviously ``local'' in some way turns out to be non-local.
Here are some examples which are of particular interest, all of which are well-known to be non-local in the above sense.

\begin{fact}
The following fields are all non-local in the above sense.
\begin{enumerate}
\item Any algebraic extension of a finite field.
\item Any finitely-generated field.
\item The function field of any positive dimensional integral variety over any field.
\item Any purely inseparable extension of another non-local field.
\end{enumerate}
\end{fact}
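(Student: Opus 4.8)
The plan is to reduce each of the four items to standard facts about valuations and absolute Galois groups. Note first that a finitely generated field is either finite --- hence algebraic over a finite field --- or a number field, or a finitely generated field of positive transcendence degree over its prime field; since the last is the function field of a positive-dimensional variety over its prime field, item (2) follows from items (1) and (3) together with the case of number fields. For item (1): if $F$ is algebraic over a finite field then $F^\times$ is torsion, so the image of any valuation of $F$ is a torsion subgroup of a torsion-free group and hence trivial; thus $F$ carries no non-trivial valuation at all, and it is not real-closed since $\Char F \ne 0$. For item (4): in characteristic $0$ a purely inseparable extension is trivial, so assume $\Char K_0 = p > 0$; then $K$ has characteristic $p$ and is not real-closed, and if $(K,v)$ were Henselian with $v$ non-trivial, then $v$ would restrict to a non-trivial valuation on $K_0$ (for $y \in K^\times$ with $v(y) \ne 0$, some power $y^{p^n}$ lies in $K_0$, with value $p^n v(y) \ne 0$), and its Henselization $K_0^h$ would embed into the Henselian field $K$ over $K_0$ by the universal property; since $K_0^h/K_0$ is separable while $K/K_0$ is purely inseparable, this forces $K_0^h = K_0$, so $(K_0, v|_{K_0})$ is a non-trivial Henselian valuation of $K_0$, contradicting that $K_0$ is non-local.

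For the non-real-closedness in items (2) and (3): by the Artin--Schreier theorem a real-closed field has absolute Galois group $\Z/2\Z$, so it suffices to see that the remaining fields have infinite absolute Galois group. Those of positive characteristic are handled by the characteristic; in characteristic $0$, each such field $K$ is a finite separable extension of a field $E$ --- a rational function field of positive transcendence degree, or $\Q$ in the number-field case --- that admits separable extensions of arbitrarily large degree, so $\Gal(\bar E|E)$ is infinite, and hence so is its open subgroup $\Gal(\bar K|K)$.

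The remaining point is that the fields in items (2) and (3) have no non-trivial Henselian valuation. Suppose $(K,v)$ is Henselian with $v$ non-trivial. If $K$ is a number field, then $v|_\Q$ is a $p$-adic valuation, its Henselization $\Q^h$ (the field of algebraic $p$-adic numbers) embeds into $K$ over $\Q$, and $[\Q^h:\Q] = \infty$ while $[K:\Q] < \infty$: contradiction. Otherwise $K$ is a function field of transcendence degree $d \ge 1$ over some field $k$; pick an intermediate field $k \subseteq F \subsetneq K$ with $\trdeg(K|F) = 1$ (e.g.\ $F = k(t_1,\dots,t_{d-1})$ for part of a transcendence basis, with $F = k$ if $d = 1$) and an element $t \in K$ transcendental over $F$, so that $K/F(t)$ is finite. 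Then $v|_{F(t)}$ is non-trivial, since otherwise $v$ would be trivial on its algebraic extension $K$. The key claim is that the Henselization $L := (F(t), v|_{F(t)})^h$ is infinite over $F(t)$: a short case analysis --- on $v(t)$, on $v|_F$, and on the values $v(t-c)$ for $c \in F$ --- produces a non-constant $g \in F(t)$ with $v(g) > 0$; for every prime $p$ distinct from the residue characteristic the polynomial $X^p - (1+g)$ then has a root in $L$ by Hensel's lemma, since $1 + g \equiv 1$ modulo $\mfrak_v$; and since $1+g$ is non-constant, its divisor on $\Pbb^1_F$ is non-zero, hence divisible by only finitely many primes, so $X^p - (1+g)$ is irreducible over $F(t)$ --- giving a degree-$p$ subextension of $L$ --- for all but finitely many $p$. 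Since $L$ also embeds into the Henselian field $K$ over $F(t)$, this contradicts $[K:F(t)] < \infty$.

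The main obstacle is precisely this last claim over an \emph{arbitrary} base field $F$: one must extract a non-constant element of positive value from an arbitrary non-trivial valuation of $F(t)$, and bound $[F(t)(\sqrt[p]{1+g}):F(t)]$ from below uniformly in $p$ (where the structure of $\mathrm{Div}(\Pbb^1_F)$ and the irreducibility criterion for $X^p - a$ do the work). The surrounding ingredients --- the universal property and separability of Henselizations, and the Artin--Schreier description of real-closed fields --- are entirely standard.
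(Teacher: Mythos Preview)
The paper states this Fact without proof, treating it as well known, so there is no argument in the text to compare against. Your proof is correct and essentially complete. The only step left as a sketch is the case analysis producing a non-constant $g\in F(t)$ with $v(g)>0$: the cases you list (on $v(t)$, on $v|_F$, on $v(t-c)$) do not literally cover the situation where $v|_F$ is trivial, $v(t)=0$, and the center of $v|_{F(t)}$ on $\Pbb^1_F$ is a non-rational closed point, but then the minimal polynomial of that point furnishes $g$; more directly, triviality of $v|_F$ together with non-triviality of $v|_{F(t)}$ forces some non-constant element to have nonzero value, and one takes that element or its inverse. The remaining ingredients --- Hensel's lemma applied to $X^p-(1+g)$ for $p$ prime to the residue characteristic, the divisor argument on $\Pbb^1_F$ showing $1+g\notin F(t)^p$ for all but finitely many primes $p$, and the universal property and separability of Henselizations --- are standard and correctly invoked.
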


\subsection*{Acknowledgements}
It is a pleasure to thank all who expressed interest in this work, and especially A. Cadoret and F. Pop.
The author warmly thanks T. Szamuely for his comments and suggestions regarding the related manuscript~\cite{TopazTorelli}, which eventually led to the developments in the present paper.

\section{Review of the local theory}\label{section:review-local-theory}

In this section, we review the so-called \emph{local theory} which allows us to detect valuations using cohomology.
The content in this section is not new, but is rather a distillation of the work done in \cites{MR1260938,MR1977585,MR3692015,MR3552293,TopazTorelli}.
For readers' sake, we aim to be as self-contained as possible in this section (see also the remarks concerning the local theory in \S\ref{section:concluding-remarks}).

\subsection{Basic Notation}
We let $\Lambda$ be a quotient of $\Z_\ell$, as above.
To greatly simplify the discussion, we will assume throughout this section that $\Lambda$ is a \emph{domain}, so that $\Lambda = \Z_\ell$ or $\Z/\ell$ (see~\cites{MR3692015,MR3552293} for the general case, which is much more technical).

For a field $F$, we write
\[ \Gcal(F,\Lambda) := \Hom(F^\times,\Lambda) \otimes_\Lambda \Frac\Lambda, \]
which we consider as a vector-space over $\Frac\Lambda$.
Note that the elements of $\Gcal(F,\Lambda)$ can be considered as homomorphisms $F^\times \rightarrow \Frac\Lambda$.
Note furthermore that one has
\[ \Hom(F^\times,\Lambda) = \Hom^{\cont}_\Lambda(F^\times \hotimes \Lambda,\Lambda) \]
and so, if $\Char F \neq \ell$, then $\Gcal(F,\Lambda)$ can be identified with
\[ \Hom^{\cont}_\Lambda(\H^1(F,\Lambda(1)),\Lambda) \otimes_\Lambda \Frac\Lambda \]
using Kummer theory.
In this case, we can also consider the elements of $\Gcal(F,\Lambda)$ as homomorphisms $\H^1(F,\Lambda(1)) \rightarrow \Frac\Lambda$.

We will also write $\Gcal^\pm(F,\Lambda)$ for the subspace of all elements $f \in \Gcal(F,\Lambda)$ such that $f(-1) = 0$.
Of course, since $\Lambda$ is assumed to be a domain, we have $\Gcal^\pm(F,\Lambda) = \Gcal(F,\Lambda)$ unless $\Lambda = \Z/2$, and, if $F$ contains $\mu_4$, then one always has $\Gcal^\pm(F,\Lambda) = \Gcal(F,\Lambda)$.

Given a valuation $v$ of $F$, we will write
\[ \Dcal_v = \{f \in \Gcal(F,\Lambda) \ : \ \forall x \in \U_v^1, \ f(x) = 0 \} \]
and similarly
\[ \Ical_v = \{f \in \Gcal(F,\Lambda) \ : \ \forall x \in \U_v, \ f(x) = 0 \} \]
Both are considered as subspaces of $\Gcal(F,\Lambda)$, with $\Ical_v \subset \Dcal_v$.
Note that one has a canonical restriction map $\Dcal_v \rightarrow \Gcal(Fv,\Lambda)$, whose kernel is $\Ical_v$.
Note also that $\Ical_v \subset \Gcal^\pm(F,\Lambda)$.

\subsection{Valuative Elements and Sets}\label{subsection:comparing-valuations}

Let $F$ be a field and let $\Sigma$ be a subset of $\Gcal(F,\Lambda)$.
We say that $\Sigma$ is \emph{valuative} provided that there exists some valuation $v$ of $F$ such that $\Sigma \subset \Ical_v$.
We say that an element $f \in \Gcal(F,\Lambda)$ is valuative provided that $\{f\}$ is valuative.

\begin{fact}\label{fact:valuative-morphisms}
  Let $\Sigma$ be a valuative subset of $\Gcal(F,\Lambda)$.
  Then there exists a unique coarsest valuation $v_\Sigma =: v$ of $F$ such that $\Sigma \subset \Ical_v$.
  Moreover, if $w$ is any valuation of $F$ such that $\Sigma \subset \Ical_w$, then $v_\Sigma$ is the coarsening of $w$ associated to the maximal convex subgroup of
  \[ w(\{x \in F^\times \ : \ \forall f \in \Sigma, \ f(x) = 0 \}). \]
\end{fact}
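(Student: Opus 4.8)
The plan is to work with the set
\[ S := \{x \in F^\times \ : \ \forall f \in \Sigma, \ f(x) = 0\}, \]
which is a subgroup of $F^\times$, and to extract $v_\Sigma$ from the valuation-theoretic behaviour of $S$. First I would fix an auxiliary valuation $w$ with $\Sigma \subset \Ical_w$ (one exists since $\Sigma$ is valuative). Note that $\U_w \subset S$ by definition of $\Ical_w$, so $S$ is a subgroup of $F^\times$ containing the units of $w$; hence $w(S)$ is a subgroup of $\Gamma_w$ with the property that $w^{-1}(w(S)\text{-stuff})$ interacts well with the convex subgroup structure of $\Gamma_w$. The key observation is that if $\Delta \subseteq \Gamma_w$ is the maximal convex subgroup contained in $w(S)$, and $v$ is the coarsening of $w$ corresponding to $\Delta$ (so $\Ocal_v = \{x : w(x) \geq \delta \text{ for some } \delta \in \Delta\} \cup \{0\}$, with value group $\Gamma_w/\Delta$), then I claim $\Sigma \subset \Ical_v$ and $v$ is the coarsest such valuation — and, crucially, this $v$ does not depend on the choice of $w$, which is what gives uniqueness and the displayed formula simultaneously.

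The argument has three steps. \emph{Step 1: $\Sigma \subset \Ical_v$.} A unit of $v$ is an element $x$ with $w(x) \in \Delta$; since $\Delta \subseteq w(S)$, after multiplying by a unit of $w$ we may assume $x \in S$, and then $f(x) = 0$ for all $f \in \Sigma$ by definition of $S$. So every $f \in \Sigma$ kills $\U_v$, i.e. $f \in \Ical_v$. \emph{Step 2: $v$ is a coarsening of any $w'$ with $\Sigma \subset \Ical_{w'}$.} Here I would use the standard dictionary between coarsenings and convex subgroups together with the observation that $\Ical_{w'} \supseteq \Sigma$ forces $\U_{w'} \subseteq S$, hence $w'(S) = \Gamma_{w'}$ — wait, that is too strong; rather, one uses that the valuations $w'$ with $\Sigma \subseteq \Ical_{w'}$ are precisely those whose unit group lies in $S$, equivalently (by the fundamental comparison lemma for valuations, cf.~\cites{MR1977585,TopazTorelli}) those that are coarsenings of the finest such valuation, which is exactly the $v$ built above from $w$. \emph{Step 3: independence of $w$ and the formula.} Once Steps 1--2 are in place, $v_\Sigma := v$ is visibly the coarsest valuation with $\Sigma \subset \Ical_{v_\Sigma}$, it is unique, and for any $w$ with $\Sigma \subset \Ical_w$ it is by construction the coarsening of $w$ associated to the maximal convex subgroup of $w(S)$ — which is the displayed assertion.

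The main obstacle is Step 2: turning ``$\Sigma \subseteq \Ical_{w'}$'' into ``$w'$ is a coarsening of $v$'' cleanly. The cheap direction (coarsenings of $v$ satisfy the condition, since $\U_{w'} \subseteq \U_v$ when $w'$ coarsens $v$) is immediate; the substantive direction requires knowing that the family of valuations $\{w' : \U_{w'} \subseteq S\}$ is totally ordered by coarsening with a finest element. This is where I would invoke the structure theory of valuations on a fixed field — specifically that the valuation rings of $F$ containing a given subring form a chain under inclusion, applied to the subring generated by $S \cup \Ocal_w$ — rather than reprove it. Everything else (Steps 1 and 3) is a direct unwinding of the definitions of $\Ical_v$, $\Dcal_v$, and convex-subgroup coarsenings, and the fact that $\Ical_v \subset \Gcal^\pm(F,\Lambda)$ noted just before the statement ensures there is no subtlety with $-1$ in the $\Lambda = \Z/2$ case.
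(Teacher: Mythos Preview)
Your setup and Step~1 are fine and match the paper. The gap is in Step~2, where both structural facts you invoke are false. The family $\{w' : \U_{w'} \subset S\}$ is \emph{not} totally ordered: take $\Sigma = \emptyset$, so $S = F^\times$, and every valuation of $F$ lies in the family. Likewise, ``the valuation rings of $F$ containing a given subring form a chain'' is wrong --- consider $\Q[x,y] \subset \Q(x,y)$ and the DVRs at distinct height-one primes. (What \emph{is} true is that the overrings of a fixed \emph{valuation} ring form a chain, but the subring generated by $S \cup \Ocal_w$ has no reason to be a valuation ring.) You also describe the admissible $w'$ as ``coarsenings of the finest such valuation, which is exactly the $v$ built above,'' but $v$ is meant to be the \emph{coarsest} member of the family, not the finest, and $w$ itself is already a strictly finer member.

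The paper avoids all of this by proving an intrinsic formula for $\U_v$ in terms of $H := S$ alone:
\[ \U_v = \{ x \in H \ : \ \forall y \in F^\times \smin H, \ (1+y) \in (x+y) \cdot H \}. \]
The inclusion $\subseteq$ is the ultrametric inequality; for $\supseteq$ one argues by contradiction, using maximality of the chosen convex subgroup to produce, whenever $v(x) > 0$, some $y \notin H$ with $0 < v(y) < v(x)$, and then checking that $(1+y) \notin (x+y) \cdot H$. Since the right-hand side of the displayed formula depends only on $H$ and not on $w$, the valuation $v$ is independent of the auxiliary choice; applying the same construction to an arbitrary $w'$ with $\Sigma \subset \Ical_{w'}$ then exhibits $v$ as a coarsening of $w'$, which gives coarsest-ness, uniqueness, and the formula in the statement all at once. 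This single computation replaces your Step~2 and renders Step~3 automatic.
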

\begin{proof}
  Put
  \[ H := \{x \in F^\times \ : \ \forall f \in \Sigma, \ f(x) = 0 \}. \]
  Let $w$ be any valuation of $F$ such that $\Sigma \subset \Ical_w$, and let $v$  be the coarsening associated to the maximal convex subgroup of $w(H)$.
  We will show that $v$ doesn't depend on the choice of the original valuation $w$.
  In fact, it turns out that one has
  \[ \U_v = \{ x \in H \ : \ \forall y \in F^\times \smin H, \ (1+y) \in (x+y) \cdot H \}. \]

  The inclusion $\subseteq$ follows from the ultrametric inequality.
  Conversely, suppose that $x$ is contained in the set on the right.
  If $v(x) > 0$, then there exists some $y \in F^\times \smin H$ such that $0 < v(y) < v(x)$, for otherwise $v(H)$ would contain the interval $[0,v(x)]$, hence it would contain the convex subgroup generated by $v(x)$, thereby contradicting the defining property of $v$.
  But then $v(x+y) = v(y)$ while $1+y \in \U_v^1 \subset \U_v \subset H$.
  By definition, we have $1+y \in (x+y) \cdot H$ while $1+y \in H$, hence $(x+y) \in H$ and $y \in H$, since $(x+y) \cdot y^{-1} \in \U_v \subset H$.
  This is a contradiction, and the case $v(x) < 0$ provides a similar contradiction.
  In other words, it must be the case that $v(x) = 0$.
\end{proof}

\subsection{Alternating pairs}\label{subsection:alternating-pairs}

A pair of elements $f,g \in \Gcal(F,\Lambda)$ is called \emph{alternating} provided that for all $x \in F \smin \{0,1\}$, one has
\[ f(x) \cdot g(1-x) = f(1-x) \cdot g(x). \]
We say that a set $\Sigma \subset \Gcal(F,\Lambda)$ is alternating if it is pairwise alternating.
By the definition of Milnor K-theory, we see that $f,g$ form an alternating pair if and only if for all $x,y \in F^\times \hotimes \Lambda$ such that $\{x,y\} = 0$ in $\K_2(F) \hotimes \Lambda$, one has
\[ f(x) \cdot g(y) = f(y) \cdot g(x). \]
If $\Char F \neq \ell$, then we may identify $F^\times \hotimes \Lambda$ with $\H^1(F,\Lambda(1))$ via Kummer theory, and the Merkurjev-Suslin theorem~\cite{MR675529} shows that the alternating-pair condition can be alternatively tested on elements $x,y \in \H^1(F,\Lambda(1))$ whose cup-product is trivial in $\H^2(F,\Lambda(2))$.
We record this observation for later use in the following fact.

\begin{fact}\label{fact:cup-product-gives-alternating-pairs}
  Let $F$ be a field of characteristic $\neq \ell$.
  Suppose that $f,g \in \Gcal(F,\Lambda)$ are given, and consider them as morphisms
  \[ f,g : \H^1(F,\Lambda(1)) \rightarrow \Frac\Lambda \]
  via Kummer theory.
  Then $f,g$ form an alternating pair if and only if for all pairs of elements $x,y \in \H^1(F,\Lambda(1))$ such that $x \cup y = 0$ in $\H^2(F,\Lambda(2))$, one has $f(x) \cdot g(y) = f(y) \cdot g(x)$.
\end{fact}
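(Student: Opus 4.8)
The plan is to deduce the Fact from the Merkurjev--Suslin theorem together with the Milnor $\K$-theoretic reformulation of the alternating condition that was already noted in the discussion preceding the statement. First I would make that reformulation precise. Regard $f,g\in\Gcal(F,\Lambda)$ as $\Frac\Lambda$-linear functionals on $F^\times\hotimes\Lambda$, and consider the assignment
\[ B(a,b) := f(a)\cdot g(b) - f(b)\cdot g(a), \]
which is a continuous, $\Lambda$-bilinear, skew-symmetric form $F^\times\hotimes\Lambda\times F^\times\hotimes\Lambda\to\Frac\Lambda$. By definition $f,g$ form an alternating pair precisely when $B(x,1-x)=0$ for every $x\in F\smin\{0,1\}$, i.e.\ precisely when the homomorphism $F^\times\otimes_\Z F^\times\to\Frac\Lambda$ underlying $B$ annihilates every Steinberg relation, hence the subgroup they generate, hence descends to a continuous linear functional on $\K_2(F)\hotimes\Lambda$. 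Using that the symbols $\{a,b\}$ with $a,b$ in the image of $F^\times$ topologically generate $\K_2(F)\hotimes\Lambda$, together with continuity of $f$ and $g$, one concludes that $f,g$ is an alternating pair if and only if $f(x)g(y)=f(y)g(x)$ for all $x,y\in F^\times\hotimes\Lambda$ with $\{x,y\}=0$ in $\K_2(F)\hotimes\Lambda$.

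Next I would pass to Galois cohomology. Since $\Char F\neq\ell$, Kummer theory identifies $F^\times\hotimes\Lambda$ with $\H^1(F,\Lambda(1))$, and the norm-residue homomorphism $\K_2(F)\hotimes\Lambda\to\H^2(F,\Lambda(2))$---the degree-$2$ component of the ring homomorphism (\ref{equation:norm-residue-morphism})---carries the symbol $\{x,y\}$ to the cup product $x\cup y$. By the Merkurjev--Suslin theorem \cite{MR675529}, equivalently the degree-$2$ case of Bloch--Kato recalled in \S\ref{subsection:notation-and-terminology} (applied with $\Lambda=\Z/\ell^n$ and passed to the inverse limit when $\Lambda=\Z_\ell$), this homomorphism is an isomorphism, in particular injective. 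Therefore, for $x,y\in\H^1(F,\Lambda(1))$, one has $\{x,y\}=0$ in $\K_2(F)\hotimes\Lambda$ if and only if $x\cup y=0$ in $\H^2(F,\Lambda(2))$. Substituting this equivalence into the reformulation of the previous paragraph yields exactly the asserted statement.

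The argument is essentially formal, and the only genuinely non-trivial input is the injectivity of the degree-$2$ norm-residue map, i.e.\ Merkurjev--Suslin. The one place where I expect to have to be careful---the ``main obstacle'', such as it is---is the bookkeeping around the completed tensor product: $f$ and $g$ are defined on $F^\times\hotimes\Lambda$ rather than on $F^\times$, so in passing from the Steinberg relations (which involve only honest elements of $F^\times$) to arbitrary $x,y\in F^\times\hotimes\Lambda$ one must invoke continuity of $B$ and density of the decomposable symbols in $\K_2(F)\hotimes\Lambda$. For $\Lambda=\Z/\ell$ this is immediate, and for $\Lambda=\Z_\ell$ it follows from the continuity built into the elements of $\Gcal(F,\Lambda)$.
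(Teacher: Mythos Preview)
Your proposal is correct and follows essentially the same approach as the paper: reformulate the alternating condition via $\K_2(F)\hotimes\Lambda$, identify $F^\times\hotimes\Lambda$ with $\H^1(F,\Lambda(1))$ via Kummer theory, and then invoke Merkurjev--Suslin to translate vanishing of $\{x,y\}$ into vanishing of $x\cup y$. Your treatment is in fact slightly more careful than the paper's, which simply records the observation without spelling out the continuity/density bookkeeping for the $\ell$-adic case.
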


\subsection{Connection with Valuations}

There is a deep connection between alternating pairs and valuations (this can already be seen in the classical calculations surrounding the construction of the tame symbol in Milnor K-theory).
We will record some of these basic facts here, all of which follow easily from the ultrametric inequality.

\begin{fact}\label{fact:residue-alternating-implies-alternating}
  Let $v$ be a valuation of $F$ and let $f,g \in \Dcal_v \cap \Gcal^\pm(F,\Lambda)$ be given.
  Then $f,g$ form an alternating pair if and only if their images in $\Gcal(Fv,\Lambda)$ form an alternating pair.
\end{fact}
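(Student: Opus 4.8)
The plan is to verify the defining identity of an alternating pair,
\[ f(x) \cdot g(1-x) = f(1-x) \cdot g(x), \]
separately for each $x \in F \smin \{0,1\}$, by a case analysis on the pair of values $(v(x),v(1-x))$. The point is that in every case except one this identity holds for purely formal reasons (using that $f,g$ kill $\U_v^1$, respectively that $f(-1) = g(-1) = 0$), while in the one remaining case it is literally the alternating identity for the images $\bar f,\bar g \in \Gcal(Fv,\Lambda)$ evaluated at the residue $\bar x \in Fv \smin \{0,1\}$. Recall here that the restriction map $\Dcal_v \to \Gcal(Fv,\Lambda)$ sends $f$ to the homomorphism $\bar f$ determined by $\bar f(\bar x) = f(\tilde x)$ for any $\tilde x \in \U_v$ lifting $\bar x$; this is well defined precisely because $f$ vanishes on $\U_v^1$.

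First I would dispose of the ``formal'' cases. If $v(x) > 0$, then $1-x \in \U_v^1$, so $f(1-x) = g(1-x) = 0$ and the identity reads $0 = 0$; the case $v(1-x) > 0$ is symmetric. By the ultrametric inequality the only other possibility with $v(x) \neq 0$ or $v(1-x) \neq 0$ is $v(x) = v(1-x) < 0$, since $v(x) = 0$ forces $v(1-x) \geq 0$ and $v(x) < 0$ forces $v(1-x) = v(x)$. In that case $(1-x)/(-x) = 1 - x^{-1} \in \U_v^1$, hence $f(1-x) = f(-x) = f(-1) + f(x) = f(x)$ using $f \in \Gcal^\pm(F,\Lambda)$, and likewise $g(1-x) = g(x)$; so both sides of the identity equal $f(x) g(x)$. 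This is the only place the hypothesis $\Gcal^\pm$ is used. In the remaining case $v(x) = v(1-x) = 0$ we have $x \in \U_v$ and $1-x \in \U_v$ with residues $\bar x$ and $1-\bar x$, both nonzero, so that $\bar x \in Fv \smin \{0,1\}$; and $f(x) = \bar f(\bar x)$, $g(1-x) = \bar g(1-\bar x)$, and symmetrically, so the identity at $x$ is exactly the alternating identity for $\bar f,\bar g$ at $\bar x$.

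Putting these together gives both implications at once. For ``only if'': given $\bar x \in Fv \smin \{0,1\}$, any lift of $\bar x$ to $\Ocal_v$ automatically has $v(x) = 0$ (as $\bar x \neq 0$) and $v(1-x) = 0$ (as $\bar x \neq 1$), so the alternating identity for $f,g$ at $x$ specializes to that for $\bar f,\bar g$ at $\bar x$. For ``if'': for any $x \in F \smin \{0,1\}$ the case analysis shows the alternating identity for $f,g$ at $x$ either holds automatically or, in the case $v(x) = v(1-x) = 0$, follows from the alternating identity for $\bar f,\bar g$ at the residue $\bar x$. The argument is entirely elementary and I do not expect any real obstacle; the only point requiring care is to be exhaustive in the case analysis, in particular not to overlook the case $v(x) = v(1-x) < 0$, which is exactly where the condition $f,g \in \Gcal^\pm(F,\Lambda)$ becomes necessary.
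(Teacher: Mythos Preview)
Your proposal is correct and follows essentially the same route as the paper: a case analysis on $(v(x),v(1-x))$ showing that the alternating identity is automatic in all cases except $v(x)=v(1-x)=0$, where it reduces to the residue condition, with the $\Gcal^\pm$ hypothesis entering precisely in the case $v(x)=v(1-x)<0$. The paper's write-up is slightly terser but the argument is the same.
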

\begin{proof}
  If $f,g$ form an alternating pair, then their restrictions in $\Gcal(Fv,\Lambda)$ clearly do as well.
  The converse is also straightforward, using the ultrametric inequality.
  We must show that the following equation
  \begin{equation}\label{equation:alternating-pair-condition}
    f(x) \cdot g(1-x) = f(1-x) \cdot g(x)
  \end{equation}
  holds for all $x \in F \smin \{0,1\}$.

  Indeed, if $v(x) = v(1-x) = 0$ then (\ref{equation:alternating-pair-condition}) holds by assumption.
  If $v(x) > 0$ resp. $v(1-x) > 0$ then $f(1-x) = g(1-x) = 0$ resp. $f(x) = g(x) = 0$, so equation (\ref{equation:alternating-pair-condition}) trivially holds.

  If $v(x) < 0$, then $f(1-x) = f(x)$ and $g(1-x) = g(x)$ since $x^{-1}-1 \in -1 \cdot  \U_v^1$ and $1-x = x \cdot (x^{-1} - 1)$.
  The same is also true if $v(1-x) < 0$ (replace $x$ with $1-x$).
  In any case, equation (\ref{equation:alternating-pair-condition}) holds true for all $x \in F \smin \{0,1\}$.
\end{proof}

\begin{lemma}\label{lemma:alternating-implies-comparable}
  Suppose that $\Sigma \cup \{g\}$ is an alternating subset of $\Gcal^\pm(F,\Lambda)$ and that all the elements of $\Sigma$ are valuative.
  Then $\Sigma$ is valuative and, letting $v$ denote the valuation associated to $\Sigma$, one has $g \in \Dcal_v$.
\end{lemma}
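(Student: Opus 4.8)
The plan is to proceed in two stages: first show that the single element $g$ is ``valuative relative to each $f \in \Sigma$'' by pairing it with each element of $\Sigma$, and then glue this local information using Fact~\ref{fact:valuative-morphisms} to obtain a single valuation $v$ governing all of $\Sigma$, finally checking $g \in \Dcal_v$. More precisely, for each $f \in \Sigma$, apply whatever ``alternating pair with a valuative element'' statement is available (the standard input here, in the spirit of Fact~\ref{fact:residue-alternating-implies-alternating} and the tame-symbol calculations): since $f$ is valuative, let $v_f$ be the coarsest valuation with $f \in \Ical_{v_f}$ (Fact~\ref{fact:valuative-morphisms}), and deduce from the alternating condition on $\{f,g\}$ that $g \in \Dcal_{v_f}$. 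The key algebraic mechanism is the ultrametric inequality exactly as in the proof of Fact~\ref{fact:residue-alternating-implies-alternating}: if $x \in \U_{v_f}^1$ but $g(x) \neq 0$, one produces, using that $f$ is ``deep enough'', an element witnessing a violation of the alternating identity $f(x)g(1-x) = f(1-x)g(x)$.

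Next I would establish that $\Sigma$ itself is valuative. The elements of $\Sigma$ are pairwise alternating and each is individually valuative, so by Fact~\ref{fact:valuative-morphisms} each $f \in \Sigma$ has an associated coarsest valuation $v_f$. I would show that these $v_f$ are pairwise comparable and in fact all refine to a common coarsening: given $f_1, f_2 \in \Sigma$, the pair $\{f_1,f_2\}$ being alternating in $\Gcal^\pm$ forces (again by an ultrametric argument on the defining identity) that $v_{f_1}$ and $v_{f_2}$ are comparable, and then the coarsest common coarsening $v$ of all the $v_f$ satisfies $\Sigma \subset \Ical_v$. Alternatively, and perhaps more cleanly, I would directly verify that the set
\[ H := \{ x \in F^\times \ : \ \forall f \in \Sigma, \ f(x) = 0 \} \]
is the unit group of a valuation by checking the criterion implicit in Fact~\ref{fact:valuative-morphisms} — that is, show $1 + y \in (x+y)\cdot H$ for $x \in H$ and $y \notin H$ using the alternating relations among elements of $\Sigma$ together with their valuativity. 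This identifies $v := v_\Sigma$ as the valuation of the Lemma.

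Finally, to conclude $g \in \Dcal_v$: by Fact~\ref{fact:valuative-morphisms}, $v$ is a coarsening of each $v_f$, so $\U_v^1 \subset \U_{v_f}^1$ for every $f \in \Sigma$; but actually I need the reverse-direction control, namely that $\U_v^1$ is small enough that $g$ kills it. Here I would use that $v$ is the coarsening of (say) $v_{f_0}$ associated to the maximal convex subgroup of $v_{f_0}(H)$, together with the fact, established in the first stage, that $g \in \Dcal_{v_{f_0}}$, i.e. $g$ vanishes on $\U_{v_{f_0}}^1$; combined with the alternating-pair behaviour of $g$ against $f_0$ on the residue field $Fv_{f_0}$ and an induction/compatibility argument down the chain of coarsenings from $v_{f_0}$ to $v$, one gets $g(\U_v^1) = 0$. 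The main obstacle I anticipate is precisely this last gluing step — controlling $g$ on $\U_v^1$ rather than on the smaller $\U_{v_f}^1$ — since it requires knowing that the alternating condition propagates correctly through the passage to residue fields (this is where Fact~\ref{fact:residue-alternating-implies-alternating} is essential) and that no ``new'' part of $\U_v^1$ escapes the vanishing of $g$; the bookkeeping of convex subgroups and coarsenings is where the real care is needed, whereas the individual ultrametric estimates are routine.
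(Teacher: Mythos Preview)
Your overall architecture matches the paper's proof: first show $g \in \Dcal_{v_f}$ for each $f \in \Sigma$ via the ultrametric calculation, then establish pairwise comparability of the $v_f$, then pass to a single valuation $v$ and conclude $g \in \Dcal_v$. However, two points in your plan need correction.

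First, in Stage~2 you are vague about \emph{why} $v_{f_1}$ and $v_{f_2}$ are comparable, writing only ``an ultrametric argument on the defining identity.'' The actual mechanism in the paper is different and sharper: having already shown (by Stage~1 applied symmetrically) that $f_1$ and $f_2$ both vanish on $\U_{v_1}^1$ and on $\U_{v_2}^1$, one invokes the \emph{approximation theorem} to get $\U_{v_1}^1 \cdot \U_{v_2}^1 = \U_w$, where $w$ is the finest common coarsening of $v_1,v_2$. Hence $f_1,f_2 \in \Ical_w$, so $\{f_1,f_2\}$ is valuative, and then Fact~\ref{fact:valuative-morphisms} forces $v_1,v_2$ to both be coarsenings of $v_{\{f_1,f_2\}}$, hence comparable. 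This use of approximation is the substantive step and should be named. (Your alternative of showing $H$ is itself a unit group is not correct: in general $\U_{v_\Sigma} \subsetneq H$, as the proof of Fact~\ref{fact:valuative-morphisms} makes explicit.)

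Second, your Stage~3 has the coarsening/refinement direction reversed, and the ``obstacle'' you anticipate is an artifact of this. Since $f \in \Ical_{v_\Sigma}$ for each $f \in \Sigma$, the minimality of $v_f$ makes $v_f$ a \emph{coarsening} of $v_\Sigma$; equivalently, $v := v_\Sigma$ is the valuative \emph{supremum} of the chain $\{v_f\}$, not a common coarsening. Consequently $\mfrak_v = \bigcup_f \mfrak_{v_f}$ and hence $\U_v^1 = \bigcup_f \U_{v_f}^1$. Since Stage~1 gives $g(\U_{v_f}^1) = 0$ for every $f$, one gets $g(\U_v^1)=0$ immediately---no induction down a chain of coarsenings, no passage to residue fields. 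The paper's ``the arguments above show \ldots\ $g \in \Dcal_v$'' is exactly this one-line observation.
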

\begin{proof}
  First let us assume that $\Sigma = \{f\}$, and put $v = v_f$.
  It is straightforward to see that $g$ is trivial on $\U_{v}^1$.
  Indeed, if $v(x) > 0$, then $f(1-x) = 0$ hence
  \[ 0 = g(x) \cdot f(1-x) =  g(1-x) \cdot f(x). \]
  If $f(x) \neq 0$, it follows that $g(1-x) = 0$.
  If on the other hand $f(x) = 0$, then we may find some $y$ such that $0 < v(y) < v(x)$ and such that $f(y) \neq 0$ (otherwise $f$ is trivial on the interval $[0,v(x)]$, contradicting the definition of $v$).
  Since $v(x+y-xy) = v(y) > 0$, we have $f(y) = f(x+y-xy)$ and:
  \[ g((1-x)(1-y)) \cdot f(y) = g(x+y-xy) \cdot f(1-(x+y-xy)) = 0 \]
  hence $f(y) \neq 0$ implies that $g((1-x)(1-y)) = 0$.
  The argument above already shows that $g(1-y) = 0$, hence $g(1-x) = 0$ as well.

  Now remove the restriction on $\Sigma$, let $f_1,f_2 \in \Sigma$ be given, and put $v_i := v_{f_i}$.
  We will show that $v_1$ and $v_2$ are comparable.
  Using the argument above, we see that $f_1$ and $f_2$ are both trivial on $\U_{v_1}^1$ and $\U_{v_2}^1$.
  By the approximation theorem for valuations, we know that $\U_{v_1}^1 \cdot \U_{v_2}^1 = \U_w$ where $w$ is the finest common coarsening of $v_1$ and $v_2$.
  Thus, $f_1,f_2$ are both trivial on $\U_w$, hence $S := \{f_1,f_2\}$ is valuative, and $v_1$, $v_2$ are both coarsenings of $v_S$ by Fact~\ref{fact:valuative-morphisms}.
  This implies that $v_1$ and $v_2$ are comparable (and that $w$ must have been the coarser among the $v_1$, $v_2$).

  As argued above, we know that $g \in \Dcal_{v_f}$ for all $f \in \Sigma$.
  To conclude, let $v$ denote the valuative supremum of the $v_f$, which exists since all the $v_f$ are pairwise comparable.
  The arguments above show that $v = v_\Sigma$ and $g \in \Dcal_v$.
\end{proof}

\subsection{Detecting Valuations}\label{subsection:recovering-valuations}

We recall the following fundamental theorem relating alternating pairs to valuations of $F$.
We have distilled the essential parts of the arguments from~\cite{MR1977585}*{Proposition 4.1.2} and~\cite{MR3692015}*{\S11} in the proof below.
\begin{theorem}\label{theorem:fundamental-theorem-of-alternating-pairs}
  Let $f,g \in \Gcal^\pm(F,\Lambda)$ be given.
  Then the following are equivalent:
  \begin{enumerate}
  \item The pair $f,g$ is alternating.
  \item There exists some valuation $v$ of $F$ such that $f,g \in \Dcal_v$ and such that $\Ical_v \cap \langle f,g \rangle$ has codimension $\leq 1$ in $\langle f,g \rangle$.
  \end{enumerate}
\end{theorem}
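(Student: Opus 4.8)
The plan is to handle the two implications separately: $(2)\Rightarrow(1)$ is essentially formal, while $(1)\Rightarrow(2)$ carries all of the content, and its nontrivial part will be a construction of a valuation out of ``rigidity''. For $(2)\Rightarrow(1)$, the only case requiring an argument is when $f,g$ are linearly independent over $\Frac\Lambda$ (a linearly dependent pair is alternating for trivial reasons), so suppose this; then $\langle f,g\rangle$ is $2$-dimensional and the codimension hypothesis in (2) forces $\Ical_v\cap\langle f,g\rangle$ to contain a nonzero element $h$. Since $\Ical_v$ is the kernel of the restriction $\Dcal_v\to\Gcal(Fv,\Lambda)$ and $f,g\in\Dcal_v$, writing $h=af+bg$ we get $a\bar f+b\bar g=0$ in $\Gcal(Fv,\Lambda)$ with $(a,b)\neq(0,0)$, so $\bar f,\bar g$ are linearly dependent and hence form an alternating pair in $\Gcal(Fv,\Lambda)$; as $f,g\in\Dcal_v\cap\Gcal^\pm(F,\Lambda)$, Fact~\ref{fact:residue-alternating-implies-alternating} then upgrades this to the alternation of $f,g$, which is (1).

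For $(1)\Rightarrow(2)$ I would first dispose of the degenerate case: if $f,g$ are linearly dependent then $\langle f,g\rangle$ has dimension $\leq 1$, and the trivial valuation $v$ works, since $\Dcal_v=\Gcal(F,\Lambda)$ contains $f,g$ while $\Ical_v=0$ has codimension $\leq 1$ in any space of dimension $\leq 1$. So assume $f,g$ independent. The idea is to encode the alternating pair as a map to a projective line and then read off a valuation. Put $\kappa:=\Frac\Lambda$ and consider $(f,g)\colon F^\times\to\kappa^2$: its image spans $\kappa^2$ (else $f,g$ would be dependent), its kernel is $H:=\ker f\cap\ker g$ with $-1\in H$ (because $f,g\in\Gcal^\pm(F,\Lambda)$), and for each line $L\subseteq\kappa^2$ the preimage $G_L:=\{x\in F^\times:(f(x),g(x))\in L\}$ is a subgroup with $H\subseteq G_L$, with $F^\times/G_L$ of rank $1$, and with $G_L=\ker\phi_L$ for the nonzero functional $\phi_L\in\langle f,g\rangle$ cutting out $L$. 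Writing $\chi(x)$ for the line through $(f(x),g(x))$ when $x\notin H$, a short computation --- applying the alternating relation to $-x$ in place of $x$ and using $f(-1)=g(-1)=0$ to see $\chi(-x)=\chi(x)$ --- should show that the alternating condition is equivalent to: for every $x\in F^\times\smin H$, both $1-x$ and $1+x$ lie in $G_{\chi(x)}$. When $\ell=2$ this is exactly the assertion that every $x\notin H$ is $H$-rigid, since then $G_{\chi(x)}=H\sqcup xH$; for odd $\ell$, and for $\Lambda=\Z_\ell$, one obtains only the weaker ``rigidity up to torsion in $F^\times/G_{\chi(x)}$''.

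From this structure I would invoke the classical theory of rigid elements and valuation fans (Ware, Arason--Elman--Jacob, Koenigsmann, Efrat, and its adaptation to the present mixed setting in \cites{MR1977585,MR3692015}) to produce a valuation $v$ of $F$ with $\U_v^1\subseteq H$ and with $(f,g)(\U_v)$ contained in a single line of $\kappa^2$ (possibly the zero line). The first property is exactly $f,g\in\Dcal_v$; the second exhibits a nonzero element of $\Ical_v\cap\langle f,g\rangle$ --- namely $\phi_L$ for the line $L$ in question, or both $f$ and $g$ when that line is $0$ --- so $\Ical_v\cap\langle f,g\rangle$ has codimension $\leq 1$ in $\langle f,g\rangle$, which is (2). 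As an alternative packaging one can instead extract a single nonzero valuative element $h\in\langle f,g\rangle$ and then apply Lemma~\ref{lemma:alternating-implies-comparable} to $\Sigma=\{h\}$, using that $\{h,f\}$ and $\{h,g\}$ are alternating because $\{f,g\}$ is, to recover $f,g\in\Dcal_{v_h}$ together with $0\neq h\in\Ical_{v_h}\cap\langle f,g\rangle$.

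The hard part will be precisely this last step: manufacturing an honest Krull valuation from the combinatorial $\Pbb^1$-valued data. The delicate points are that genuine rigidity is available only for $\ell=2$, so for odd $\ell$ (and for $\Lambda=\Z_\ell$) one must work with ``rigidity up to $\ell$-th powers'' and argue more carefully; that one must identify which of the subgroups $H$ or $G_L$ is realized as the unit group $\U_v$ of a valuation, in general only after passing to an appropriate coarsening; and that one must ensure it is the full unit group $\U_v$, not merely the principal units $\U_v^1$, that is captured, since only the former yields the codimension estimate. Organizing these cases uniformly is exactly the content being distilled here from \cites{MR1977585,MR3692015}.
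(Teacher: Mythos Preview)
Your outline is correct and follows the same overall route as the paper: $(2)\Rightarrow(1)$ via Fact~\ref{fact:residue-alternating-implies-alternating} after reducing to linearly dependent residues, and $(1)\Rightarrow(2)$ by extracting a rigidity statement from the alternating condition and then invoking the Arason--Elman--Jacob machinery. You have also correctly located where the content lies.

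Where you and the paper diverge is at precisely the step you flag as hard. Rather than producing the valuation directly from your $\Pbb^1$-valued map $\chi$ and the subgroups $G_L$, the paper first reduces (via~\cite{MR910395}*{Theorem 2.16}) to a purely combinatorial claim about $\Psi=(f,g)\colon F^\times\to(\Frac\Lambda)^2$: if $x,y\in F\smin\{0,1\}$ both fail the strong rigidity $\Psi(1+z)\in\{\Psi(1),\Psi(z)\}$, then $\Psi(x),\Psi(y)$ must already be linearly dependent. This claim is then proved by a projective argument in $\Pbb^2(\Frac\Lambda)$, not $\Pbb^1$: one embeds $(\Frac\Lambda)^2\hookrightarrow\Pbb^2$, applies a projective change of coordinates sending $\Psi(1),\Psi(x),\Psi(y),\Psi(1+x),\Psi(1+y)$ to five points in general position, observes that the alternating condition forces $\Psi(u\pm v)$ onto the line through $\Psi(u),\Psi(v)$, and then shows inductively that the transformed image hits $(M{:}N{:}0)$ for all coprime nonnegative integers $M,N$. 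For $\Lambda=\Z/\ell$ this contradicts the fact that the affine image misses the line at infinity; for $\Lambda=\Z_\ell$ one uses that the image of $\Psi$ lies in a lattice and hence cannot accumulate at infinity, while density of $\Q_{\geq 0}$ in $\Q_\ell$ would force it to.

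In particular, the case analysis you anticipate (which $G_L$ becomes $\U_v$, $\U_v$ versus $\U_v^1$, odd $\ell$ versus $\ell=2$) is bypassed: once the linear-dependence claim is in hand, the classical rigid-elements theorem delivers the valuation with the required properties in one stroke. Your proposal is thus a correct sketch that cites away exactly what the paper makes explicit.
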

\begin{proof}[Proof]
  The implication $(2) \Rightarrow (1)$ follows from Fact~\ref{fact:residue-alternating-implies-alternating}.
  For the non-trivial direction $(1) \Rightarrow (2)$, put $\Psi(-) = (f(-),g(-))$, considered as a homomorphism
  \[ \Psi : F^\times \rightarrow {(\Frac\Lambda)}^2. \]
  By the theory of Rigid Elements~\cite{MR910395}*{Theorem 2.16}, it is enough to show that, given any pair $x,y \in F \smin \{0,1\}$ such that $\Psi(1+z) \notin \{\Psi(1),\Psi(z)\}$ for $z = x,y$, the pair $\Psi(x)$, $\Psi(y)$ must be linearly dependent (see e.g.\ the argument from~\cite{TopazTorelli}*{Theorem A.9}).
  In the case where $\Lambda = \Z/2$, the theorem is taken care of directly using the theory of rigid elements (see~\cite{TopazTorelli}*{Theorem A.4}), so we will ignore this case below.

  The condition that $f,g$ are alternating tells us that for any $u,v \in F^\times$, the vector $\Psi(u-v) = \Psi(u) + \Psi(1-v/u)$ lies on an affine line containing $\Psi(u)$ and $\Psi(v)$, which is uniquely determined as soon as $\Psi(u) \neq \Psi(v)$.
  The same is true for $\Psi(u+v)$ since $\Psi(-1) = \Psi(1)$.

  Suppose that $x,y$ as above are given, and assume for a contradiction that $\Psi(x)$ and $\Psi(y)$ are linearly independent.
  Embed ${(\Frac\Lambda)}^2$ into $\Pbb := \Pbb^2(\Frac\Lambda)$ as the set of elements (in homogeneous coordinates) of the form $(1:a:b)$, $(a,b) \in {(\Frac\Lambda)}^2$, and compose with a $\Frac\Lambda$-projective-linear automorphism $\Sigma$ of $\Pbb$ to obtain a map
  \[ \Sigma \circ \Psi := \Phi : F^\times \rightarrow \Pbb \]
  which satisfies the following conditions:
  \begin{enumerate}
    \item For all $u,v \in K^\times$, $\Phi(u+v)$ and $\Phi(u-v)$ lie on a projective line containing $\Psi(u)$ and $\Psi(v)$.
    \item One has $\Phi(1) = (1:0:0)$, $\Phi(x) = (1:1:0)$, $\Phi(y) = (1:0:1)$, $\Phi(1+x) = (0:1:0)$ and $\Phi(1+y) = (0:0:1)$.
  \end{enumerate}

  A straightforward but tedious inductive argument using the conditions above shows that the image of $\Phi$ contains $(M:N:0)$ for all $(M,N) \in \Z^2$ with $M,N \geq 0$ relatively prime.\footnote{In fact, for coprime $(A,B,C) \in \Z^3 \smin \{(0,0,0)\}$, one has $\Phi(A+B \cdot x + C \cdot y) = (B+C-A:B:C)$.}
  See the argument in~\cite{TopazTorelli}*{Theorem A.9} for more detail, particularly  steps 1-4 in that proof.
  
  This will provide us with the required contradiction.
  Indeed, first recall that there exist $A,B \in \Frac\Lambda \smin \{0,1\}$ such that $\Psi(1+x) = A \cdot \Psi(x)$ and $\Psi(1+y) = B \cdot \Psi(y)$. 
  Also note that the automorphism $\Sigma$ sends the line at infinity $\{(0:u:v) \ : \ (u,v) \in (\Frac\Lambda)^2 \smin \{(0,0)\} \}$ in $\Pbb$ to the line between $(1:1-A:0)$ and $(1:0:1-B)$.

  In the case where $\Lambda = \Z/\ell$, we see that the image of $\Psi$ does not contain the line at infinity.
  However, as noted above, the image of $\Phi$ contains $(1:1-A:0)$.
  This is impossible.

  In the case where $\Lambda = \Z_\ell$, the definition of $\Gcal(F,\Lambda)$ ensures that the image of $f : F^\times \rightarrow \Q_\ell$ is contained in a set of the form $\ell^n \cdot \Z_\ell$ for some $n \in \Z$, and similarly for $g$.
  In other words, the image of $(f,g)$ is a lattice in $\Q_\ell^2$, and so the image of $\Psi$ cannot contain a sequence which converges $\ell$-adically to a point on the line at infinity.
  However, the closure of the image of $\Phi$ contains $(1:1-A:0)$ since $\Q_{\geq 0}$ is dense in $\Q_\ell$.
  Again, this is impossible.
\end{proof}

We further record the following result which follows easily from Theorem~\ref{theorem:fundamental-theorem-of-alternating-pairs} along with Lemma~\ref{lemma:alternating-implies-comparable}.
\begin{corollary}\label{corollary:fundamental-theorem-alternating-sequences}
  Let $\Dcal \subset \Gcal^\pm(F,\Lambda)$ be a subspace.
  Then the following are equivalent:
  \begin{enumerate}
  \item $\Dcal$ is alternating.
  \item There exists a valuation $v$ of $F$ such that $\Dcal \subset \Dcal_v$ and such that $\Dcal \cap \Ical_v$ has codimension $\leq 1$ in $\Dcal$.
  \end{enumerate}
  Furthermore, let $\Sigma$ denote the subset of $\Dcal$ consisting of all its valuative elements.
  If the above two equivalent conditions hold true, then the following hold true as well:
  \begin{enumerate}
  \item $\Sigma$ is a subspace of codimension $\leq 1$ in $\Dcal$.
  \item $\Sigma$ is valuative, and, putting $v := v_\Sigma$, one has $\Dcal \subset \Dcal_v$.
  \end{enumerate}
\end{corollary}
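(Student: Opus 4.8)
The plan is to deduce the corollary from Theorem~\ref{theorem:fundamental-theorem-of-alternating-pairs}, Lemma~\ref{lemma:alternating-implies-comparable}, and Fact~\ref{fact:residue-alternating-implies-alternating}; once those are invoked, the proof becomes essentially bookkeeping about $\Frac\Lambda$-subspaces of $\Gcal^\pm(F,\Lambda)$. For the implication $(2)\Rightarrow(1)$ I would use that $\Ical_v$ is the kernel of the restriction map $\Dcal_v\to\Gcal(Fv,\Lambda)$. Restricting to $\Dcal$ yields an embedding of $\Dcal/(\Dcal\cap\Ical_v)$, a space of dimension $\leq 1$, into $\Gcal(Fv,\Lambda)$. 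Hence for any $f,g\in\Dcal$ their images in $\Gcal(Fv,\Lambda)$ are $\Frac\Lambda$-linearly dependent, and a linearly dependent pair is trivially alternating; since moreover $f,g\in\Dcal_v\cap\Gcal^\pm(F,\Lambda)$, Fact~\ref{fact:residue-alternating-implies-alternating} upgrades this to the statement that $f,g$ is an alternating pair in $F$. As $f,g$ were arbitrary, $\Dcal$ is alternating.

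For $(1)\Rightarrow(2)$ I would prove the ``furthermore'' clause along the way. Assume $\Dcal$ is alternating and let $\Sigma\subseteq\Dcal$ be its set of valuative elements; note $0\in\Sigma$. Since $\Sigma$ is alternating (as a subset of $\Dcal$) and all of its elements are valuative, Lemma~\ref{lemma:alternating-implies-comparable}, applied with some fixed element of $\Sigma$ in the role of ``$g$'', shows $\Sigma$ is valuative; put $v:=v_\Sigma$, so $\Sigma\subseteq\Ical_v$ by Fact~\ref{fact:valuative-morphisms}. As $\Ical_v$ is a subspace, the $\Frac\Lambda$-span of $\Sigma$ lies in $\Ical_v$, hence consists of valuative elements, which forces $\Sigma$ to be a subspace. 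Next, for any $h\in\Dcal$ the set $\Sigma\cup\{h\}$ is alternating with all elements of $\Sigma$ valuative, so Lemma~\ref{lemma:alternating-implies-comparable} gives $h\in\Dcal_v$; thus $\Dcal\subseteq\Dcal_v$. This already establishes conclusion $(2')$ of the ``furthermore'' clause.

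The one point requiring a little care — though it is still short, and is where the full force of Theorem~\ref{theorem:fundamental-theorem-of-alternating-pairs} enters — is that $\Sigma$ has codimension $\leq 1$ in $\Dcal$, i.e.\ conclusion $(1')$. I would argue by contradiction: otherwise there are $f,g\in\Dcal$ whose images in the quotient $\Dcal/\Sigma$ are linearly independent, so that $f,g$ are themselves linearly independent and no nonzero element of $\langle f,g\rangle$ is valuative. But $f,g$ form an alternating pair, so Theorem~\ref{theorem:fundamental-theorem-of-alternating-pairs} supplies a valuation $w$ with $\Ical_w\cap\langle f,g\rangle$ of codimension $\leq 1$ in the two-dimensional space $\langle f,g\rangle$; this intersection is therefore nonzero, and any nonzero vector in it is a valuative element of $\Dcal$ lying in $\langle f,g\rangle$ — a contradiction. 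Hence $\Sigma$ has codimension $\leq 1$ in $\Dcal$, which is $(1')$.

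Finally, to complete $(1)\Rightarrow(2)$ I take $v:=v_\Sigma$ once more: we have shown $\Dcal\subseteq\Dcal_v$, and the inclusions $\Sigma\subseteq\Dcal\cap\Ical_v\subseteq\Dcal$ together with the codimension bound on $\Sigma$ show that $\Dcal\cap\Ical_v$ has codimension $\leq 1$ in $\Dcal$, as required. The degenerate cases $\dim_{\Frac\Lambda}\Dcal\leq 1$ are subsumed in this argument, but could also be disposed of directly using the trivial valuation. The main obstacle, to the extent there is one, is the codimension-$\leq 1$ bound on $\Sigma$; everything else is a formal consequence of the local theory already developed above.
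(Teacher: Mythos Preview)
Your proof is correct and follows precisely the route the paper indicates: the corollary is stated as following ``easily from Theorem~\ref{theorem:fundamental-theorem-of-alternating-pairs} along with Lemma~\ref{lemma:alternating-implies-comparable}'', and you have filled in exactly those details, together with Fact~\ref{fact:residue-alternating-implies-alternating} for the easy direction. The contradiction argument for the codimension bound on $\Sigma$ is the natural way to invoke Theorem~\ref{theorem:fundamental-theorem-of-alternating-pairs}, and the remaining bookkeeping matches what the paper leaves implicit.
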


\section{Quasi-divisorial valuations}\label{section:quasi-divisorial-valuations}

Throughout this section and for the rest of the paper, we will restrict to the case where $K$ is a function field over an algebraically closed field $k$ of characteristic $\neq \ell$.

\subsection{Terminology}
A \emph{quasi-divisorial valuation} of $K|k$ is a valuation $v$ of $K$ which is minimal among valuations satisfying the following two conditions:
\begin{enumerate}
\item One has an isomorphism of abstract groups $vK/vk \cong \Z$.
\item One has $\trdeg(K|k) = \trdeg(Kv|kv) + 1$.
\end{enumerate}
In particular, a quasi-divisorial valuation $v$ has no transcendence defect in $K|k$, and hence $Kv$ is a function field of transcendence degree $\trdeg(K|k)-1$ over $kv$.
The collection of all quasi-divisorial valuations of $K|k$ will be denoted by $\Qcal(K|k)$.

Recall that $\UU_v$ denotes the closure of the image of $\U_v$ in $\H^1(K,\Lambda(1))$ under the Kummer map.
It turns out that a quasi-divisorial valuation $v$ is completely determined by $\UU_v$.
\begin{fact}\label{fact:qdvs-determined-by-UUv}
  Let $\Lambda$ be an arbitrary quotient of $\Z_\ell$.
  The canonical map
  \[ \Qcal(K|k) \rightarrow \Sub(\H^1(K,\Lambda(1))), \]
  given by $v \mapsto \UU_v$, is injective.
\end{fact}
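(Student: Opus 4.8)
The plan is to show that $\UU_v$ determines $v$ among all quasi-divisorial valuations by first recovering the pair $(\Ical_v, \Dcal_v)$—or rather their analogues inside cohomology—and then appealing to Fact~\ref{fact:valuative-morphisms} together with the defining minimality property of quasi-divisorial valuations. Concretely, for a quasi-divisorial $v$ the value group $vK$ has rational rank equal to $\trdeg(K|k)$ and $vK/vk\cong\Z$, so $\Gamma_v\hotimes\Lambda$ is a free $\Lambda$-module of rank one modulo the (divisible, hence $\hotimes\Lambda$-killed) part coming from $vk$; thus $\partial_v$ has image a rank-one $\Lambda$-module and $\UU_v=\ker\partial_v$ is a codimension/corank-one submodule. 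The first step is therefore to observe that from $\UU_v$ we recover $\partial_v$ up to $\Lambda^\times$, i.e.\ the hyperplane $\UU_v$ and the residue map it defines.

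The main work is to recover $\Dcal_v$ (equivalently $\UU_v^1$, the image of $\U_v^1$) from $\UU_v$. Here I would use the local theory of Section~\ref{section:review-local-theory}: dualize, working with $\Gcal(K,\Lambda)=\Hom^\cont_\Lambda(\H^1(K,\Lambda(1)),\Lambda)\otimes\Frac\Lambda$. The submodule $\UU_v\subset\H^1(K,\Lambda(1))$ has annihilator a line $\Lambda\cdot\partial_v\subset\Gcal(K,\Lambda)$, which is exactly $\Ical_v$ in the quasi-divisorial case (since $\mathrm{rank}(vK)=\trdeg+1$ forces $\Gcal(Kv,\Lambda)$ to see the residue field but $\Ical_v$ itself to be one-dimensional when $vk$ contributes nothing $\ell$-adically... one must be a little careful here, as $\Ical_v$ has dimension equal to the $\ell$-adic rank of $vK$, which is $1$ precisely because $vK/vk\cong\Z$ and $vk$ is divisible). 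So $\Ical_v=\langle\partial_v\rangle$ is determined by $\UU_v$. Then, knowing $\Ical_v$, Fact~\ref{fact:valuative-morphisms} applied to the valuative singleton $\{\partial_v\}$ reconstructs $v_{\{\partial_v\}}$ canonically as an explicit coarsening; and one checks that for quasi-divisorial $v$ one has $v=v_{\{\partial_v\}}$, using minimality: any coarsening $w$ of $v$ with $\partial_v\in\Ical_w$ would have $wK/wk$ still a quotient of $\Z$ with no transcendence defect, contradicting minimality unless $w=v$.

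There is a subtlety I would need to address carefully: two distinct quasi-divisorial valuations could conceivably have the same $\UU_v$ if the passage $\UU_v\rightsquigarrow\Ical_v$ loses information, or if $v_{\{\partial_v\}}\neq v$. To handle the latter, the key point is that a quasi-divisorial valuation is, by its very definition, \emph{minimal} among valuations satisfying the two rank conditions, and the valuation $v_{\{\partial_v\}}$ recovered from $\Ical_v$ is a coarsening of $v$ that still satisfies those conditions—so it must equal $v$. For the former, injectivity of $v\mapsto\Ical_v$ on valuations of this shape follows because $\Ical_v$ determines its own associated coarsest valuation $v_{\Ical_v}$ by Fact~\ref{fact:valuative-morphisms}, and we have just identified that with $v$.

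The step I expect to be the main obstacle is pinning down precisely that $\Ical_v$ (hence $v_{\{\partial_v\}}$) is recoverable from $\UU_v$ alone, i.e.\ that annihilator-then-associated-valuation really returns $v$ and not a proper coarsening; this is where the quasi-divisorial hypotheses $vK/vk\cong\Z$ and the transcendence equality must be used essentially, together with the fact that $vk$ is a divisible group (as $k$ is algebraically closed) so it contributes nothing after $\hotimes\Lambda$, making the residue map $\partial_v$ genuinely rank one. Once that is in place, the rest is a formal application of Facts~\ref{fact:valuative-morphisms} and the uniqueness built into the local theory.
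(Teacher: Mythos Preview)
Your approach is essentially the paper's: from $\UU_v$ recover a valuative element (the class of $\partial_v$), apply Fact~\ref{fact:valuative-morphisms} to obtain the associated valuation, and use the minimality clause in the definition of quasi-divisorial to conclude that this valuation is $v$ itself. The paper carries this out in three lines by composing $\partial_v$ with the projection $\Lambda \twoheadrightarrow \Z/\ell$ and noting that the kernel of the resulting valuative map $\H^1(K,\Lambda(1)) \to \Z/\ell$ is $\UU_v + \ell\cdot\H^1(K,\Lambda(1))$, hence determined by $\UU_v$.

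Two points to tighten. First, your dualization through $\Gcal(K,\Lambda)$ invokes $\Frac\Lambda$, which exists only when $\Lambda$ is a domain; the statement allows arbitrary quotients $\Z/\ell^n$. The paper's reduction modulo $\ell$ sidesteps this by working directly with a $\Z/\ell$-valued valuative map, so Fact~\ref{fact:valuative-morphisms} applies without modification. Second, your side remarks about ranks (``$vK$ has rational rank equal to $\trdeg(K|k)$'', ``$\mathrm{rank}(vK)=\trdeg+1$'') are not correct---for a divisorial $v$ one has $vK\cong\Z$, rational rank $1$. What you actually need, and do eventually state, is only that $vk$ is divisible (as $k$ is algebraically closed), so that $\Gamma_v\hotimes\Lambda \cong (vK/vk)\hotimes\Lambda \cong \Lambda$; this alone makes $\partial_v$ corank one and $\Ical_v$ one-dimensional. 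With these corrections your argument coincides with the paper's.
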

\begin{proof}
  Given $v \in \Qcal(K|k)$, the induced map
  \[ \H^1(K,\Lambda(1)) \xrightarrow{\partial_v} \Gamma_v \hotimes \Lambda \cong \Lambda \twoheadrightarrow \Z/\ell \]
  is valuative.
  The minimality in the definition of $v$ ensures that $v$ is precisely the valuation associated to this valuative morphism.
  This valuation only depends on the kernel of this map which is $\UU_v + \ell \cdot \H^1(K,\Lambda(1))$.
  The assertion follows from Fact~\ref{fact:valuative-morphisms}.
\end{proof}

\subsection{Detecting Transcendence Degree}\label{subsection:recovering-transcendece-degree}

For the rest of this section, we again restrict to the case where $\Lambda$ is a domain, and we will use the notation introduced in \S\ref{section:review-local-theory}.
We will need the following characterization of the transcendence degree of $K|k$ using alternating pairs, which was originally observed by {\sc Bogomolov-Tschinkel}~\cite{MR1977585} and by {\sc Pop}~\cite{MR2735055}.

\begin{fact}\label{fact:recover-trdeg}
  In the above context, $\trdeg(K|k)$ is the maximal dimension of an alternating subspace of $\Gcal(K,\Lambda)$.
\end{fact}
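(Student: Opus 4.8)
\textbf{Proof plan for Fact~\ref{fact:recover-trdeg}.}
The plan is to establish the two inequalities separately. For the lower bound, I would produce an explicit alternating subspace of dimension $d := \trdeg(K|k)$. Choose a transcendence basis $t_1,\dots,t_d$ of $K|k$; since $k$ is algebraically closed and $K$ is finitely generated, $K$ is a finite extension of $k(t_1,\dots,t_d)$. Taking the iterated Gauss (equivalently, the $t_i$-adic) valuations gives a chain of valuations $v_1,\dots,v_d$ on $k(t_1,\dots,t_d)$, each extending to $K$, with the property that the associated valuative elements $f_i := \partial_{v_i} \in \Gcal(K,\Lambda)$ (the composites of $v_i$ with a projection $\Gamma_{v_i}\hotimes\Lambda \to \Frac\Lambda$) are linearly independent and pairwise alternating — pairwise alternating because each pair lies in a common $\Dcal_w$ for a coarsening $w$ and restricts to an alternating pair (indeed to zero) in the residue field, so Fact~\ref{fact:residue-alternating-implies-alternating} applies, or more directly because the tame-symbol computation shows the $t_i$-adic valuations are mutually alternating. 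Hence $\langle f_1,\dots,f_d\rangle$ is an alternating subspace of dimension $d$.

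For the upper bound, I would argue by induction on $d = \trdeg(K|k)$, the case $d=0$ being the statement that $\Gcal(k,\Lambda) = 0$ for $k$ algebraically closed (which holds since $k^\times$ is divisible, so $\Hom(k^\times,\Lambda) = 0$). Given an alternating subspace $\Dcal \subset \Gcal(K,\Lambda)$ of dimension $\geq d+1$, apply Corollary~\ref{corollary:fundamental-theorem-alternating-sequences}: there is a valuation $v$ of $K$ with $\Dcal \subset \Dcal_v$ and $\Dcal \cap \Ical_v$ of codimension $\leq 1$ in $\Dcal$. The quotient $\Dcal_v / \Ical_v$ injects into $\Gamma_v \hotimes \Frac\Lambda$ and, restricting, $\Dcal/(\Dcal\cap\Ical_v)$ has dimension $\leq 1$; meanwhile $\Dcal\cap\Ical_v$ restricts (via the canonical map $\Ical_v \hookrightarrow \Gcal(Kv,\Lambda)$ of \S\ref{section:review-local-theory}) to an alternating subspace of $\Gcal(Kv,\Lambda)$, and this restriction is injective on $\Ical_v$. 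So $\dim(\Dcal\cap\Ical_v) \leq \trdeg(Kv|kv)$ by the inductive hypothesis applied to the function field $Kv|kv$ — here one uses that $kv$ is algebraically closed (as $k\subseteq\Ocal_v$ and $k$ is algebraically closed) and that $Kv|kv$ is again a function field. Combining, $d+1 \leq \dim\Dcal \leq 1 + \trdeg(Kv|kv)$, so it remains to see $\trdeg(Kv|kv) \leq d - 1$, i.e. that $v$ has nontrivial value group; but if $\Gamma_v = 0$ then $\Ical_v = \Dcal_v \supseteq \Dcal$ and $Kv = K$, and then $\dim\Dcal \leq \trdeg(K|k) = d$, contradicting $\dim\Dcal \geq d+1$. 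This closes the induction.

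The main obstacle I anticipate is the bookkeeping in the inductive step: one must be careful that the restriction map $\Ical_v \to \Gcal(Kv,\Lambda)$ is injective on $\Dcal\cap\Ical_v$ (it is injective on all of $\Ical_v$ by the discussion preceding \S\ref{subsection:comparing-valuations}, but the general inequality $\dim(\Dcal\cap\Ical_v)\le \trdeg(Kv|kv)$ needs the image to be a genuine alternating subspace, which follows since restriction is compatible with the alternating-pair condition, Fact~\ref{fact:residue-alternating-implies-alternating}), and that the Abhyankar-type inequality $\trdeg(Kv|kv) + \dim_\Q(\Gamma_v \otimes \Q) \leq \trdeg(K|k)$ is used only through its weak form $\trdeg(Kv|kv) \le \trdeg(K|k)$ together with the fact that a nontrivial value group forces the strict inequality — alternatively one bypasses this by noting $\Dcal\cap\Ical_v \subsetneq \Dcal$ forces the residue function field to have strictly smaller transcendence degree via the inductive bound itself. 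For the lower bound, the only subtlety is checking the $f_i$ are pairwise alternating, which is where one invokes that successive Gauss valuations are mutually comparable and that an element of $\Dcal_w$ restricting to zero in the residue field pairs alternately with anything in $\Dcal_w$ (Lemma~\ref{lemma:alternating-implies-comparable} and Fact~\ref{fact:residue-alternating-implies-alternating}).
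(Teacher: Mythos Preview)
Your lower bound is fine and is essentially what the paper does: build a rank-$d$ discrete valuation $v$ from a regular sequence (or, equivalently, iterated Gauss valuations on a transcendence basis) and observe that $\Ical_v$ is $d$-dimensional and alternating by Fact~\ref{fact:residue-alternating-implies-alternating}.

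The upper bound, however, has a genuine gap: you have the restriction map backwards. In the paper's setup the canonical map is $\Dcal_v \to \Gcal(Kv,\Lambda)$ and its \emph{kernel} is $\Ical_v$; there is no injection $\Ical_v \hookrightarrow \Gcal(Kv,\Lambda)$. Elements of $\Ical_v$ vanish on all $v$-units, hence restrict to \emph{zero} on the residue field. What $\Ical_v$ injects into is $\Hom(\Gamma_v,\Frac\Lambda)$, via the value group. So the inequality you want, $\dim(\Dcal\cap\Ical_v)\leq\trdeg(Kv|kv)$, is false in general (e.g.\ for a rank-$d$ discrete valuation with residue field $k$ one has $\dim\Ical_v=d$ but $\trdeg(Kv|kv)=0$), and the inductive hypothesis cannot be invoked on $\Dcal\cap\Ical_v$. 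The piece that \emph{does} land in $\Gcal(Kv,\Lambda)$ is the image of $\Dcal$, but that has dimension $\leq 1$ by Corollary~\ref{corollary:fundamental-theorem-alternating-sequences}, so induction buys nothing there either. (There is also a secondary issue: $Kv$ need not be finitely generated over $kv$ for an arbitrary $v$, so ``$Kv|kv$ is again a function field'' is not automatic.)

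The paper's argument avoids induction entirely and uses Abhyankar's inequality directly: from Corollary~\ref{corollary:fundamental-theorem-alternating-sequences} one gets $v$ with $\Dcal\subset\Dcal_v$ and $\Dcal\cap\Ical_v$ of codimension $\leq 1$. If $\Dcal\subset\Ical_v$ then $\dim\Dcal\leq\dim\Ical_v\leq\dim_\Q(vK/vk)\leq\trdeg(K|k)$. Otherwise the existence of a nonvaluative element forces $\trdeg(Kv|kv)\geq 1$, and then
\[
\dim\Dcal=\dim(\Dcal\cap\Ical_v)+1\leq\dim_\Q(vK/vk)+\trdeg(Kv|kv)\leq\trdeg(K|k).
\]
Your proof can be repaired by replacing the purported map $\Ical_v\to\Gcal(Kv,\Lambda)$ with the injection $\Ical_v\hookrightarrow\Hom(\Gamma_v,\Frac\Lambda)$ and then invoking Abhyankar, at which point it becomes the paper's proof.
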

\begin{proof}
  This is more-or-less a reformulation of Abhyankar's inequality for valuations of $K|k$.
  Indeed, let $\Dcal$ be an alternating subspace of $\Gcal(K,\Lambda)$.
  Then by Corollary~\ref{corollary:fundamental-theorem-alternating-sequences}, there is a valuation $v$ of $K$ such that $\Dcal \subset \Dcal_v$ and $\Ical_v \cap \Dcal$ has codimension $\leq 1$ in $\Dcal$, and such that $\Ical_v \cap \Dcal$ is the subset of $\Dcal$ consisting of its valuative elements.
  If $\Dcal \subset \Ical_v$, then we have
  \[ \dim(\Dcal) \leq \dim(\Ical_v) \leq \dim_\Q(vK/vk) \leq \trdeg(K|k). \]

  On the other hand, if $\Dcal \not \subset \Ical_v$, then there is a non-valuative element $g \in \Dcal$.
  It follows that $\Gcal(Kv,\Lambda)$ is non-trivial, for otherwise $\Dcal_v = \Ical_v$.
  Hence $\trdeg(Kv|kv) \geq 1$, and we have
  \[ \dim(\Dcal) = \dim(\Dcal \cap \Ical_v) + 1 \leq \dim_\Q(vK/vk) + \trdeg(Kv|kv) \leq \trdeg(K|k). \]

  Finally, if $X$ is a smooth $k$-variety with function field $K$ and $x \in X$ is a closed point, we may choose a regular sequence $t_1,\ldots,t_d$ at $x$, and use this sequence to construct a rank $d$ discrete valuation $v$ of $K$ (which is trivial on $k$).
  The value group $vK$ is isomorphic to $\Z^d$, and hence $\Ical_v$ has dimension $d$.
  The fact that $\Ical_v$ is alternating follows from Fact~\ref{fact:residue-alternating-implies-alternating}.
\end{proof}

\subsection{Parameterization of $\Qcal$}\label{subsection:parametarising-qcal}

We now give the promised parameterization of $\Qcal(K|k)$ using the $\Frac\Lambda$-module $\Gcal(K,\Lambda)$ and the collection of alternating pairs.
An alternative parameterization of quasi-prime divisors (as well as their compositions) was developed by {\sc Pop}~\cite{MR2735055}.
The parameterization we give below is somewhat simpler, and is adapted from the work of {\sc Bogomolov}~\cite{MR1260938} and {\sc Bogomolov-Tschinkel}~\cites{MR1977585,MR2421544}; these works implicitly assume that $k$ is the algebraic closure of a finite field, and hence all quasi-divisorial valuations of $K|k$ are trivial on $k$ (hence are \emph{divisorial}, see \S\ref{section:detecting-divisorial-valuations}).
We make no such restrictions on the base-field $k$, and so, at this point, we only obtain a parameterization of $\Qcal(K|k)$.

\begin{theorem}\label{theorem:qd-local-theory}
  Assume that $d := \trdeg(K|k) \geq 2$.
  Let $\Ical \subset \Gcal(K,\Lambda)$ be a 1-dimensional subspace.
  Then the following are equivalent:
  \begin{enumerate}
  \item There exists two $d$-dimensional subspaces $\Dcal_1,\Dcal_2 \subset \Gcal(K,\Lambda)$ such that $\Dcal_1 \cap \Dcal_2 = \Ical$ and such that $\Dcal_1$ and $\Dcal_2$ are both alternating.
  \item There exists a (unique) quasi-divisorial valuation $v$ of $K|k$ such that $\Ical_v = \Ical$.
  \end{enumerate}
  Furthermore, if these conditions hold true and $h \in \Ical_v$ is non-trivial, then one has
  \[ \Dcal_v = \{f \in \Gcal(K,\Lambda) \ : \ \text{$h,f$ is an alternating pair}\}. \]
\end{theorem}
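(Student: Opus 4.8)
The plan is to prove $(1)\Leftrightarrow(2)$, the uniqueness in $(2)$, and the final displayed formula all at once, the central object being the valuation $v_h:=v_{\{h\}}$ attached by Fact~\ref{fact:valuative-morphisms} to a fixed non-zero $h\in\Ical$: I will show that whenever $(1)$ or $(2)$ holds, $v_h$ is quasi-divisorial with $\Ical_{v_h}=\Ical$, and that $\Dcal_{v_h}$ is exactly the set of $f$ forming an alternating pair with $h$. Since $k$ is algebraically closed of characteristic $\neq\ell$, one has $\mu_4\subseteq k\subseteq K$, so $\Gcal^\pm(K,\Lambda)=\Gcal(K,\Lambda)$ and Fact~\ref{fact:residue-alternating-implies-alternating} and Corollary~\ref{corollary:fundamental-theorem-alternating-sequences} apply without the $\pm$-restriction. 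I will use freely two classical inputs: Abhyankar's inequality for valuations of $K|k$, together with the facts that a transcendence-defect-free valuation $v$ has $vK/vk$ free of rank $\operatorname{rat.rk}(vK/vk)$ and residue field a function field of the expected transcendence degree, and that compositions and coarsenings of such valuations are again of this kind; and the elementary inequality $\dim_{\Frac\Lambda}\Ical_v\leq\operatorname{rat.rk}(vK/vk)$, which is an equality when $v$ has no transcendence defect. I will also repeatedly use that an inertia subspace $\Ical_v$ is always alternating: it lies in $\Dcal_v\cap\Gcal^\pm(K,\Lambda)$ and maps to $0$ in $\Gcal(Kv,\Lambda)$, so Fact~\ref{fact:residue-alternating-implies-alternating} applies.

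Suppose $(2)$ holds, say $v$ quasi-divisorial with $\Ical_v=\Ical$. Then $Kv|kv$ is a function field of transcendence degree $d-1$ over the algebraically closed field $kv$; using generic smoothness I choose a model and two transcendence-defect-free valuations $u_1,u_2$ of $Kv|kv$ of rank $d-1$ arising from regular systems of parameters at smooth closed points, with distinct rank-one coarsenings (hence with trivial finest common coarsening). The compositions $v\circ u_i$ are transcendence-defect-free valuations of $K|k$ with value group $\cong\Z^d$ modulo $k$, so $\Dcal_i:=\Ical_{v\circ u_i}$ is a $d$-dimensional alternating subspace contained in $\Dcal_v$ (as $\U_v^1\subseteq\U_{v\circ u_i}$); moreover $\Dcal_1\cap\Dcal_2=\Ical_{v\circ u_1}\cap\Ical_{v\circ u_2}=\Ical_w$ for the finest common coarsening $w$ of $v\circ u_1,v\circ u_2$ (using $\U_{v\circ u_1}\cdot\U_{v\circ u_2}=\U_w$, via the approximation theorem), and since the common coarsenings of $v\circ u_1$ and $v\circ u_2$ are precisely the coarsenings of $v$, we get $w=v$ and $\Dcal_1\cap\Dcal_2=\Ical$. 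For uniqueness: if $v$ is quasi-divisorial with $\Ical_v=\Ical=\langle h\rangle$, then $\partial_v$ spans $\Ical_v$ up to a unit of $\Frac\Lambda$, so $\Ical$ determines $\ker\partial_v=\UU_v$, and $v$ is then determined by Fact~\ref{fact:qdvs-determined-by-UUv}.

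Now suppose $(1)$ holds. By Corollary~\ref{corollary:fundamental-theorem-alternating-sequences} there are valuations $v_i$ with $\Dcal_i\subseteq\Dcal_{v_i}$ such that $\Sigma_i:=\Dcal_i\cap\Ical_{v_i}$ is the set of valuative elements of $\Dcal_i$, of codimension $\leq1$ in $\Dcal_i$, and $v_i=v_{\Sigma_i}$. Then $d-1\leq\dim\Sigma_i\leq d$ by Fact~\ref{fact:recover-trdeg}; combining $\dim\Sigma_i\leq\dim\Ical_{v_i}\leq\operatorname{rat.rk}(v_iK/v_ik)$ with Abhyankar's inequality $\operatorname{rat.rk}(v_iK/v_ik)+\trdeg(Kv_i|kv_i)\leq d$ — and noting that $\dim\Sigma_i=d-1$ forces $\trdeg(Kv_i|kv_i)\geq1$, since then $\Dcal_i\not\subseteq\Ical_{v_i}$ — shows that $v_i$ has no transcendence defect and that $\dim\Ical_{v_i}=\dim\Sigma_i$, whence $\Ical_{v_i}=\Sigma_i\subseteq\Dcal_i$. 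Fix $0\neq h\in\Ical$. If $v_1,v_2$ are comparable, say $v_1\preceq v_2$, then $\Ical_{v_1}\subseteq\Dcal_1$ and $\Ical_{v_1}\subseteq\Ical_{v_2}\subseteq\Dcal_2$, so $0\neq\Ical_{v_1}\subseteq\Dcal_1\cap\Dcal_2=\Ical$, whence $\Ical_{v_1}=\Ical\ni h$; if $v_1,v_2$ are incomparable with finest common coarsening $w$, then $h\in\Dcal_1\cap\Dcal_2\subseteq\Dcal_{v_1}\cap\Dcal_{v_2}=\Ical_w$, again via $\U_{v_1}^1\cdot\U_{v_2}^1=\U_w$. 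Either way there is a valuation $w$ with $h\in\Ical_w$ and $w\preceq v_1,v_2$, so $v_h$ is a coarsening of $w$ and hence a common coarsening of $v_1$ and $v_2$. This is the crux: every $f\in\Ical_{v_h}$ is trivial on $\U_{v_h}\supseteq\U_{v_1},\U_{v_2}$, hence lies in $\Ical_{v_1}\cap\Ical_{v_2}\subseteq\Dcal_1\cap\Dcal_2=\Ical$, so $\Ical_{v_h}=\langle h\rangle=\Ical$ is one-dimensional. As a coarsening of the transcendence-defect-free $v_1$, the valuation $v_h$ has no transcendence defect, and $v_hK/v_hk$ is free of rank $\dim\Ical_{v_h}=1$, i.e. $\cong\Z$; it is minimal with these two properties, because a strictly coarser such valuation $u$ would have $\Ical_u\subseteq\langle h\rangle$ not containing $h$ (as $v_{\{h\}}$ is the coarsest valuation with $h$ in its inertia), so $\Ical_u=0$ and then $uK/uk=0$, contradicting $uK/uk\cong\Z$. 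Thus $v:=v_h$ is quasi-divisorial with $\Ical_v=\Ical$.

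It remains to identify $\Dcal_v=\Dcal_{v_h}$. If $f\in\Dcal_{v_h}$ then $h,f\in\Dcal_{v_h}\cap\Gcal^\pm(K,\Lambda)$ and the image of $h$ in $\Gcal(Kv_h,\Lambda)$ is $0$, so by Fact~\ref{fact:residue-alternating-implies-alternating} the images of $h,f$ — and therefore $h,f$ themselves — form an alternating pair; conversely, if $h,f$ is an alternating pair, then Lemma~\ref{lemma:alternating-implies-comparable} applied with $\Sigma=\{h\}$ and $g=f$ gives $f\in\Dcal_{v_{\{h\}}}=\Dcal_{v_h}$. The main obstacle is the step in $(1)\Rightarrow(2)$ that recognizes $v_h$ as a common coarsening of $v_1$ and $v_2$: this rests on first using Corollary~\ref{corollary:fundamental-theorem-alternating-sequences} and Abhyankar's inequality to pin the inertia subspaces $\Ical_{v_i}$ inside $\Dcal_i$, and then on a short case distinction according to whether $v_1$ and $v_2$ are comparable; granting this, the one-dimensionality of $\Ical_{v_h}$ — and hence the whole theorem — follows formally.
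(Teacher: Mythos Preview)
Your proof is correct and follows essentially the same route as the paper's: both directions use Corollary~\ref{corollary:fundamental-theorem-alternating-sequences} to produce $v_i$ with $\Ical_{v_i}=\Sigma_i\subseteq\Dcal_i$, then a comparable/incomparable case split (using $\U_{v_1}^1\cdot\U_{v_2}^1=\U_w$ in the latter) to show $h$ is valuative and $v_h$ is a common coarsening of $v_1,v_2$, after which $\Ical_{v_h}=\Ical$ and defectlessness force $v_h$ to be quasi-divisorial; the description of $\Dcal_v$ via Fact~\ref{fact:residue-alternating-implies-alternating} and Lemma~\ref{lemma:alternating-implies-comparable} is likewise identical. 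Your $(2)\Rightarrow(1)$ is somewhat more explicit than the paper's (which just asserts that $\Ical_{w_i\circ v}$ works), and your uniqueness argument invokes Fact~\ref{fact:qdvs-determined-by-UUv} rather than Fact~\ref{fact:valuative-morphisms}, but these are cosmetic differences.
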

\begin{proof}
  Suppose first that $v$ is a quasi-divisorial valuation of $K|k$.
  Let $w_1,w_2$ be two independent discrete rank $d-1$ valuations of $Kv|kv$ (see the argument in the last paragraph of the proof of Fact~\ref{fact:recover-trdeg}), and put $v_i := w_i \circ v$.
  Then $\Dcal_i := \Ical_{v_i}$ satisfies the properties required by (1).

  As for the converse, let $\Sigma_i$ denote the subset of valuative elements of $\Dcal_i$, which is then a valuative subspace of codimension $\leq 1$ in $\Dcal_i$ by Corollary~\ref{corollary:fundamental-theorem-alternating-sequences}.
  Let $v_i$ denote the valuation associated to $\Sigma_i$ and recall that $\Dcal_i \subset \Dcal_{v_i}$.

  We claim that $\Sigma_i = \Ical_{v_i}$.
  Indeed, we have $\Sigma_i \subset \Ical_{v_i}$ by definition, while $\Dcal_i \subset \Dcal_{v_i}$.
  If $\Sigma_i = \Dcal_i$, then $\dim(\Ical_{v_i}) \geq d$, and so Fact~\ref{fact:recover-trdeg} shows us that $\dim(\Ical_{v_i}) = d$ hence $\Sigma_i = \Ical_{v_i}$.
  If $\Sigma_i \neq \Dcal_i$, then $\Dcal_i$ contains a non-valuative element.
  The image of such an element in $\Gcal(Kv,\Lambda)$ is non-trivial, and hence $\trdeg(Kv_i|kv_i) \geq 1$.
  But this means that $\dim(\Ical_{v_i}) \leq \dim_\Q(v_i K/ v_i k) \leq d-1$.
  On the other hand, $\dim(\Sigma_i) \geq d-1$ so we see that $\Sigma_i = \Ical_{v_i}$.
  Note a similar argument also shows that $v_i$ has no transcendence defect in $K|k$.

  Let $h \in \Ical$ be non-trivial.
  We claim that $h$ (and hence $\Ical$) is valuative.
  If, for example, $v_1 \leq v_2$, then
  \[ \Sigma_1 = \Ical_{v_1} \subset \Ical_{v_2} = \Sigma_2 \]
  and hence $\Sigma_1 \subset \Ical = \Dcal_1 \cap \Dcal_2$.
  Since $\dim(\Sigma_1) \geq 1 = \dim(\Ical)$, this means that $\Sigma_1 = \Ical$, so that $h$ is indeed valuative.
  By symmetry, the same holds if $v_2 \leq v_1$.
  If, on the other hand, $v_1$ and $v_2$ are not comparable, then by the approximation theorem, $\U_{v_1}^1 \cdot \U_{v_2}^1 = \U_w$ where $w$ is the finest common coarsening of $v_1$ and $v_2$.
  But $h \in \Dcal_1 \cap \Dcal_2 \subset \Dcal_{v_1} \cap \Dcal_{v_2}$, hence it must be trivial on $\U_w$, which means it is an element of $\Ical_w$.
  In any case, this implies that $\Ical = \Sigma_1 \cap \Sigma_2$, since $\Sigma_i$ is the set of all valuative elements of $\Dcal_i$.

  Let $v := v_h$ denote the valuation of $K$ associated to the valuative element $h$.
  Note $v = v_\Ical$ and that $\Ical \subset \Sigma_i = \Ical_{v_i}$.
  By Fact~\ref{fact:valuative-morphisms}, we see that $v$ is a coarsening of $v_i$, and since $v_i$ is defectless the same is true for $v$.
  On the other hand, we have
  \[ \Ical_v \subset \Ical_{v_1} \cap \Ical_{v_2} = \Sigma_1 \cap \Sigma_2 = \Ical, \]
  while $\Ical \subset \Ical_v$ by the definition of $v$.
  Hence $\Ical_v = \Ical$.
  Finally, as $v$ is defectless, $vK/vk$ is isomorphic to $\Z^r$ for some $r$.
  But then $r = \dim(\Ical_v) = \dim(\Ical) = 1$, so that $vK/vk$ is isomorphic to $\Z$.
  As for the minimality of $v$, if $w$ is any coarsening of $v$ such that $wK/wk \cong \Z$, then $\Ical_v = \Ical_w$, so that $v = w$ by Fact~\ref{fact:valuative-morphisms}.
  Uniqueness follows from Fact~\ref{fact:valuative-morphisms}, while the assertion concerning $\Dcal_v$ follows from Fact~\ref{fact:residue-alternating-implies-alternating} and Lemma~\ref{lemma:alternating-implies-comparable}.
\end{proof}

\section{Detecting divisorial valuations}\label{section:detecting-divisorial-valuations}

A quasi-divisorial valuation $v$ of $K|k$ will be called \emph{divisorial} provided that $v$ is trivial on $k$.
It is known that for any divisorial valuation $v$ of $K|k$, there exists some normal model $X$ of $K|k$ and a codimension $1$ point $x \in X^{(1)}$ such that $\Ocal_v = \Ocal_{X,x}$; see~\cite{MR2316355}*{Remark/Definition 5.2, 5.3} for more on this.
We will write $\Dcal(K|k)$ for the collection of all divisorial valuations of $K|k$, which we consider as a subset of $\Qcal(K|k)$.

The goal of this section is to give a criterion which detects the divisorial valuations among the quasi-divisorial valuations using the Galois action of a non-local field.

\subsection{The canonical Henselian valuation of a field}

We will first sketch the argument for the following fact, which will play a crucial role later on.
\begin{fact}\label{fact:key-valtheory-fact}
  Let $F$ be a field, and let $L|F$ be a finite extension such that $L$ is not separably closed.
  Then the following are equivalent:
  \begin{enumerate}
  \item $F$ has a non-trivial Henselian valuation.
  \item $L$ has a non-trivial Henselian valuation.
  \end{enumerate}
\end{fact}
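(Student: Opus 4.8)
The plan is to prove this via the theory of the canonical Henselian valuation and the behavior of Henselian valuations under finite extensions. Recall that for any field $F$ there is a canonical Henselian valuation $v_F$, defined as follows: if $F$ admits a Henselian valuation whose residue field is not separably closed, then $v_F$ is the \emph{coarsest} such valuation; otherwise $v_F$ is the \emph{finest} Henselian valuation of $F$. A standard fact (see F.-V. Kuhlmann, or Engler--Prestel, \emph{Valuation Theory}) is that $v_F$ is indeed well-defined, that all Henselian valuations of $F$ are comparable with $v_F$ in a suitable sense, and that $v_F$ is preserved by all automorphisms of $F$. The hypothesis that $L$ is not separably closed (and hence neither is $F$, being a subfield over which $L$ is finite, actually one must argue this: $F$ separably closed would force $L$ separably closed since $[L:F]<\infty$) is exactly what makes this dichotomy usable.

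The key step for the direction $(1)\Rightarrow(2)$ is that a Henselian valuation $v$ on $F$ extends uniquely to a valuation $w$ on the finite extension $L$, and this unique extension is automatically Henselian; moreover $w$ is non-trivial since $v$ is. This is completely classical. For the converse $(2)\Rightarrow(1)$, one starts with a non-trivial Henselian valuation $w$ on $L$ and restricts it to $F$, obtaining a valuation $v := w|_F$. The subtlety is that $v$ could a priori be trivial — this happens precisely when $L|F$ is an extension "at the residue level only", i.e. when $w$ is trivial on $F$ but $Lw|F$ is a nontrivial-valued situation, which cannot occur, or more precisely when $L/F$ lies inside the residue field. Here is where one uses that $L|F$ is finite together with $L$ not separably closed: if $v = w|_F$ were trivial, then $F$ embeds into the residue field $Lw$ via the composite $F \hookrightarrow \Ocal_w \twoheadrightarrow Lw$; since $[L:F]$ is finite and $w$ is non-trivial, $Lw$ is a finite extension of $F$, hence $Lw$ is separably closed if and only if $F$ is, hence $Lw$ is not separably closed. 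But then $w$ is a non-trivial Henselian valuation of $L$ with non-separably-closed residue field, so its restriction-compatibility with the canonical Henselian valuation $v_L$ shows $v_L$ is non-trivial — and one transfers this to $F$ through the canonical Henselian valuation. Concretely: the canonical Henselian valuation $v_L$ of $L$ restricts to a Henselian valuation of $F$, and by a theorem of Engler (the canonical Henselian valuation is "well-behaved" under finite extensions — see Engler--Prestel Thm.~4.4.2 / Endler), $v_L|_F$ is non-trivial whenever $v_L$ is non-trivial and $L|F$ is finite with $L$ not separably closed; one then takes this as the desired Henselian valuation of $F$.

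The main obstacle I anticipate is precisely controlling the triviality of the restriction $w|_F$: it is \emph{not} true for arbitrary finite extensions and arbitrary Henselian valuations that the restriction to the base is non-trivial (e.g.\ take $F$ a field with no Henselian valuation and $L=F$ — but that case is excluded since $L$ is not separably closed only rules out one thing; a genuine worry is $F$ real-closed-like). The resolution must go through the canonical Henselian valuation rather than an arbitrary one, because only $v_F$ behaves functorially enough, and one must invoke the comparison theorem for canonical Henselian valuations under finite extensions. So the cleanest route is: (a) reduce immediately to the canonical Henselian valuations $v_F$, $v_L$; (b) observe $v_F$ is non-trivial iff $F$ has \emph{some} non-trivial Henselian valuation, same for $L$ (this is the defining property); (c) cite the finite-extension comparison: $v_L|_F$ and $v_F$ are "the same up to the subtleties at the separably closed residue level", which under the hypothesis "$L$ not separably closed" collapses to: $v_F$ non-trivial $\iff$ $v_L$ non-trivial. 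Since this Fact is stated as something to be "sketched", I would present exactly this chain, citing Engler--Prestel for the structural input and keeping the verification of (a)--(c) at the level of a sketch rather than a full proof.
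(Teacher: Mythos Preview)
Your final plan (a)--(c) is exactly the paper's argument: reduce to the canonical Henselian valuation and cite Engler--Prestel (their Theorem~4.4.2 and the discussion on pp.~106--107) for the fact that the canonical Henselian valuation of $L$ restricts to a Henselian valuation of $F$ lying in $H(F)$. So the overall approach is correct and matches the paper.

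However, you have misidentified the obstacle in $(2)\Rightarrow(1)$. Non-triviality of $w|_F$ is \emph{automatic} whenever $L|F$ is finite: if $w|_F$ were trivial then $F \subset \Ocal_w$, and since $L$ is integral over $F$ while $\Ocal_w$ is integrally closed in $L$, one gets $\Ocal_w = L$, i.e.\ $w$ trivial. Your assertion that ``it is not true for arbitrary finite extensions and arbitrary Henselian valuations that the restriction to the base is non-trivial'' is therefore false, and the paragraph attempting to salvage this via the residue field is superfluous and confused. The genuine reason one passes through the canonical Henselian valuation rather than restricting an arbitrary Henselian $w$ is to guarantee that the restriction is \emph{Henselian} --- this is what the Engler--Prestel input provides, and the paper states exactly this. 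The hypothesis that $L$ is not separably closed enters precisely here (it is a hypothesis in the Engler--Prestel statement), not in the non-triviality step.

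A minor point: your description of the canonical Henselian valuation has the dichotomy backwards. In Engler--Prestel's formulation (reproduced in the paper), one takes the \emph{coarsest} element of $H_2(F)$ when $H_2(F)\neq\varnothing$, and the \emph{finest} element of $H_1(F)$ otherwise. As stated, your version would assign the trivial valuation whenever $F$ is not separably closed.
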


This key fact is certainly very well-known to valuation theorists, who can safely ignore this subsection.
Indeed, it follows from the properties of a well-known construction in valuation theory of the so-called \emph{canonical Henselian valuation} of a field.
This is a (possibly trivial) Henselian valuation which has good properties with respect to restriction in extensions.
Below we recall (without proof) this construction and the basic properties that imply the fact above.
For a general reference, refer the reader to \S4.4 of {\sc Engler-Prestel}~\cite{MR2183496}.

Let $F$ be an arbitrary field.
As always, valuations of $F$ are considered only up-to equivalence, so that we may identify the collection of valuations of $F$ with the collection of valuation subrings of $F$.
We will not distinguish between a valuation and its associated valuation ring.
Consider the following two collections of valuations of $F$:
\begin{enumerate}
\item First, $H_1(F)$ is the collection of all Henselian valuations of $F$ whose residue field is not separably closed.
\item Second, $H_2(F)$ is the collection of all Henselian valuations of $F$ whose residue field is separably closed.
\end{enumerate}
The following summarizes some basic facts concerning these two collections.

\begin{fact}[\cite{MR2183496}, Theorem 4.4.2]
  The following hold:
  \begin{enumerate}
  \item Any two valuations in $H_1(F)$ are comparable.
  \item If $H_2(F)$ is non-empty, then it contains a minimal element which is finer than all valuations in $H_1(F)$.
  \item If $H_2(F)$ is empty, then $H_1(F)$ contains a (unique) maximal element.
  \end{enumerate}
\end{fact}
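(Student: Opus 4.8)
The plan is to reduce all three assertions to F.~K.~Schmidt's theorem --- a field admitting two independent non-trivial Henselian valuations is separably closed --- together with two elementary facts: every coarsening of a Henselian valuation is again Henselian, and the residue field of any valuation of a separably closed field is separably closed. This is, of course, exactly the content of \cite{MR2183496}*{\S4.4}, so I will only indicate the structure of the argument.

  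If $F$ is separably closed then every valuation of $F$ has separably closed residue field, so $H_1(F)$ is empty while the trivial valuation is the least element of $H_2(F)$, and all three assertions hold trivially; assume henceforth that $F$ is not separably closed. I would first establish comparability. Suppose $v,w$ are incomparable Henselian valuations of $F$, let $u$ be their finest common coarsening --- again Henselian by the first fact above --- and let $\bar{v},\bar{w}$ denote the valuations induced on the residue field $Fu$, which are \emph{independent} by the minimality of $u$. If both $\bar{v}$ and $\bar{w}$ were non-trivial, F.~K.~Schmidt would force $Fu$ to be separably closed, and then $Fv = (Fu)\bar{v}$ and $Fw = (Fu)\bar{w}$ would be separably closed as well. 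Hence, as soon as at least one of $Fv,Fw$ fails to be separably closed, one of $\bar{v},\bar{w}$ is trivial, i.e.\ one of $v,w$ is a coarsening of the other --- contradicting incomparability. Applying this with $v,w \in H_1(F)$ gives (1); with $v \in H_1(F)$ and $w \in H_2(F)$ it shows that every element of $H_1(F)$ is a coarsening of every element of $H_2(F)$; and with $v,w \in H_2(F)$ it shows that $H_2(F)$ is totally ordered by coarsening. In short, all Henselian valuations of $F$ form a single chain under refinement, of which $H_1(F)$ is the lower (coarser) part and $H_2(F)$ the upper (finer) part.

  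It then remains to pin down the extremal elements in (2) and (3). Since both families are totally ordered by coarsening, the only candidate for the coarsest element of $H_2(F)$ is the union $\bigcup_{w \in H_2(F)} \Ocal_w$, which is certainly a valuation ring of $F$; similarly the only candidate for the finest element of $H_1(F)$ is the intersection $\bigcap_{v \in H_1(F)} \Ocal_v$. The task thus reduces to showing that the former is a Henselian valuation ring with separably closed residue field (when $H_2(F)$ is non-empty) and the latter a Henselian valuation ring with non-separably-closed residue field (when $H_2(F)$ is empty); uniqueness in (3) is then automatic from the total ordering, and the relation ``finer than all of $H_1(F)$'' in (2) is contained in the previous paragraph. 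The step I expect to be the real obstacle --- and the point at which I would simply defer to \cite{MR2183496}*{Theorem 4.4.2} rather than reproduce the details --- is precisely this last verification: that a union (resp.\ intersection) of a chain of Henselian valuation rings remains Henselian and does not slip out of the prescribed residue-field class.
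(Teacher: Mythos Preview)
The paper does not actually prove this fact: it is stated with a citation to \cite{MR2183496}*{Theorem 4.4.2} and explicitly introduced with ``we recall (without proof)''. So there is no paper-proof to compare against, and your sketch is more than the paper itself offers.

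That said, your sketch contains a genuine error. Your comparability argument hinges on the clause ``as soon as at least one of $Fv,Fw$ fails to be separably closed, one of $\bar v,\bar w$ is trivial''. This is exactly right for pairs with at least one member in $H_1(F)$, and it cleanly yields (1) and the fact that every element of $H_1(F)$ is coarser than every element of $H_2(F)$. But when you write ``with $v,w \in H_2(F)$ it shows that $H_2(F)$ is totally ordered by coarsening'', the hypothesis of your own argument is not met: both residue fields \emph{are} separably closed, so you cannot conclude that one of $\bar v,\bar w$ is trivial. And indeed $H_2(F)$ is \emph{not} totally ordered in general: if $u \in H_2(F)$ then $Fu$ is separably closed, every valuation on $Fu$ is automatically Henselian with separably closed residue field, and composing two incomparable valuations on $Fu$ with $u$ produces incomparable elements of $H_2(F)$.

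What your argument \emph{does} give for $v,w \in H_2(F)$ is that their finest common coarsening $u$ has $Fu$ separably closed (either one of $\bar v,\bar w$ is trivial, in which case $u$ equals $v$ or $w$, or both are non-trivial and F.~K.~Schmidt forces $Fu$ separably closed). Hence $u \in H_2(F)$, so $H_2(F)$ is downward directed under refinement. That is enough: the union $\bigcup_{w \in H_2(F)} \Ocal_w$ of a directed family of valuation rings is still a valuation ring, and one then checks (as you anticipated) that it is Henselian with separably closed residue field. So the repair is to replace ``totally ordered'' by ``downward directed'' throughout the $H_2$ discussion; with that change your outline matches the standard proof in \cite{MR2183496}.
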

The \emph{canonical Henselian valuation} of $F$ is defined to be the minimal element of $H_2(F)$ whenever $H_2(F)$ is non-empty, or the unique maximal element of $H_1(F)$ otherwise.
Note that the canonical Henselian valuation may in fact be the trivial valuation of $F$ (this is the case, for example, in our main case of interest where $F$ is non-local).

We write
\[ H(F) := H_1(F) \cup \{\text{the canonical Henselian valuation of $F$}\}.\]
The following fact summarizes the main properties of the canonical Henselian valuation of $F$ which we will need.

\begin{fact}[\cite{MR2183496}, Pg. 106--107]\label{fact:non-triv-canon-Henselian-valuations}
  Suppose that $L|F$ is a finite extension and that $L$ is not separably closed.
  Then the following hold:
  \begin{enumerate}
  \item $L$ admits a non-trivial Henselian valuation ring if and only if the canonical Henselian valuation of $L$ is non-trivial.
  \item Let $\Ocal$ be the canonical Henselian valuation ring of $L$.
    Then $\Ocal \cap F$ is a Henselian valuation ring of $K$ which is contained in $H(F)$.
  \end{enumerate}
\end{fact}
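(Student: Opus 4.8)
The plan is to treat the two assertions separately: the first is immediate from the definition of the canonical Henselian valuation, while the second rests on the principle that this valuation is intrinsic to the field and is therefore fixed by all of its automorphisms. For assertion~(1), note first that since $L$ is not separably closed the trivial valuation lies in $H_1(L)$ but not in $H_2(L)$. If the canonical Henselian valuation $v_L$ of $L$ is nontrivial there is nothing to prove, so suppose $L$ carries a nontrivial Henselian valuation $w$. If the residue field $Lw$ is separably closed then $w \in H_2(L)$, so $H_2(L) \neq \emptyset$ and $v_L$ is its minimal element, which is nontrivial because the trivial valuation is excluded from $H_2(L)$. If $Lw$ is not separably closed then $w \in H_1(L)$, and in either situation --- $v_L$ being the finest element of the chain $H_1(L)$ when $H_2(L) = \emptyset$, or the minimal element of $H_2(L)$, which refines every element of $H_1(L)$, otherwise --- the valuation $v_L$ is at least as fine as $w$, hence nontrivial.

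For assertion~(2), put $v := v_L$, so $\Ocal = \Ocal_v$, and let $v_0$ be the restriction of $v$ to $F$, so $\Ocal_{v_0} = \Ocal \cap F$; that $v_0$ is a valuation of $F$ is standard, and the point is that $v_0$ is Henselian and lies in $H(F)$. The heart of the matter is the case in which $L|F$ is \emph{Galois}, say with group $G$. Since the construction of $v = v_L$ out of the chains $H_1(L)$ and $H_2(L)$ depends only on $L$, the group $G$ fixes $v$; hence each $\sigma \in G$ stabilizes $\Ocal$ and $\mfrak_v$ and induces an automorphism of the residue field $Lv$ fixing the subfield $Fv_0 = \Ocal_{v_0}/\mfrak_{v_0}$. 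To see that $(F,v_0)$ is Henselian, let $f \in \Ocal_{v_0}[x]$ be monic with a simple root $\bar\alpha \in Fv_0$ of its reduction; regarded over $Lv$ the root $\bar\alpha$ of $\bar f$ is still simple, since the relevant derivative is already nonzero over $Fv_0$, so Henselianity of $v$ produces a \emph{unique} $\alpha \in \Ocal$ with $f(\alpha) = 0$ and $\alpha \equiv \bar\alpha \pmod{\mfrak_v}$. For each $\sigma \in G$ one then has $f(\sigma\alpha) = 0$ and $\sigma\alpha \equiv \bar\alpha \pmod{\mfrak_v}$, so $\sigma\alpha = \alpha$ by uniqueness; thus $\alpha \in L^G = F$, i.e.\ $\alpha \in \Ocal_{v_0}$, which is what was required.

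Granting that $v_0$ is Henselian, its membership in $H(F) = H_1(F) \cup \{v_F\}$ comes from a dichotomy on its residue field, using that $Lv \,|\, Fv_0$ is algebraic (as $L|F$ is finite) and that an algebraic extension of a separably closed field is separably closed. If $Fv_0$ is not separably closed then $v_0 \in H_1(F)$ by definition. If $Fv_0$ is separably closed then so is $Lv$, so $v = v_L$ lies in $H_2(L)$ and is its minimal element; for any $w \in H_2(F)$ the unique extension $\tilde w$ of $w$ to $L$ is again Henselian and has separably closed residue field $L\tilde w$ (an algebraic extension of the separably closed $Fw$), so $\tilde w \in H_2(L)$, whence $v$ is coarser than $\tilde w$ and therefore $v_0 = v|_F$ is coarser than $w$; as $v_0$ itself lies in $H_2(F)$, this forces $v_0 = v_F$.

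The reduction of a general finite extension $L|F$ to the Galois case is where the argument becomes genuinely delicate, and is the step I expect to be the main obstacle. Replacing $L$ by the separable closure of $F$ in it is harmless, since over the remaining purely inseparable part valuations extend uniquely and Henselianity is unaffected; one then compares $v = v_L$ with the canonical Henselian valuation of the Galois closure $N|F$, the crux being to check that the latter restricts to $v$ on $L$ (hence to $v_0$ on $F$). This last point --- controlling the canonical Henselian valuation along a non-normal finite extension --- requires an induction on the degree together with the comparability properties of $H_1$ and $H_2$ recalled above, and is precisely the content of \cite{MR2183496}*{\S4.4}. Everything else, as the computations above show, is formal once one knows that the canonical Henselian valuation is invariant under field automorphisms.
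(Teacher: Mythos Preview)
The paper does not prove this fact; it is stated with a bare citation to \cite{MR2183496}, pp.~106--107, and then used as a black box to deduce Fact~\ref{fact:key-valtheory-fact}. Your write-up therefore goes well beyond what the paper offers, and should be judged on its own merits.

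Your argument for~(1) is correct. For~(2) in the Galois case, the idea of exploiting that the canonical Henselian valuation is intrinsic to $L$ and hence fixed by $\Gal(L|F)$, then descending the Hensel lift by uniqueness, is valid and clean; the subsequent dichotomy on the residue field of $v_0$ to place it in $H(F)$ is also correct (implicitly using that the canonical valuation is the \emph{coarsest} member of $H_2$ when the latter is nonempty, so that $v \leq \tilde w$ for every $\tilde w \in H_2(L)$). You are right to isolate the non-normal reduction as the genuinely delicate step: your proposed route through the Galois closure $N$, reducing everything to the identity $v_N|_L = v_L$, is exactly the content of \cite{MR2183496}*{\S4.4}, where in fact the stronger statement $v_L|_F = v_F$ is established directly (making membership in $H(F)$ automatic). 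Since you explicitly defer that step to the reference, your sketch is honest and essentially complete; the purely inseparable reduction you outline is also correct, since restriction along a purely inseparable extension is a bijection on valuations that respects Henselianity and the $H_1$/$H_2$ dichotomy.
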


Our key Fact~\ref{fact:key-valtheory-fact} follows immediately from Fact~\ref{fact:non-triv-canon-Henselian-valuations}.
Indeed, if $F$ has a non-trivial Henselian valuation $v$ then the (unique) prolongation of $v$ to $L$ is again Henselian.
Conversely, by Fact~\ref{fact:non-triv-canon-Henselian-valuations}, if $L$ has a non-trivial Henselian valuation then the canonical Henselian valuation of $L$ is again non-trivial, and the restriction of this valuation to $F$ is again Henselian (and necessarily non-trivial).

\subsection{The Galois action on quasi-divisorial valuations}\label{subsection:galois-action-on-qdvs}

Assume that $K|k$ is defined over a perfect field $k_0$.
Recall this means that there exists a \emph{regular} function field $K_0|k_0$ such that $K = K_0 \cdot k$ where $k = \bar k_0$.
In this case, the canonical restriction maps
\[ \Gal(K^i|K_0^i) \xrightarrow{\cong} \Gal(K|K_0) \xrightarrow{\cong} \Gal(k|k_0) \]
are both isomorphisms.
We will tacitly identify these three Galois groups.

In particular, we see that $\Gal(k|k_0)$ acts on $K$ by automorphisms which restrict to automorphisms of $k$.
Hence, we obtain a canonical action of $\Gal(k|k_0)$ on $\Qcal(K|k)$.
This clearly restricts to an action on the subset $\Dcal(K|k)$, since this is just the subset of all $v \in \Qcal(K|k)$ such that $v|_k$ is trivial.

Furthermore, the canonical action of $\Gal(k|k_0)$ on $\H^*(K,\Lambda(*))$ arises from this action on $K$ (and the cyclotomic twist of the coefficients).
In particular, the Kummer isomorphism
\[ K^\times \hotimes \Lambda \cong \H^1(K,\Lambda(1)) \]
is $\Gal(k|k_0)$ equivariant, where the action is via the identification with $\Gal(K|K_0)$ on the left.
We also obtain a corresponding action of $\Gal(k|k_0)$ on $\Sub(\H^1(K,\Lambda(1)))$.

We record the following further compatibility, which is now obvious given the definition of $\UU_v$ for $v \in \Qcal(K|k)$, since it will be used later.

\begin{fact}\label{fact:qcal-inclusion-galois-equivariant}
  Suppose that $K|k$ is defined over $k_0$.
  The canonical injective map
  \[ \Qcal(K|k) \hookrightarrow \Sub(\H^1(K,\Lambda(1))), \ \ v \mapsto \UU_v \]
  is equivariant with respect to the action of $\Gal(k|k_0)$.
\end{fact}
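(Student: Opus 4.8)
The plan is simply to unwind the definition of $\UU_v$ and invoke the $\Gal(k|k_0)$-equivariance of the Kummer isomorphism recorded in \S\ref{subsection:galois-action-on-qdvs}. Fix $\sigma \in \Gal(k|k_0)$; under the identification $\Gal(k|k_0) = \Gal(K|K_0)$ this is a field automorphism of $K$ restricting to an automorphism of $k$, and for $v \in \Qcal(K|k)$ the valuation $\sigma v$ is the one whose valuation ring is $\sigma(\Ocal_v)$. Since $\sigma$ fixes $k$ setwise and induces isomorphisms $vK/vk \to (\sigma v)K/(\sigma v)k$ and $Kv \to K(\sigma v)$ over $kv \to k(\sigma v)$, it preserves the two defining conditions of a quasi-divisorial valuation together with the minimality clause, so $\sigma v \in \Qcal(K|k)$; this is exactly the action of $\Gal(k|k_0)$ on $\Qcal(K|k)$ from the previous subsection. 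Moreover, at the level of multiplicative groups one plainly has $\sigma(\U_v) = \U_{\sigma v}$.

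Next I would transport this through cohomology. The composite $\U_v \hotimes \Lambda \to K^\times \hotimes \Lambda \to \H^1(K,\Lambda(1))$ defining $\UU_v$ (as the closure of its image) is built from the inclusion $\U_v \hookrightarrow K^\times$ and the Kummer map, both of which intertwine the $\sigma$-action on $K$ with the induced $\sigma$-action on $\H^1(K,\Lambda(1))$; this is precisely the equivariance of the Kummer isomorphism $K^\times \hotimes \Lambda \cong \H^1(K,\Lambda(1))$ noted above. Hence the automorphism of $\H^1(K,\Lambda(1))$ induced by $\sigma$ carries the image of $\U_v \hotimes \Lambda$ onto the image of $\U_{\sigma v} \hotimes \Lambda$, and, the action being continuous and $\Lambda$-linear, carries its closure $\UU_v$ onto $\UU_{\sigma v}$. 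This is the asserted equality $\sigma(\UU_v) = \UU_{\sigma v}$ inside $\Sub(\H^1(K,\Lambda(1)))$.

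There is no real obstacle here: the only point to keep straight is that the $\Gal(k|k_0)$-action on $\Sub(\H^1(K,\Lambda(1)))$ appearing in the statement is, by construction, the one induced from the action on $\H^1(K,\Lambda(1))$ via the Kummer identification with $K^\times \hotimes \Lambda$, exactly as set up in \S\ref{subsection:galois-action-on-qdvs}; granting this, the verification is the routine diagram chase above. (Injectivity of $v \mapsto \UU_v$ is not at issue here, being already Fact~\ref{fact:qdvs-determined-by-UUv}.)
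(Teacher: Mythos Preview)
Your proof is correct and is essentially the same as the paper's own treatment: the paper gives no proof at all, declaring the fact ``obvious given the definition of $\UU_v$,'' and your argument simply spells out this obviousness by tracing $\sigma(\U_v) = \U_{\sigma v}$ through the $\Gal(k|k_0)$-equivariant Kummer map.
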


\subsection{Detecting divisorial valuations}\label{subsection:detect-divisorial-valuations}

We are finally prepared to state and prove our Key Lemma, which distinguishes the divisorial valuations among the quasi-divisorial ones, using the Galois action.

\begin{keylemma}\label{keylemma:galois-invariant-qdvs-are-dvs}
  Assume that $k_0$ is a perfect non-local field and that $K|k$ is defined over $k_0$.
  Then one has
  \[ \Dcal(K|k) = \bigcup_{k_1|k_0} {\Qcal(K|k)}^{\Gal(k|k_1)}\]
  where $k_1$ varies over the finite extensions of $k_0$.
  In particular, a quasi-divisorial valuation $v$ of $K|k$ is divisorial if and only if
  \[ \{ \sigma \in \Gal(k|k_0) \ : \ \sigma(\UU_v) = \UU_v \} \]
  is an open subgroup of $\Gal(k|k_0)$.
\end{keylemma}
\begin{proof}
  The second assertion clearly follows from the first, using Fact~\ref{fact:qcal-inclusion-galois-equivariant}.
  We shall prove the first assertion.
  The inclusion $\subseteq$ is clear.
  Indeed, if $v$ is a divisorial valuation, then it is defined by some regular point of codimension $1$ on some normal model $X$ of $K|k$, which is in turn defined over some finite extension $k_1$ over $k_0$.
  One clearly has, in this case, $\Gal(k|k_1) \subset \Stab_{\Gal(k|k_0)}(v)$.

  For the converse, assume that $v \in \Qcal(K|k)$ is fixed by $\Gal(k|k_1)$.
  Let $w$ denote the restriction of $v$ to $k$, $w_1$ the restriction of $w$ to $k_1$, and $w_0$ the restriction of $w$ to $k_0$.
  For all $\sigma \in \Gal(k|k_1)$, one has
  \[ w = v|_k = (\sigma v)|_k = \sigma(v|_k) = \sigma w. \]
  Hence $\Gal(k|k_1)$ is contained in the decomposition group $\Zcal_{w|w_0}$.
  Namely, $w_1$ is Henselian.

  Since $k_0$ is not real-closed by assumption, it follows that $k_1$ is not separably closed.
  Finally, we note that $w_1$ must be trivial, for otherwise $k_0$ is Henselian with respect to a non-trivial valuation by Fact~\ref{fact:non-triv-canon-Henselian-valuations}.
  In other words, $w$ must be trivial, so that $v$ is divisorial.
\end{proof}

\section{Recovering function fields}\label{section:recovering-function-fields}

In this section we conclude the proof of Theorems~\ref{maintheorem:anabeliantheorem-reconstruct} and~\ref{maintheorem:anabeliantheorem-functorial} by reducing to the main theorem of~\cite{1810.04768}.
As \emph{loc.\ cit.} uses \emph{pro-$\ell$ abelian-by-central} Galois groups as its input, we must first discuss how to recover these Galois groups from Galois cohomology.
This translation is routine and has appeared (in some incarnation) several times before in the context of anabelian geometry; for more on this, see~\cite{MR2885583},~\cite{MR3552242}*{\S8} and/or~\cite{MR3827205}*{\S4} for an analogous context which deals with finite coefficients (which is more technical due to the presence of the Bockstein morphism).
Throughout this section, we restrict our attention to the case where $\Lambda = \Z_\ell$.

\subsection{Translation to Galois groups}\label{subsection:translation-to-galois-groups}

For a pro-$\ell$ group $\Pi$, we write $\Pi^{(n)}$ for the central descending series of $\Pi$.
This series is defined as follows:
\[ \Pi^{(1)} = \Pi, \ \ \Pi^{(n+1)} = \overline{[\Pi,\Pi^{(n)}]}. \]
We put $\Pi^a := \Pi/\Pi^{(2)}$ resp. $\Pi^c := \Pi/\Pi^{(3)}$ and refer to them as the maximal \emph{abelian} resp. \emph{abelian-by-central} quotients of $\Pi$.

We will assume that $\Pi^a$ is torsion-free for the rest of this section.
A \emph{minimal free pro-$\ell$ presentation} of $\Pi$ is a morphism of pro-$\ell$ groups
\[ \pi : S \rightarrow \Pi \]
where $S$ is a free pro-$\ell$ group, such that the induced map
\[ \pi^a : S^a \rightarrow \Pi^a \]
is an isomorphism (this forces $\pi$ to be surjective).
Such minimal free pro-$\ell$ presentations always exist (under our torsion-freeness assumption), and for such a presentation it follows that $S^{(2)}$ contains the kernel of $\pi$.

Our goal is to explicitly describe the kernel of the induced (surjective) map
\[ \pi^c : S^c \twoheadrightarrow \Pi^c \]
using the inflation map in cohomology.

First, note that one has an extension of profinite groups
\[ 1 \rightarrow S^{(2)}/S^{(3)} \rightarrow S/S^{(3)} \rightarrow \Pi^a \rightarrow 1. \]
Given any morphism $S^{(2)}/S^{(3)} \rightarrow \Lambda$, we may push out the above extension to obtain a corresponding extension of $\Pi^a$ by $\Lambda$ considered up-to equivalence of extensions, which we may in turn identify with an element of $\H^2(\Pi^a,\Lambda)$.
To summarize, we obtain a map (which is canonical once $\pi$ is fixed):
\[ \Hom^{\cont}(S^{(2)}/S^{(3)}, \Lambda ) \xrightarrow{d_2} \H^2(\Pi^a,\Lambda). \]
Since $S$ is free, it is easy to see that this map is in fact an \emph{isomorphism}.

\begin{lemma}\label{lemma:cohom-to-galois-reconstruct}
  Assume that $\Pi^a$ and $\Pi^c$ are torsion-free, and let $\pi : S \twoheadrightarrow \Pi$ be a minimal free pro-$\ell$ presentation.
  Let $\Ccal$ denote the kernel of the canonical map $\H^2(\Pi^a,\Lambda) \rightarrow \H^2(\Pi,\Lambda)$ and put
  \[ T = \bigcap_{f \in d_2^{-1}(\Ccal)} \ker(f),\]
  considered as a subgroup of $S^{(2)}/S^{(3)}$.
  Then $T$ is a normal subgroup of $S^c$, and $\pi$ induces an isomorphism $S^c/T \cong \Pi^c$ which lifts the isomorphism $S^a \cong \Pi^a$ induced by $\pi$.
\end{lemma}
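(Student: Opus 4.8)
The plan is to prove directly that $T$ coincides with $\ker(\pi^c)$; this at once gives that $T$ is normal in $S^c$, that $\pi$ induces an isomorphism $S^c/T \cong \Pi^c$, and that this isomorphism lifts $S^a \cong \Pi^a$ (the latter because $T$ is contained in $S^{(2)}/S^{(3)}$ and so maps to $0$ in $S^a$). To set up, write $R := \ker(\pi)$; minimality of the presentation gives $R \subseteq S^{(2)}$, and surjectivity of $\pi$ gives $\pi(S^{(n)}) = \Pi^{(n)}$ for all $n$. Hence $\ker(\pi^c) = (R\cdot S^{(3)})/S^{(3)}$ is a closed submodule $W$ of $V := S^{(2)}/S^{(3)}$, and the quotient $V/W$ is canonically identified with $\Pi^{(2)}/\Pi^{(3)} \subseteq \Pi^c$, which is torsion-free by hypothesis. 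Since $d_2$ is an isomorphism, proving $T = W$ amounts to showing that $d_2^{-1}(\Ccal)$ equals the annihilator $W^{\perp} := \{ f \in \Hom^{\cont}(S^{(2)}/S^{(3)}, \Lambda) : f|_W = 0 \}$, and then invoking biduality: for a closed submodule $W \subseteq V$ with $V/W$ torsion-free one has $\bigcap_{f \in W^\perp} \ker f = W$. The biduality step is routine, and it is precisely the place where torsion-freeness of $\Pi^c$ is used; the real content is the identification $d_2^{-1}(\Ccal) = W^\perp$.

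For $W^\perp \subseteq d_2^{-1}(\Ccal)$: if $f|_W = 0$ then $f$ factors through $V/W \cong \Pi^{(2)}/\Pi^{(3)}$, so the extension $E_f$ of $\Pi^a$ by $\Lambda$ representing $d_2(f)$ is the pushout of $1 \to \Pi^{(2)}/\Pi^{(3)} \to \Pi^c \to \Pi^a \to 1$ along $f$; the tautological homomorphism $\Pi \to \Pi^c \to E_f$ lying over $\Pi^a$ then splits the pullback $E_f \times_{\Pi^a} \Pi$, so the inflation of $d_2(f)$ to $\H^2(\Pi,\Lambda)$ vanishes, i.e.\ $d_2(f) \in \Ccal$. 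Conversely, if $d_2(f) \in \Ccal$ then $E_f \times_{\Pi^a} \Pi$ splits, so there is a homomorphism $\rho : \Pi \to E_f$ lifting $\Pi \to \Pi^a$. Both $\rho \circ \pi$ and the canonical map $S \to S/S^{(3)} \to E_f$ lie over $S \to S^a = \Pi^a$, so (as $\Lambda$ is central in $E_f$) they differ by a homomorphism $c : S \to \Lambda$, which factors through $S^a$ and hence vanishes on $R \subseteq S^{(2)}$. Evaluating at $x \in R$, where $\rho \circ \pi$ is trivial, forces $c(x) = -f(\bar x)$ with $\bar x \in V$ the image of $x$; since $c|_R = 0$ this shows that $f$ vanishes on the image of $R$ in $V$, i.e.\ $f \in W^\perp$. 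Alternatively one may run this through the five-term exact sequence attached to $1 \to \Pi^{(2)} \to \Pi \to \Pi^a \to 1$, identifying $\Ccal$ with the image of the transgression $\Hom^{\cont}(\Pi^{(2)}/\Pi^{(3)},\Lambda) \to \H^2(\Pi^a,\Lambda)$ and that transgression with $d_2$ precomposed with the projection $V \twoheadrightarrow V/W$.

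The step I expect to be the main obstacle is exactly this middle one: reconciling the extension-theoretic definition of $d_2$ (pushout of the tautological central extension of $\Pi^a$ by $S^{(2)}/S^{(3)}$) with the cohomological description $\Ccal = \ker(\H^2(\Pi^a,\Lambda) \to \H^2(\Pi,\Lambda))$, while keeping careful track of which extensions become split after pullback to $\Pi$ and handling the pushout $E_f$ explicitly. Once $d_2^{-1}(\Ccal) = W^\perp$ is established, biduality finishes everything: $T = \bigcap_{f \in W^\perp} \ker f = W = \ker(\pi^c)$, which is precisely the assertion.
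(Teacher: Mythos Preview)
Your proof is correct and follows essentially the same strategy as the paper's, just unpacked in more detail: the paper reduces (via the torsion-freeness/biduality step you also invoke) to showing that $\Ccal$ coincides with the kernel $\Ccal^c$ of $\H^2(\Pi^a,\Lambda) \to \H^2(\Pi^c,\Lambda)$, and proves this by noting that any extension of $\Pi^a$ by $\Lambda$ is pro-$\ell$ two-step nilpotent so that a splitting over $\Pi$ automatically factors through $\Pi^c$. Your direct verification that $d_2^{-1}(\Ccal) = W^\perp$ is exactly this argument made explicit, and your ``alternative'' via the five-term sequence is precisely the paper's phrasing.
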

\begin{proof}
  Using our torsion-freeness assumptions, this boils down to the assertion that the kernel $\Ccal$ of
  \[ \H^2(\Pi^a,\Lambda) \rightarrow \H^2(\Pi,\Lambda) \]
  agrees with the kernel, say $\Ccal^c$, of
  \[ \H^2(\Pi^a,\Lambda) \rightarrow \H^2(\Pi^c,\Lambda). \]
  Clearly, $\Ccal^c$ is contained in $\Ccal$.
  The converse is easy to see by the definition of $\Pi^c$ (being the maximal pro-$\ell$ two-step nilpotent quotient of $\Pi$), along with the fact that any extension of $\Pi^a$ by $\Lambda$ is pro-$\ell$ and two-step nilpotent.
\end{proof}

\begin{remark}\label{remark:construct-from-cohom}
  Lemma~\ref{lemma:cohom-to-galois-reconstruct} can be used to construct a pro-$\ell$ group which is abstractly isomorphic to $\Pi^c$ using the following data:
  \begin{enumerate}
    \item[A.] The pro-$\ell$ group $\Pi^a$.
    \item[B.] the kernel $\Ccal$ of $\H^2(\Pi^a,\Lambda) \rightarrow \H^2(\Pi,\Lambda)$.
  \end{enumerate}
  This is accomplished with the following steps:
  \begin{enumerate}
    \item Choose a minimal free pro-$\ell$ presentation $\pi : S \rightarrow \Pi^a$ and consider the isomorphism
      \[ \Hom^{\cont}(S^{(2)}/S^{(3)}, \Lambda ) \xrightarrow{d_2} \H^2(\Pi^a,\Lambda). \]
    \item Write $T$ for the intersection
      \[ T := \bigcap_{f \in d_2^{-1}(\Ccal)} \ker(f).\]
  \end{enumerate}
  Then Lemma~\ref{lemma:cohom-to-galois-reconstruct} ensures that $T$ is normal in $S^c$ and $S^c/T$ is isomorphic to $\Pi^c$.
\end{remark}

We will also need the following functorial variant of the above fact.

\begin{lemma}\label{lemma:cohom-to-galois-functorial}
  Let $\Pi_1,\Pi_2$ be two profinite groups such that $\Pi_i^a$ and $\Pi_i^c$ are torsion-free for $i = 1,2$.
  Let $f : \Pi_1^a \rightarrow \Pi_2^a$ be an isomorphism, and let $\Ccal_i$ denote the kernel of
  \[ \H^2(\Pi_i^a,\Lambda) \rightarrow \H^2(\Pi_i,\Lambda). \]
  Then the following are equivalent:
  \begin{enumerate}
  \item The isomorphism $f$ lifts to an isomorphism $\Pi_1^c \cong \Pi_2^c$.
  \item The isomorphism $f^* : \H^2(\Pi_2^a,\Lambda) \cong \H^2(\Pi_1^a,\Lambda)$ restricts to an isomorphism $\Ccal_2 \cong \Ccal_1$.
  \end{enumerate}
\end{lemma}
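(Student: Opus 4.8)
The plan is to reduce everything to the explicit recipe of Remark~\ref{remark:construct-from-cohom}, together with the identity $\Ccal_i = \ker\!\bigl(\H^2(\Pi_i^a,\Lambda)\to\H^2(\Pi_i^c,\Lambda)\bigr)$, which is exactly what is proved inside Lemma~\ref{lemma:cohom-to-galois-reconstruct}. First I would exploit the freedom in choosing presentations: fix a minimal free pro-$\ell$ presentation $\pi_1\colon S\to\Pi_1^a$; since $f$ is an isomorphism, $\pi_2:=f\circ\pi_1\colon S\to\Pi_2^a$ is again a minimal free presentation, and I would use this \emph{common} $S$ to run the construction of both $\Pi_1^c$ and $\Pi_2^c$. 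The two transgression isomorphisms $d_2^{(i)}\colon\Hom^{\cont}(S^{(2)}/S^{(3)},\Lambda)\xrightarrow{\sim}\H^2(\Pi_i^a,\Lambda)$ then differ only by $f^*$: both arise from the single extension $1\to S^{(2)}/S^{(3)}\to S/S^{(3)}\to S^a\to 1$ of $S^a$, with $S^a$ identified to $\Pi_1^a$ via $\pi_1^a$ in the first case and to $\Pi_2^a$ via $f\circ\pi_1^a$ in the second, so the extension of $\Pi_1^a$ computing $d_2^{(1)}$ is the $f$-pullback of the one computing $d_2^{(2)}$; hence $d_2^{(1)}=f^*\circ d_2^{(2)}$.

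For the implication $(2)\Rightarrow(1)$, assume $f^*(\Ccal_2)=\Ccal_1$. Then, since $f^*$ is an isomorphism, $(f^*)^{-1}(\Ccal_1)=\Ccal_2$, and combined with $d_2^{(1)}=f^*\circ d_2^{(2)}$ this yields $(d_2^{(1)})^{-1}(\Ccal_1)=(d_2^{(2)})^{-1}(\Ccal_2)$. Consequently the two subgroups $T_1,T_2\subseteq S^{(2)}/S^{(3)}$ produced by Remark~\ref{remark:construct-from-cohom} coincide, so $S^c/T_1=S^c/T_2$. The two isomorphisms $S^c/T_1\cong\Pi_1^c$ and $S^c/T_2\cong\Pi_2^c$ furnished by Lemma~\ref{lemma:cohom-to-galois-reconstruct} then compose to an isomorphism $\Pi_1^c\cong\Pi_2^c$; since the first lifts $\pi_1^a$ and the second lifts $\pi_2^a=f\circ\pi_1^a$, the composite lifts $\pi_2^a\circ(\pi_1^a)^{-1}=f$, which is $(1)$.

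For $(1)\Rightarrow(2)$, a lift $F\colon\Pi_1^c\cong\Pi_2^c$ of $f$ fits into a commutative square with the two canonical projections $\Pi_i^c\twoheadrightarrow\Pi_i^a$. Applying $\H^2(-,\Lambda)$ turns this into a commutative square whose horizontal arrows $f^*$ and $F^*$ are isomorphisms and whose vertical arrows are the inflation maps $\H^2(\Pi_i^a,\Lambda)\to\H^2(\Pi_i^c,\Lambda)$. Using that $\Ccal_i$ is the kernel of the $i$-th inflation map (by the proof of Lemma~\ref{lemma:cohom-to-galois-reconstruct}), a diagram chase with the injectivity of $F^*$ shows $f^*$ carries $\Ccal_2$ isomorphically onto $\Ccal_1$, giving $(2)$.

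The steps I would suppress as routine are the verification that $f\circ\pi_1$ is still a minimal free presentation and the standard compatibilities between push-out/pull-back of central extensions and inflation. The one point that genuinely requires care — and is, I expect, the main place to slip up — is the bookkeeping of variance: getting the identity $d_2^{(1)}=f^*\circ d_2^{(2)}$ with $f^*$ on the correct side, and, relatedly, checking that the composite isomorphism in $(2)\Rightarrow(1)$ lifts $f$ itself and not some other identification of $\Pi_1^a$ with $\Pi_2^a$.
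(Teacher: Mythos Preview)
Your proposal is correct, and in fact the paper does not supply a proof for this lemma at all --- it is stated and immediately followed by a remark, presumably because the author views it as a routine functorial reformulation of Lemma~\ref{lemma:cohom-to-galois-reconstruct} and Remark~\ref{remark:construct-from-cohom}. Your argument is exactly the natural way to make this precise: the key ingredients are (a) the identity $\Ccal_i = \ker\bigl(\H^2(\Pi_i^a,\Lambda)\to\H^2(\Pi_i^c,\Lambda)\bigr)$ established inside the proof of Lemma~\ref{lemma:cohom-to-galois-reconstruct}, and (b) the use of a \emph{single} free pro-$\ell$ group $S$ presenting both $\Pi_1^a$ and $\Pi_2^a$ via $\pi_1$ and $f\circ\pi_1$, so that the two subgroups $T_1,T_2\subset S^{(2)}/S^{(3)}$ live in the same ambient group and can be directly compared. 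Your self-flagged caveat about variance is well taken but your bookkeeping is fine: the identity $d_2^{(1)}=f^*\circ d_2^{(2)}$ is correct, and in the $(1)\Rightarrow(2)$ direction the diagram chase really does require $F^*$ to be an isomorphism (not just injective) to get both inclusions $f^*(\Ccal_2)\subseteq\Ccal_1$ and $\Ccal_1\subseteq f^*(\Ccal_2)$, which you have since $F$ is an isomorphism.
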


\begin{remark}
  Let $F$ be any field of characteristic $\neq \ell$ which contains $\mu_{\ell^\infty}$, and let $\Pi_F$ denote its maximal pro-$\ell$ Galois group.
  Then both $\Pi_F^a$ and $\Pi_F^c$ are torsion-free.
  Indeed, Kummer theory provides an identification of $\Pi_F^a$ with $\Hom(F^\times,\Z_\ell(1))$ which is torsion-free since $\Z_\ell(1) \cong \Z_\ell$, while the kernel of $\Pi_F^c \rightarrow \Pi_F^a$ can be identified with
  \[ {\Hom(L^\times,\Z_\ell(1))}^{\Pi_F^a}\]
  where $L$ is the Galois extension of $F$ such that $\Gal(L|F) = \Pi_K^a$.
\end{remark}

\subsection{Divisorial Inertia Elements}\label{subsection:divisorial-inertia-elements}

We now review the result of {\sc Pop}~\cite{1810.04768} alluded to above.
Let $K$ be a function field over an algebraically closed field $k$ of characteristic $\neq \ell$, and let $\Pi_K$ denote the maximal pro-$\ell$ Galois group of $K$.
An element $\gamma$ of $\Pi_K^a$ is called a \emph{divisorial inertia element} provided that there exists some \emph{divisorial valuation} $v$ of $K|k$ such that $\gamma$ is contained in the inertia subgroup of $\Pi_K^a$ associated to $v$ (since $\Pi_K^a$ is abelian, the inertia group does not depend on a choice of prolongation of $v$).
The collection of all divisorial inertia elements of $\Pi_K^a$ will be denoted by $\Idiv_K$.

Now suppose that $L$ is another function field over an algebraically closed field $l$.
We will write
\[ \Isom^i(K,L) := \Isom(K^i,L^i) \]
and ${\Isom^i(K,L)}_{/\langle \Frob \rangle}$ for the quotient of $\Isom^i(K,L)$ by the action of $\Frob_{L^i}$ (acting by composition).

Consider $\Isom(\Pi_L^a,\Pi_K^a)$ and note that there is a canonical action of $\Z_\ell^\times$ on it by left multiplication.
We write ${\Isom(\Pi_L^a,\Pi_K^a)}_{/\Z_\ell^\times}$ for the quotient by this action.
Note that any element of ${\Isom(\Pi_L^a,\Pi_K^a)}_{/\Z_\ell^\times}$ with representative $\phi : \Pi_L^a \cong \Pi_K^a$, induces a canonical bijection
\[ \phi_{/\Z_\ell^\times} : {(\Pi_L^a)}_{/\Z_\ell^\times} \cong {(\Pi_K^a)}_{/\Z_\ell^\times}\]
which is independent of the choice of representative $\phi$.

We will write ${\Isom^c(\Pi_L^a,\Pi_K^a)}_{/\Z_\ell^\times}$ for the subset of ${\Isom(\Pi_L^a,\Pi_K^a)}_{/\Z_\ell^\times}$ consisting of those elements which are represented by an isomorphism $\phi : \Pi_L^a \cong \Pi_K^a$ such that $\phi$ lifts to an isomorphism of pro-$\ell$ groups $\Pi_L^c \cong \Pi_K^c$.
Similarly, we will write ${\Isom_{\divv}(\Pi_L^a,\Pi_K^a)}_{/\Z_\ell^\times}$ for the subset of ${\Isom(\Pi_L^a,\Pi_K^a)}_{/\Z_\ell^\times}$ consisting of elements which are represented by an isomorphism $\phi : \Pi_L^a \cong \Pi_K^a$ which induces a bijection $\Idiv_L \cong \Idiv_K$.
Finally, we will write ${\Isom^c_{\divv}(\Pi_L^a,\Pi_K^a)}_{/\Z_\ell^\times}$ for the intersection of ${\Isom_{\divv}(\Pi_L^a,\Pi_K^a)}_{/\Z_\ell^\times}$ and ${\Isom^c(\Pi_L^a,\Pi_K^a)}_{/\Z_\ell^\times}$, both considered as subsets of ${\Isom(\Pi_L^a,\Pi_K^a)}_{/\Z_\ell^\times}$.

Note that any isomorphism $\phi : K^i \cong L^i$ of fields induces an isomorphism $\Pi_L^a \cong \Pi_K^a$.
In other words, we obtain a canonical map
\[ \Isom^i(K,L)  \rightarrow {\Isom(\Pi_L^a,\Pi_K^a)}_{/\Z_\ell^\times}. \]
which factors through ${\Isom^i(K,L)}_{/\langle \Frob \rangle}$.
Clearly, the image of this map lands in the subset
\[ {\Isom^c_{\divv}(\Pi_L^a,\Pi_K^a)}_{/\Z_\ell^\times}. \]
The main result of \emph{loc.\ cit.} can now be summarized in the following theorem.
\begin{theorem}[{\sc Pop}~\cite{1810.04768}]\label{theorem:Pop-recover-functionfields-from-divisorial-inertia}
  Suppose that $K$ is a function field over an algebraically closed field $k$ of characteristic $\neq \ell$, and that $\trdeg(K|k) \geq 3$.
  Then there is a group-theoretical recipe which reconstructs $K^i|k$ (uniquely up-to Frobenius twists) from the following data:
  \begin{enumerate}
    \item $\Pi_K^c$ considered as a pro-$\ell$ group.
    \item The subset $\Idiv_K$ of divisorial inertia elements in $\Pi_K^a$.
  \end{enumerate}
  This recipe is functorial with respect to isomorphisms in the following sense.
  Suppose that $L|l$ is another function field over another algebraically closed field.
  Then the canonical map
  \[ {\Isom^i(K,L)}_{/\langle \Frob \rangle} \rightarrow {\Isom^c_{\divv}(\Pi^a_L,\Pi_K^a)}_{/\Z_\ell^\times} \]
  is a bijection.
\end{theorem}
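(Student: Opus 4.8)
The plan is to deduce this directly from the main theorem of~\cite{1810.04768}: the statement above is a repackaging of \emph{loc.\ cit.} in the language of this note, so the work consists of setting up the dictionary between Pop's data and the pair $(\Pi_K^c,\Idiv_K)$ appearing here. First I would recall Pop's precise setup --- he works with the maximal pro-$\ell$ abelian-by-central Galois group of $K$ together with the distinguished family of inertia/decomposition subgroups attached to the prime divisors of $K|k$ --- and observe that this family is both determined by and determines the subset $\Idiv_K\subseteq\Pi_K^a$ together with $\Pi_K^c$, via the local theory reviewed in~\S\ref{section:review-local-theory}: each divisorial inertia group is procyclic (as $vK/vk\cong\Z$), hence recovered as the closed subgroup generated by any of its nontrivial elements, while the partition of $\Idiv_K$ into individual inertia groups and the associated decomposition subgroups inside $\Pi_K^c$ are reconstructed from the cup-product/alternating-pair formalism exactly as in Corollary~\ref{corollary:fundamental-theorem-alternating-sequences} and Theorem~\ref{theorem:qd-local-theory}.

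Granting the dictionary, the reconstruction half is then immediate: Pop's recipe applied to $(\Pi_K^c,\Idiv_K)$ returns $K^i$ together with the constant field $k$ and the inclusion $k\hookrightarrow K^i$, canonically up to a power of the Frobenius of $K^i$. I would then account for the two indeterminacies. The Frobenius ambiguity is vacuous when $\Char K=0$ and is otherwise intrinsic to any purely group-theoretic reconstruction of a field of positive characteristic. The $\Z_\ell^\times$-ambiguity on the Galois side comes from Kummer theory: since $k$ is algebraically closed it contains $\mu_{\ell^\infty}$, so $\Z_\ell(1)\cong\Z_\ell$ but only after choosing a generator of $\Z_\ell(1)$, i.e.\ up to $\Z_\ell^\times$; rescaling this identification rescales the induced isomorphism $\Pi_K^a\cong\Hom^{\cont}(K^\times,\Z_\ell)$ and hence acts on $\Idiv_K$ and on isomorphisms of abelianizations, without changing $K$ as a field. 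This is precisely why one quotients by $\Z_\ell^\times$ on the Galois side and compares with ${\Isom^i(K,L)}_{/\langle\Frob\rangle}$.

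For functoriality, I would first check that the canonical map is well defined and lands in ${\Isom^c_{\divv}(\Pi_L^a,\Pi_K^a)}_{/\Z_\ell^\times}$: an isomorphism $\psi:K^i\cong L^i$ carries the prime divisors of $K|k$ bijectively onto those of $L|l$ (divisoriality is a valuation-theoretic, hence intrinsic, property), so the induced isomorphism $\Pi_L^a\cong\Pi_K^a$ sends $\Idiv_L$ onto $\Idiv_K$, and it manifestly lifts to $\Pi_L^c\cong\Pi_K^c$ since it arises from a field isomorphism; passing to $\langle\Frob\rangle$- and $\Z_\ell^\times$-orbits is forced by the indeterminacies just discussed. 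Injectivity of the resulting map is the assertion that a field isomorphism is pinned down up to Frobenius by the $\Z_\ell^\times$-orbit of the abelianized Galois isomorphism it induces, which follows from the canonicity of Pop's reconstruction; surjectivity --- that every class in ${\Isom^c_{\divv}(\Pi_L^a,\Pi_K^a)}_{/\Z_\ell^\times}$ is geometric --- is exactly the content of~\cite{1810.04768} once the dictionary is in place.

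The only genuine work is that dictionary, and the step I expect to be the main obstacle is matching our bare set $\Idiv_K$ of divisorial inertia elements with whatever auxiliary combinatorial structure Pop's algorithm actually invokes (for instance a prescribed ``geometric set of prime divisors'', its decomposition graph, or an associated collection of ``divisorial-like'' rational quotients), and confirming that all of it is recoverable from $\Pi_K^c$ together with $\Idiv_K$ alone. I expect this verification to be routine but notationally heavy, and it is at this point that the hypothesis $\trdeg(K|k)\geq 3$ enters, exactly where \emph{loc.\ cit.} requires it.
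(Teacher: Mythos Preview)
The paper does not prove this theorem: it is stated as a summary of the main result of~\cite{1810.04768} and simply cited there, with no argument given. Your proposal is therefore not so much a competing proof as a gloss on why the formulation here is equivalent to Pop's original statement, and in that sense your plan is correct and appropriate---the only content is indeed the dictionary you describe, and the paper itself silently relies on exactly this translation when it invokes the theorem in \S\ref{subsection:proofs-of-main-theorems}.
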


\subsection{Proof of the Main Theorems}\label{subsection:proofs-of-main-theorems}
In this subsection we complete the proofs of Theorems~\ref{maintheorem:anabeliantheorem-reconstruct} and~\ref{maintheorem:anabeliantheorem-functorial}.

\begin{proof}[Proof of Theorem~\ref{maintheorem:anabeliantheorem-reconstruct}]
  We are given the following data:
  \begin{enumerate}
    \item[A.] The profinite group $\Gamma := \Gal(k|k_0)$.
    \item[B.] The $\Z_\ell$-module $\H^1(K,\Z_\ell(1))$ and the canonical action of $\Gamma$ on ${\H^1(K,\Z_\ell(1))}_{/\Z_\ell^\times}$.
    \item[C.] The set $\{ (x,y) \ : \ x,y \in \H^1(K,\Z_\ell(1)), \ x \cup y = 0 \}$.
  \end{enumerate}
  Using this data, it is now a simple matter of putting together the various constructions discussed above to conclude the proof.
  To do this, we follow the following steps:
  \begin{enumerate}
    \item Construct $\Sub(\H^1(K,\Z_\ell(1)))$ using $\H^1(K,\Z_\ell(1))$, as well as the natural action of $\Gamma$ on $\Sub(\H^1(K,\Z_\ell(1)))$ which is determined by the (given) action of $\Gamma$ on ${\H^1(K,\Z_\ell(1))}_{/\Z_\ell^\times}$.
      Also construct the following:
    \begin{enumerate}
      \item $\Gcal(K,\Z_\ell)$, identified as $\Hom^{\cont}_\Lambda(\H^1(K,\Z_\ell(1)),\Z_\ell) \otimes_{\Z_\ell} \Q_\ell$.
      \item The canonical pairing
        \[ \H^1(K,\Z_\ell(1)) \times \Gcal(K,\Z_\ell) \rightarrow \Q_\ell \]
        obtained from our construction of $\Gcal(K,\Z_\ell)$.
      \item The collection of all alternating pairs of elements of $\Gcal(K,\Z_\ell)$, using Fact~\ref{fact:cup-product-gives-alternating-pairs}.
    \end{enumerate}
  \item Parameterize $\Qcal(K|k)$ using Fact~\ref{fact:recover-trdeg} and Theorem~\ref{theorem:qd-local-theory} as the collection of subspaces of $\Gcal(K,\Z_\ell)$ of the form $\Ical_v$, $v \in \Qcal(K|k)$.
    Note that $\UU_v$ is the orthogonal of $\Ical_v$ with respect to the pairing above, so this allows us to construct the image of the injective map
    \[ \Qcal(K|k) \hookrightarrow \Sub(\H^1(K,\Z_\ell(1))), \ v \mapsto \UU_v. \]
  \item Using the $\Gamma$ action on $\Sub(\H^1(K,\Z_\ell(1)))$, construct the image of
    \[ \Dcal(K|k) \hookrightarrow \Sub(\H^1(K,\Z_\ell(1))) \]
    which consists of the elements in the image of
    \[ \Qcal(K|k) \hookrightarrow \Sub(\H^1(K,\Z_\ell(1))) \]
    whose $\Gamma$-stabilizer is open in $\Gamma$.
    This is Key Lemma~\ref{keylemma:galois-invariant-qdvs-are-dvs}.
  \item Construct $\Pi_K^a$ and the canonical action of $\Gamma$ on ${(\Pi_K^a)}_{/\Z_\ell^\times}$.
    Here we identify $\Pi_K^a$ with the $\Z_\ell$-dual of $\H^1(K,\Z_\ell(1))$.
    As we are only concerned with the action of $\Gamma$ on ${(\Pi_K^a)}_{/\Z_\ell^\times}$, the cyclotomic twist in the coefficients is irrelevant.
  \item Identify $\H^2(\Pi_K^a,\Z_\ell)$ with the $\ell$-adic completion of $\wedge^2 \H^1(K,\Z_\ell(1))$ (via (4) above), and use the set $\Ccal := \{(x,y) \ : \ x,y \in \H^1(K,\Z_\ell(1)), \ x \cup y = 0\}$, along with the Merkurjev-Suslin theorem~\cite{MR675529}, to construct the kernel of the inflation map
    \[ \H^2(\Pi_K^a,\Z_\ell) \rightarrow \H^2(\Pi_K,\Z_\ell) \]
    as the closure of the subgroup generated by elements of the form $x \wedge y$ for $(x,y) \in \Ccal$.
    Here we identify $\H^2(\Pi_K,\Z_\ell)$ with its image in $\H^2(K,\Z_\ell)$ via the inflation map (which is injective since $K(\ell)$ is $\ell$-closed), and we identify $\H^i(K,\Z_\ell)$ with $\H^i(K,\Z_\ell(j))$ via an isomorphism of $\Gal_K$-modules $\Z_\ell \cong \Z_\ell(1)$.
  \item Use step (5) along with Lemma~\ref{lemma:cohom-to-galois-reconstruct} to construct $\Pi_K^c$ as an abstract pro-$\ell$ group along with the surjective map $\Pi_K^c \twoheadrightarrow \Pi_K^a$, where $\Pi_K^a$ was constructed as above (see Remark~\ref{remark:construct-from-cohom}).
  \item Consider the Kummer pairing
    \[ \Pi_K^a \times \H^1(K,\Z_\ell(1)) \rightarrow \Z_\ell(1) \cong \Z_\ell. \]
    Given $v \in \Dcal(K|k)$, the orthogonal of $\UU_v$ under this pairing is the inertia group of $v$ in $\Pi_K^a$.
    Hence using (3) we may construct the subset $\Idiv_K$ of divisorial inertia elements inside of $\Pi_K^a$.
    The image of this subset in ${(\Pi_K^a)}_{/\Z_\ell^\times}$ is visibly stable with respect to the action of $\Gamma$.
  \item Using Pop's Theorem (Theorem~\ref{theorem:Pop-recover-functionfields-from-divisorial-inertia}), we may reconstruct $K^i$ uniquely up-to Frobenius twists.
    The $\Gamma$ equivariant nature of the above constructions further provide us with the image of
    \[ \Gamma \cong \Gal(K^i|K_0^i) \rightarrow \Aut(K^i)/\langle \Frob_{K^i} \rangle. \]
    The intersection of $\Gal(K^i|K_0^i)$ with $\langle \Frob_{K^i} \rangle$ in $\Aut(K^i)$ is trivial, hence this map is injective.
    In any case, once we fix a copy of $K^i$ which we reconstruct using the above data (which involves choices) we also obtain the canonical action of $\Gamma$ on this copy of $K^i$.
  \item The maximal algebraically closed subfield of $K^i$ is $k$, hence we obtain $K^i$ and its subfield $k$, while taking $\Gamma$-invariants in $K^i$ resp. $k$ yields $K_0^i$ resp. $k_0$.
    The obvious inclusions among $K^i$, $K_0^i$, $k$ and $k_0$ are visibly determined by this construction.
  \end{enumerate}
  This concludes the proof of Theorem~\ref{maintheorem:anabeliantheorem-reconstruct}.
\end{proof}

Before we proceed to prove Theorem~\ref{maintheorem:anabeliantheorem-functorial}, we introduce some auxiliary notation.
Given $K|k$ and $L|l$ as in Theorem~\ref{maintheorem:anabeliantheorem-functorial}, we write
\[ {\Isom_{\Gal}(\Pi_L^a,\Pi_K^a)}_{/\Z_\ell^\times} \]
for the collection of elements such that the induced bijection
\[ \phi_{/\Z_\ell^\times} : {(\Pi_L^a)}_{/\Z_\ell^\times} \cong {(\Pi_K^a)}_{/\Z_\ell^\times} \]
is Galois-equivariant with respect to \emph{some} isomorphism $\eta : \Gal_l \cong \Gal_k$.
Similarly to before, we write
\[ {\Isom^c_{\Gal}(\Pi_L^a,\Pi_K^a)}_{/\Z_\ell^\times} \]
for the intersection of ${\Isom_{\Gal}(\Pi_L^a,\Pi_K^a)}_{/\Z_\ell^\times}$ with ${\Isom^c(\Pi_L^a,\Pi_K^a)}_{/\Z_\ell^\times}$, both of which are considered as subsets of ${\Isom(\Pi_L^a,\Pi_K^a)}_{/\Z_\ell^\times}$.

\begin{remark}
  If $L$ resp. $K$ has positive transcendence degree over its base field, then the action of $\Gal_{l_0}$ resp. $\Gal_{k_0}$ on ${(\Pi_L^a)}_{/\Z_\ell^\times}$ resp. ${(\Pi_K^a)}_{/\Z_\ell^\times}$ is faithful.
  In particular, for any element of ${\Isom_{\Gal}(\Pi_L^a,\Pi_K^a)}_{/\Z_\ell^\times}$ with a corresponding induced bijection
  \[ \phi_{/\Z_\ell^\times} : {(\Pi_L^a)}_{/\Z_\ell^\times} \cong {(\Pi_K^a)}_{/\Z_\ell^\times}, \]
there exists a \emph{unique} isomorphism $\eta : \Gal_{l_0} \cong \Gal_{k_0}$ with respect to which $\phi_{/\Z_\ell^\times}$ is equivariant.
\end{remark}

\begin{proof}[Proof of Theorem~\ref{maintheorem:anabeliantheorem-functorial}]
  This follows by tracing through the proof of Theorem~\ref{maintheorem:anabeliantheorem-reconstruct}, and noting the functoriality at every step.
  We provide the details below.

  By Pop's Theorem (Theorem~\ref{theorem:Pop-recover-functionfields-from-divisorial-inertia}), the map
  \[{\Isom^i(K,L)}_{/\langle \Frob \rangle} \rightarrow {\Isom^c_{\divv}(\Pi_L^a,\Pi_K^a)}_{/\Z_\ell^\times}.  \]
  is a bijection.

  Consider now the subset ${\Isom^i_{\Gal}(K,L)}_{/\langle \Frob \rangle}$ of ${\Isom^i(K,L)}_{/\langle \Frob \rangle}$ consisting of the elements represented by an isomorphism $K^i \cong L^i$ which is equivariant with respect to some isomorphism $\eta : \Gal_l \cong \Gal_k$ of Galois groups.
  Clearly, the image of ${\Isom^i_{\Gal}(K,L)}_{/\langle \Frob \rangle}$ in ${\Isom(\Pi_L^a,\Pi_K^a)}_{/\Z_\ell^\times}$ lands in ${\Isom_{\Gal}(\Pi_L^a,\Pi_K^a)}_{/\Z_\ell^\times}$.
  Arguing as in steps (3) and (7) from the proof of Theorem~\ref{maintheorem:anabeliantheorem-reconstruct} along with Lemma~\ref{lemma:cohom-to-galois-functorial}, we see that ${\Isom^c_{\Gal}(\Pi_L^a,\Pi_K^a)}_{/\Z_\ell^\times}$ is a subset of ${\Isom^c_{\divv}(\Pi_L^a,\Pi_K^a)}_{/\Z_\ell^\times}$.
  In particular, we obtain a canonical bijection:
  \[ {\Isom^i_{\Gal}(K,L)}_{/\langle \Frob \rangle} \rightarrow {\Isom^c_{\Gal}(\Pi_L^a,\Pi_K^a)}_{/\Z_\ell^\times}. \]

  The assertion of Theorem~\ref{maintheorem:anabeliantheorem-functorial} is a straightforward reformulation of these observations.
  Indeed, any isomorphism $\phi$ as in the assertion of Theorem~\ref{maintheorem:anabeliantheorem-functorial}, considered up-to multiplication by $\Z_\ell^\times$, uniquely induces an element of ${\Isom^c_{\Gal}(\Pi_L^a,\Pi_K^a)}_{/\Z_\ell^\times}$ as in steps (4), (5), (6), (7) from the proof of Theorem~\ref{maintheorem:anabeliantheorem-reconstruct}.
  This in turn corresponds to a unique element of ${\Isom^i_{\Gal}(K,L)}_{/\langle \Frob \rangle}$, which is represented by some Galois-equivariant isomorphism $K^i \cong L^i$.
  By the Galois equivariance, we see that this isomorphism restricts to isomorphisms $K_0^i \cong L_0^i$, $k \cong l$ and $k_0 \cong l_0$.
  The remaining assertions of Theorem~\ref{maintheorem:anabeliantheorem-functorial} follow by tracing through the definitions.
\end{proof}

\section{Generic $\ell$-adic Cohomology}\label{section:generic-elladic-cohomology}

Throughout this section, $k$ will be an algebraically closed field and $K$ will denote a function field over $k$.
The main goal for this section is to present a result comparing the so-called \emph{generic $\ell$-adic cohomology} of $K|k$ to the Galois cohomology of $K$.
This comparison result is certainly very well-known to the experts.
As an application, we use the reconstruction results proved above for Galois cohomology to deduce analogous results for generic cohomology.

\subsection{Basics of generic $\ell$-adic cohomology}
We begin by recalling the definition of generic $\ell$-adic cohomology.
Let $\Lambda$ be a quotient of $\Z_\ell$, and let let $X$ be a \emph{model} of $K|k$, i.e. $X$ is an integral $k$-variety with function field $K$.
We define
\[ \H^i(K|k,\Lambda(j)) := \varinjlim_{U} \H^i(U,\Lambda(j)) \]
where the $U$ varies over the Zariski-open $k$-subvarieties of $X$, and $\H^i(U,\Lambda(j))$ is the usual $\ell$-adic cohomology of $U$ with coefficients in $\Lambda(j)$.
Clearly $\H^i(K|k,\Lambda(j))$ does not depend on the choice of $X$.
It is important to note that, in general, $\H^i(K|k,\Lambda(j))$ has \emph{infinite rank}.

Note that the cup-product in $\ell$-adic cohomology induces a notion of a cup-product
\[ \cup : \H^i(K|k,\Lambda(j)) \otimes_{\Lambda} \H^{i'}(K|k,\Lambda(j')) \rightarrow \H^{i+i'}(K|k,\Lambda(j+j')). \]
This cup-product yields a graded-commutative $\Lambda$-algebra structure on
\[ \H^*(K|k,\Lambda(*)) := \bigoplus_{i \geq 0} \H^i(K|k,\Lambda(i)). \]

This construction is functorial with respect to $k$-embeddings of function fields over $k$.
Indeed, if $\iota : K \hookrightarrow L$ is such a $k$-embedding, then we may choose a model $Y$ of $L|k$ and a model $X$ of $K|k$ such that $\iota$ arises from a dominant morphism $f : Y \rightarrow X$.
We thereby obtain a canonical morphism
\[ \iota_* : \H^i(K|k,\Lambda(j)) = \varinjlim_{U} \H^i(U,\Lambda(j)) \xrightarrow{f^*} \varinjlim_U \H^i(f^{-1}(U),\Lambda(j)) \xrightarrow{\text{canon.}} \H^i(L|k,\Lambda(j)). \]
Again, this morphism only depends on the field embedding $\iota$, and it is compatible with cup-products.

Suppose now that $K|k$ is defined over a perfect field $k_0$ by $K_0|k_0$.
Since the open $k$-subvarieties of $X$ which are defined over $k_0$ form a cofinal system among all the open $k$-subvarieties of $X$, it follows that $\H^i(K|k,\Lambda(j))$ inherits a canonical continuous action of $\Gal_{k_0}$, which is compatible with cup-products.
If $L|k$ is defined over $k_0$ by $L_0|k_0$ and $\iota : K \hookrightarrow L$ is a $k$-embedding of function fields which is defined over $k_0$ in the sense that there exists a $k_0$-embedding $\iota_0 : K_0 \hookrightarrow L_0$ whose base-change to $k$ is $\iota$, then the corresponding map
\[ \iota_* : \H^i(K|k,\Lambda(j)) \rightarrow \H^i(L|k,\Lambda(j)) \]
is $\Gal_{k_0}$-equivariant.

\subsection{Comparison with Galois cohomology}\label{subsection:generic-comparison}

Recall that for a model $X$ of $K|k$, the canonical map $\Spec K \rightarrow X$ induces a morphism in $\ell$-adic cohomology
\[ \H^i(X,\Lambda(j)) \rightarrow \H^i(K,\Lambda(j)). \]
Here $\H^i(K,\Lambda(j))$ is just the usual Galois cohomology of $K$, as considered above.
Passing to the colimit over the open $k$-subvarieties of $X$, we obtain a canonical comparison morphism
\[ \H^i(K|k,\Lambda(j)) \rightarrow \H^i(K,\Lambda(j)) \]
between the generic cohomology of $K|k$ and the Galois cohomology of $K$.
This map is compatible with cup-products in the obvious sense.
Furthermore, if $K|k$ is defined over a perfect field $k_0$, then this comparison map is $\Gal_{k_0}$-equivariant.

When using finite coefficients, i.e.\ when $\Lambda = \Z/\ell^n$, it is well known that the comparison map above is an isomorphism.
The situation is different when $\Lambda = \Z_\ell$, in which case $\H^i(K|k,\Lambda(j))$ is generally \emph{not $\ell$-adically complete}, unlike $\H^i(K,\Z_\ell(j))$.
In this case, the Galois cohomology of $K$ can be recovered by $\ell$-adic completion.

\begin{proposition}\label{proposition:generic-to-galois-comparison}
  In the above context, the following hold:
  \begin{enumerate}
  \item One has canonical isomorphisms for all $n \geq 0$:
    \[ \H^i(K|k,\Z_\ell(j)) \otimes_{\Z_\ell} \Z/\ell^n \cong \H^i(K|k,\Z/\ell^n(j)) \cong \H^i(K,\Z/\ell^n(j)). \]
  \item The $\Z_\ell$-modules $\H^i(K|k,\Z_\ell(j))$ are torsion-free for all $i \geq 0$ and all $j$.
  \item The canonical comparison map $\H^i(K|k,\Z_\ell(j)) \rightarrow \H^i(K,\Z_\ell(j))$ becomes an isomorphism after $\ell$-adic completion.
  \end{enumerate}
\end{proposition}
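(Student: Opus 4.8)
The plan is to reduce all three assertions to the single statement that the $\Z_\ell$-modules $\H^i(K|k,\Z_\ell(j))$ and $\H^i(K,\Z_\ell(j))$ are torsion-free for every $i$ and $j$. The second isomorphism in~(1), $\H^i(K|k,\Z/\ell^n(j))\cong\H^i(K,\Z/\ell^n(j))$, is the standard continuity of \'etale cohomology for the cofiltered limit of schemes $\Spec K=\varprojlim_U U$ (affine transition maps, $\ell$-power torsion coefficients). Next, for an affine open $U$ of a model of $K|k$, and also for $\Spec K$ itself, one has the usual Bockstein exact sequence
\[ 0 \to \H^i(U,\Z_\ell(j))/\ell^n \to \H^i(U,\Z/\ell^n(j)) \to \H^{i+1}(U,\Z_\ell(j))[\ell^n] \to 0, \]
obtained by applying $\varprojlim_m$ to the cohomology sequences of the short exact sequences $0\to\Z/\ell^{m}(j)\xrightarrow{\ell^n}\Z/\ell^{m+n}(j)\to\Z/\ell^n(j)\to 0$ (the inverse systems involved satisfying Mittag--Leffler --- they consist of finite groups for $U$, and have surjective transition maps for $\Spec K$ by the norm-residue isomorphism, cf.~\cite{MR929536}). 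Passing to the colimit over $U$ and using torsion-freeness of $\H^{i+1}(K|k,\Z_\ell(j))$ kills the right-hand term and yields the first isomorphism in~(1). Similarly, torsion-freeness of $\H^{i+1}(K,\Z_\ell(j))$ gives $\H^i(K,\Z_\ell(j))/\ell^n\cong\H^i(K,\Z/\ell^n(j))$, so that $\H^i(K,\Z_\ell(j))=\varprojlim_n\H^i(K,\Z/\ell^n(j))$ is $\ell$-adically complete; and the comparison map of~(3), being compatible with reduction modulo $\ell^n$, then induces an isomorphism modulo $\ell^n$ for every $n$ (both sides being identified with $\H^i(K,\Z/\ell^n(j))$), hence an isomorphism after $\ell$-adic completion of the source.

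It remains to establish the torsion-freeness, and the argument is the same for both families of groups. For $i=0$ both modules are $\Z_\ell$, so assume $i\geq 1$; since $k$ is algebraically closed all cyclotomic twists may be suppressed. Let $\tau$ be a class killed by $\ell^m$, realized on some affine open $U$ (respectively on $\Spec K$). The Bockstein sequence shows $\tau=\delta(\beta)$ for some $\beta\in\H^{i-1}(U,\Z/\ell^m)$, where $\delta\colon\H^{i-1}(U,\Z/\ell^m)\to\H^i(U,\Z_\ell)$ is the connecting map. By continuity of \'etale cohomology, $\beta$ restricts to a class $\beta_K\in\H^{i-1}(K,\Z/\ell^m)$, which by the norm-residue isomorphism $\K_{i-1}(K)/\ell^m\cong\H^{i-1}(K,\Z/\ell^m)$ is a finite sum of symbols $(a_1)\cup\cdots\cup(a_{i-1})$ with $a_t\in K^\times$. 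Shrink $U$ to a nonempty open $U'$ on which all the $a_t$ are units and on which $\beta$ already coincides with this sum of symbols (again by continuity of \'etale cohomology). Each Kummer class $(a_t)\in\H^1(U',\mu_{\ell^m})$ lifts to $\widetilde{(a_t)}\in\H^1(U',\Z_\ell(1))=\varprojlim_n\H^1(U',\mu_{\ell^n})$, namely the compatible system of Kummer classes of the unit $a_t$; cupping and summing produces $\widetilde\beta\in\H^{i-1}(U',\Z_\ell)$ whose reduction modulo $\ell^m$ is $\beta|_{U'}$, reduction of coefficients being multiplicative. Hence $\tau|_{U'}=\delta(\beta|_{U'})=\delta(\widetilde\beta\bmod\ell^m)=0$, as two consecutive maps in the Bockstein sequence compose to zero. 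Since $U'$ may be taken arbitrarily small, $\H^i(K|k,\Z_\ell)=\varinjlim_U\H^i(U,\Z_\ell)$ is torsion-free; for $\H^i(K,\Z_\ell)$ one runs the identical argument with $U=U'=\Spec K$ throughout, using that $\H^1(K,\Z_\ell(1))=\varprojlim_n K^\times/(K^\times)^{\ell^n}$ so that the Kummer class of $a\in K^\times$ lifts to the image of $a$ in this completion.

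The delicate point is this lifting of a mod-$\ell^m$ symbol class to an integral class over a suitably small open, together with the bookkeeping that keeps everything compatible with the transition maps of the limit $\Spec K=\varprojlim_U U$ and with reduction modulo $\ell^m$. The remaining ingredients --- continuity of \'etale cohomology, finiteness of \'etale cohomology of $U$ with finite coefficients (so that continuous cohomology of $U$ is an honest inverse limit and the Bockstein sequences are available), the norm-residue isomorphism, and multiplicativity of reduction of coefficients --- are all standard, but must be aligned carefully; once this is done the three assertions follow as indicated above.
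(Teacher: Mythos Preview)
Your argument is correct and is essentially the same as the paper's, just organized in the opposite order. Both proofs rest on the same two ingredients: the Bockstein/coefficient exact sequence
\[ 0 \to \H^i(K|k,\Z_\ell(j))\otimes\Z/\ell^n \to \H^i(K,\Z/\ell^n(j)) \to \H^{i+1}(K|k,\Z_\ell(j))[\ell^n] \to 0 \]
and the norm-residue isomorphism, which lets one lift mod-$\ell^n$ classes to integral $\ell$-adic classes via Milnor symbols. The paper argues surjectivity of the left-hand map (by factoring the norm-residue map $\K_i(K)\to\H^i(K,\Z/\ell^n(i))$ through $\H^i(K|k,\Z_\ell(i))$, using $f\colon U\to\Gbb_m$ for the Kummer classes and $\H^2(\Pbb^1_k\smin\{0,1,\infty\},\Z_\ell(2))=0$ for the Steinberg relations) and then reads off torsion-freeness as the vanishing of the right-hand term; you instead prove torsion-freeness directly by writing a torsion class as $\delta(\beta)$, expressing $\beta$ as a sum of symbols on a small enough $U'$, and lifting those symbols integrally to kill $\delta(\beta)$. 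These are the same argument read from the two ends of the short exact sequence. The paper's packaging via $\Gbb_m$ and $\Pbb^1\smin\{0,1,\infty\}$ is a bit slicker (it handles all symbols and relations at once rather than shrinking $U$ for each finite list), but your version makes the role of the Bockstein more explicit and also spells out the torsion-freeness of $\H^i(K,\Z_\ell(j))$, which the paper leaves implicit.
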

\begin{proof}
  Fix a model $X$ of $K|k$.
  As noted above, the canonical map
  \[ \H^i(K|k,\Z/\ell^n(j)) := \varinjlim_U\H^i(U,\Z/\ell^n(j)) \rightarrow \H^i(K,\Z/\ell^n(j)) \]
  is an isomorphism, where $U$ varies over the open $k$-subvarieties of $X$.
  The exact sequence
  \[ 0 \rightarrow \Z_\ell(j) \xrightarrow{\ell^n} \Z_\ell(j) \rightarrow \Z/\ell^n(j) \rightarrow 0\]
  induces a long exact sequence on $\ell$-adic cohomology~\cite{MR559531}*{Ch. V, Lemma 1.11}.
  Passing to the colimit and using the observation above, we obtain exact sequences of the form
  \[ 0 \rightarrow \H^i(K|k,\Z_\ell(j)) \otimes_{\Z_\ell} \Z/\ell^n \rightarrow \H^i(K,\Z/\ell^n(j)) \rightarrow \H^{i+1}(K|k,\Z_\ell(j))[\ell^n] \rightarrow 0.\]

  Let $f \in K^\times$ be given.
  Then for a sufficiently small open $k$-subvariety $U$ of $X$, $f$ arises from a morphism $f : U \rightarrow \Gbb_m$.
  The image of $f$ in $\H^1(K,\Z/\ell^n(1))$ via the Kummer map agrees with the image of $1 \in \Z_\ell$ under the canonical map
  \[ \Z_\ell = \H^1(\Gbb_m,\Z_\ell(1)) \xrightarrow{f^*} \H^1(U,\Z_\ell(1)) \rightarrow \H^1(K|k,\Z_\ell(1)) \otimes \Z/\ell^n \rightarrow \H^1(K,\Z/\ell^n(1)). \]
  Since $\H^2(\Pbb_k^1 \smin \{0,1,\infty\},\Z_\ell(2)) = 0$, it follows that the norm residue morphism
  \[ \K_i(K) \rightarrow \H^i(K,\Z/\ell^n(i)) \]
  factors through $\H^i(K|k,\Z_\ell(i))$ and hence through $\H^i(K|k,\Z_\ell(i)) \otimes \Z/\ell^n$.
  The Voevodsky-Rost theorem~\cites{MR2811603,MR2597737,Rost1998} then implies that the map
  \[ \H^i(K|k,\Z_\ell(i)) \otimes \Z/\ell^n \rightarrow \H^i(K,\Z/\ell^n(i)) \]
  is surjective, and hence an \emph{isomorphism} by the exactness of the sequence mentioned above.
  The same is then true for
  \[ \H^i(K|k,\Z_\ell(j)) \otimes \Z/\ell^n \rightarrow \H^i(K,\Z/\ell^n(j)) \]
  since $\mu_{\ell^\infty} \subset k$.
  This proves assertion (1) of the proposition, while assertions (2) and (3) follow easily from (1) and the observations made above.
\end{proof}

\subsection{Recovering function fields from their generic $\ell$-adic cohomology}\label{subsection:recovering-from-generic-elladic-cohomology}

Suppose that $K|k$ is defined over a perfect field $k_0$, and that we are given the following data:
\begin{enumerate}
  \item[A.] The profinite group $\Gamma := \Gal(k|k_0)$.
  \item[B.] $\H^1(K|k,\Z_\ell(1))$ and the canonical action of $\Gamma$ on ${\H^1(K|k,\Z_\ell(1))}_{/\Z_\ell^\times}$.
  \item[C.] The set $\Ccal = \{(x,y) \ : \ x,y \in \H^1(K|k,\Z_\ell(1)), \ x \cup y = 0 \in \H^2(K|k,\Z_\ell(2))\}$.
\end{enumerate}
By Proposition~\ref{proposition:generic-to-galois-comparison}, the $\ell$-adic completion of $\H^1(K|k,\Z_\ell(1))$ is canonically isomorphic to the Galois cohomology group $\H^1(K,\Z_\ell(1))$.
Consider the image of $\Ccal$ in ${\H^1(K,\Z_\ell(1))}^2$, denoted by $\Ccal_1$, as well as the set $\Ccal_2$ defined as the closure of the submodule of
\[ \H^1(K,\Z_\ell(1))\hotimes\H^1(K,\Z_\ell(1))  \]
generated by elements of the form $x \otimes y$ for $(x,y) \in \Ccal_1$.
As the Kummer map $K^\times \rightarrow \H^1(K,\Z_\ell(1))$ factors through $\H^1(K|k,\Z_\ell(1))$ (see the proof of Proposition~\ref{proposition:generic-to-galois-comparison}), the Merkurjev-Suslin theorem shows that, for $x,y \in \H^1(K,\Z_\ell(1))$, one has $x \cup y = 0$ if and only if $x \otimes y \in \Ccal_2$.

We therefore see how the data above completely determines (in a visibly functorial way) the given data that appears in Theorem~\ref{maintheorem:anabeliantheorem-reconstruct} resp.~\ref{maintheorem:anabeliantheorem-functorial}.
We thus obtain the following two theorems as corollaries to Theorems~\ref{maintheorem:anabeliantheorem-reconstruct} resp.~\ref{maintheorem:anabeliantheorem-functorial}.

\begin{maintheorem}\label{maintheorem:generic-cohomology-recover}
  Suppose that $k_0$ is a perfect field of characteristic $\neq \ell$ which is not real-closed nor Henselian with respect to any non-trivial valuation.
  Let $K_0$ be a regular function field of transcendence degree $\geq 3$ over $k_0$, and let $\ell$ be a prime which is different from the characteristic of $k_0$.
  Let $k$ denote the algebraic closure of $k_0$ and put $K := K_0 \cdot k$.
  Then the fields $K^i$, $K_0^i$, $k$ and $k_0$, as well as the obvious inclusions among them, are can be reconstructed (uniquely up-to Frobenius twists) from the following data:
  \begin{enumerate}
  \item The absolute Galois group $\Gal(k|k_0)$ of $k_0$, considered as a profinite group.
  \item The $\Z_\ell$-module $\H^1(K|k,\Z_\ell(1))$, and the action of $\Gal(k|k_0)$ on the set ${\H^1(K,\Z_\ell(1))}_{/\Z_\ell^\times}$.
  \item The subset $\{(x,y) \ : \ x,y \in \H^1(K|k,\Z_\ell(1)), \ x \cup y = 0 \}$ of ${\H^1(K|k,\Z_\ell(1))}^2$.
  \end{enumerate}
\end{maintheorem}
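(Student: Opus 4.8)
The plan is to deduce Theorem~\ref{maintheorem:generic-cohomology-recover} directly from Theorem~\ref{maintheorem:anabeliantheorem-reconstruct} by showing that the generic-cohomology data (A), (B), (C) above determines, in an explicit and functorial way, the Galois-cohomology data (1), (2), (3) appearing in Theorem~\ref{maintheorem:anabeliantheorem-reconstruct}. The only non-formal inputs are Proposition~\ref{proposition:generic-to-galois-comparison} and the Merkurjev--Suslin theorem~\cite{MR675529}; everything else is bookkeeping.

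First, I would use Proposition~\ref{proposition:generic-to-galois-comparison}(3) to identify $\H^1(K,\Z_\ell(1))$ with the $\ell$-adic completion of $\H^1(K|k,\Z_\ell(1))$. Since this identification is canonical and, by the discussion in \S\ref{subsection:generic-comparison}, equivariant for the action of $\Gamma := \Gal(k|k_0)$, the $\Z_\ell$-module $\H^1(K,\Z_\ell(1))$ together with the $\Gamma$-action on ${\H^1(K,\Z_\ell(1))}_{/\Z_\ell^\times}$ is reconstructed from data (A) and (B). It remains to reconstruct the vanishing-cup-product set of Galois cohomology from data (C).

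For this, let $\Ccal_1$ be the image of $\Ccal$ in ${\H^1(K,\Z_\ell(1))}^2$ under the completion map, and let $\Ccal_2$ be the closure of the $\Z_\ell$-submodule of $\H^1(K,\Z_\ell(1)) \hotimes \H^1(K,\Z_\ell(1))$ generated by the elements $x \otimes y$ with $(x,y) \in \Ccal_1$. The key point is that the Kummer map $K^\times \to \H^1(K,\Z_\ell(1))$ factors through $\H^1(K|k,\Z_\ell(1))$ — established inside the proof of Proposition~\ref{proposition:generic-to-galois-comparison} — so that every Steinberg relation $\{a,1-a\}$ already yields a vanishing cup product in generic cohomology, i.e.\ its class lies in $\Ccal$. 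By Merkurjev--Suslin, the kernel of the norm-residue map $\K_2(K) \hotimes \Z_\ell \to \H^2(K,\Z_\ell(2))$ is (topologically) generated by such symbols; combined with the preceding remark and the fact that $\H^1(K,\Z_\ell(1))$ is $\ell$-adically complete, this gives: for $x,y \in \H^1(K,\Z_\ell(1))$ one has $x \cup y = 0$ in $\H^2(K,\Z_\ell(2))$ if and only if $x \otimes y \in \Ccal_2$. Hence the set $\{(x,y) : x \cup y = 0\} \subset {\H^1(K,\Z_\ell(1))}^2$ is determined by $\Ccal$.

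Assembling these observations, data (A), (B), (C) of Theorem~\ref{maintheorem:generic-cohomology-recover} produce precisely data (1), (2), (3) of Theorem~\ref{maintheorem:anabeliantheorem-reconstruct}, and all the constructions above are visibly algorithmic and functorial; applying Theorem~\ref{maintheorem:anabeliantheorem-reconstruct} then recovers $K^i$, $K_0^i$, $k$, $k_0$ and their inclusions, uniquely up to Frobenius twists. The one genuinely delicate point is the passage in the previous paragraph from vanishing of $x \cup y$ in \emph{generic} $\H^2$ to vanishing in \emph{Galois} $\H^2$: the degree-$2$ comparison map need not be injective, so one cannot simply read off the relation, and one must instead exploit that the norm-residue map factors through generic cohomology together with Merkurjev--Suslin to see that the Steinberg relations present generically already generate the full kernel. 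Everything else is routine.
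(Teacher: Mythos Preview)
Your proposal is correct and follows essentially the same route as the paper's own argument in \S\ref{subsection:recovering-from-generic-elladic-cohomology}: complete $\H^1(K|k,\Z_\ell(1))$ via Proposition~\ref{proposition:generic-to-galois-comparison}, push $\Ccal$ forward to $\Ccal_1$ and then to the closed submodule $\Ccal_2$, and invoke Merkurjev--Suslin together with the factoring of the Kummer map through generic cohomology to identify the Galois vanishing-cup set; then apply Theorem~\ref{maintheorem:anabeliantheorem-reconstruct}. Your explicit remark about why one cannot simply transport vanishing along the degree-$2$ comparison map (and must instead go through the Steinberg relations) is a helpful clarification that the paper leaves implicit.
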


\begin{maintheorem}\label{maintheorem:generic-cohomology-functorial}
  In the context of Theorem~\ref{maintheorem:generic-cohomology-recover}, suppose furthermore that $l_0$ is another field which is not real-closed nor Henselian with respect to any non-trivial valuation, and that $L_0|l_0$ is a regular function field of any transcendence degree.
  Let $l$ denote the algebraic closure of $l_0$ and $L := L_0 \cdot l$.
  Suppose that
  \[ \phi : \H^1(K|k,\Z_\ell(1)) \cong \H^1(L|l,\Z_\ell(1)) \]
  is an isomorphism of $\Z_\ell$-modules and $\eta : \Gal(k|k_0) \cong \Gal(l|l_0)$ is an isomorphism of profinite groups, such that the following conditions hold:
  \begin{enumerate}
  \item For all $x,y \in \H^1(K|k,\Z_\ell(1))$, one has $x \cup y = 0$ in $\H^2(K|k,\Z_\ell(2))$ if and only if $\phi(x) \cup \phi(y) = 0$ in $\H^2(L|l,\Z_\ell(2))$.
  \item The induced bijection
    \[ \phi_{/\Z_\ell^\times} : {\H^1(K|k,\Z_\ell(1))}_{/\Z_\ell^\times} \cong {\H^1(L|l,\Z_\ell(1))}_{/\Z_\ell^\times} \]
    is equivariant with respect to the action of $\Gal(k|k_0)$ resp. $\Gal(l|l_0)$ via $\eta$.
  \end{enumerate}
  Then there exists an isomorphism of fields $\psi : K^i \cong L^i$ (unique up-to Frobenius twists) which restricts to an isomorphism $K_0^i \cong L_0^i$, and a unique $\epsilon \in \Z_\ell^\times$, such that $\epsilon \cdot \phi$ is the isomorphism induced by $\psi$, and such that $\eta$ is the isomorphism induced by $\psi$ via the identifications of Galois groups $\Gal(k|k_0) = \Gal(K^i|K_0^i)$ resp. $\Gal(l|l_0) = \Gal(L^i|L_0^i)$.
\end{maintheorem}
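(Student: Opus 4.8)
The plan is to deduce this statement formally from Theorem~\ref{maintheorem:anabeliantheorem-functorial}, transporting all the data through the comparison between generic $\ell$-adic cohomology and Galois cohomology furnished by Proposition~\ref{proposition:generic-to-galois-comparison}. First I would apply $\ell$-adic completion to $\phi$. By Proposition~\ref{proposition:generic-to-galois-comparison}, the comparison map identifies the $\ell$-adic completion of $\H^1(K|k,\Z_\ell(1))$ with the Galois cohomology group $\H^1(K,\Z_\ell(1))$, and similarly for $L$, so $\phi$ induces an isomorphism of complete $\Z_\ell$-modules $\hat\phi : \H^1(K,\Z_\ell(1)) \cong \H^1(L,\Z_\ell(1))$. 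Since the comparison maps are equivariant for the Galois actions, the action of $\Gal(k|k_0)$ on ${\H^1(K,\Z_\ell(1))}_{/\Z_\ell^\times}$ is determined by, and compatible with, the given action on ${\H^1(K|k,\Z_\ell(1))}_{/\Z_\ell^\times}$, and likewise for $L$; hence hypothesis (2) of the present theorem yields hypothesis (2) of Theorem~\ref{maintheorem:anabeliantheorem-functorial} for the pair $(\hat\phi,\eta)$.

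Next I would check that the cup-product hypothesis transfers, which is the content of the discussion immediately preceding the statement. The Kummer map $K^\times \to \H^1(K,\Z_\ell(1))$ factors through $\H^1(K|k,\Z_\ell(1))$ (as in the proof of Proposition~\ref{proposition:generic-to-galois-comparison}), so by the Merkurjev-Suslin theorem~\cite{MR675529}, for $x,y \in \H^1(K,\Z_\ell(1))$ one has $x \cup y = 0$ in $\H^2(K,\Z_\ell(2))$ if and only if $x \otimes y$ lies in the closure of the $\Z_\ell$-submodule of $\H^1(K,\Z_\ell(1)) \hotimes \H^1(K,\Z_\ell(1))$ generated by the images of the pairs $(x',y')$ of elements of $\H^1(K|k,\Z_\ell(1))$ with $x' \cup y' = 0$ in $\H^2(K|k,\Z_\ell(2))$; the same description holds for $L$. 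As $\phi$ is compatible with tensor products and, by hypothesis (1), preserves the vanishing of cup products in generic cohomology, it follows that $\hat\phi$ preserves the vanishing of cup products in Galois cohomology. Thus hypothesis (1) of Theorem~\ref{maintheorem:anabeliantheorem-functorial} holds for $\hat\phi$ as well.

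With both hypotheses verified, Theorem~\ref{maintheorem:anabeliantheorem-functorial} applied to $(\hat\phi,\eta)$ yields an isomorphism of fields $\psi : K^i \cong L^i$, unique up to Frobenius twists, restricting to $K_0^i \cong L_0^i$, together with a unique $\epsilon \in \Z_\ell^\times$ such that $\epsilon\cdot\hat\phi$ is the isomorphism induced by $\psi$ and $\eta$ is the isomorphism induced by $\psi$ on Galois groups. To finish, I would observe that $\psi$, being a field isomorphism carrying $k$ onto $l$ and $k_0$ onto $l_0$, induces an isomorphism on generic cohomology $\H^1(K|k,\Z_\ell(1)) \cong \H^1(L|l,\Z_\ell(1))$ whose $\ell$-adic completion is the isomorphism induced by $\psi$ on Galois cohomology, namely $\epsilon\cdot\hat\phi = \widehat{\epsilon\cdot\phi}$. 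Since the generic cohomology groups inject into their $\ell$-adic completions, this forces $\epsilon\cdot\phi$ itself to be the isomorphism induced by $\psi$, and the remaining assertions follow by tracing through the identifications.

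I do not expect a genuine obstacle here: all of the anabelian content is carried by Theorem~\ref{maintheorem:anabeliantheorem-functorial} (and, through it, by Pop's Theorem~\ref{theorem:Pop-recover-functionfields-from-divisorial-inertia}), while the passage from generic to Galois cohomology is purely formal. The only points that require a small verification are the compatibility of the comparison map with the $\Gal(k|k_0)$-action and with tensor products at the level of generic (rather than completed) cohomology, so that the hypotheses of Theorem~\ref{maintheorem:anabeliantheorem-functorial} are met on the nose, and the injectivity of $\H^1(K|k,\Z_\ell(1)) \to \H^1(K,\Z_\ell(1))$, which underlies the final descent of $\phi$ from its completion.
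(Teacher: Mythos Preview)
Your proposal is correct and follows essentially the same approach as the paper. The paper derives Theorem~\ref{maintheorem:generic-cohomology-functorial} as a direct corollary of Theorem~\ref{maintheorem:anabeliantheorem-functorial} via the discussion in \S\ref{subsection:recovering-from-generic-elladic-cohomology}, which is exactly your reduction through $\ell$-adic completion and the Merkurjev--Suslin argument for transporting the cup-product hypothesis; your write-up in fact supplies more detail than the paper does, particularly the final descent of $\epsilon\cdot\phi$ from its completion (which is justified, since for smooth $U$ the restriction $\H^1(U,\Z_\ell(1)) \to \H^1(K,\Z_\ell(1))$ is injective, so the colimit $\H^1(K|k,\Z_\ell(1))$ injects into $\H^1(K,\Z_\ell(1))$).
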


\section{Concluding remarks}\label{section:concluding-remarks}

Our proof of Theorems~\ref{maintheorem:generic-cohomology-recover} and~\ref{maintheorem:generic-cohomology-functorial} relied entirely on the fact that we used \emph{integral} $\ell$-adic cohomology, so that we may $\ell$-adically complete to recover Galois cohomology using the comparison from Proposition~\ref{proposition:generic-to-galois-comparison}, and, eventually, construct the pro-$\ell$ abelian-by-central Galois group of the function field in question.
On the other hand, while there is a canonical Galois-equivariant comparison morphism
\[ \H^i(K|k,\Q_\ell(j)) \rightarrow \H^i(K,\Q_\ell(j)) \]
there is no apparent way of recovering $\H^i(K,\Q_\ell(j))$ from $\H^i(K|k,\Q_\ell(j))$ alone.

In fact, it is currently unknown whether a higher-dimensional function field $K|k$, which is defined over, say, a finitely-generated field $k_0$, can be recovered from the generic cohomology ring $\H^*(K|k,\Q_\ell(*))$ or the Galois-cohomology ring $\H^*(K,\Q_\ell(*))$ endowed with the action of $\Gal(k|k_0)$.
We expect this to indeed be the case.
The main difficulty in proving such a result seems to be in the \emph{local theory}, as there is currently no known way to detect valuations of $K|k$ when using $\Q_\ell$-coefficients.
On the other hand, there is a local theory when one uses $\Q$-coefficients, and this was the basis of the (rational) Hodge-theoretic variant of Theorem~\ref{maintheorem:generic-cohomology-recover} proven in~\cite{TopazTorelli}.
In more general terms, it is an interesting to ask whether anabelian results can be obtained from cohomology \emph{independently} of the choice coefficients.

\bibliographystyle{amsalpha}
\bibliography{refs.bib}
\end{document}